\newcommand{\pedro}{\ifthenelse{\boolean{pedro}}{\color{blue}
    \setboolean{pedro}{false}}{\color{black}\setboolean{pedro}{true}}}
\newcounter{margin}
\newcommand{\javier}{\ifthenelse{\boolean{javier}}{\color{red}\setboolean{javier}{false}}{\color{black}\setboolean{javier}{true}}}
\theoremstyle{theorem}
\newtheorem{lemma}{Lemma}
\newtheorem{theorem}{Theorem}
\newtheorem{proposition}{Proposition}
\newtheorem{corollary}{Corollary}
\newtheorem*{lemma*}{Lemma}
\newtheorem*{theorem*}{Theorem*}
\newtheorem*{proposition*}{Proposition}
\newtheorem*{corollary*}{Corollary}
\newtheorem*{conjecture*}{Conjecture}
\theoremstyle{definition}
\newtheorem*{definition*}{Definition}
\newtheorem{definition}{Definition}
\newtheorem{remark}{Remark}
\newtheorem*{remark*}{Remark}
\newtheorem*{assumption*}{Assumption}
\newcommand{\ptt}[2]{\ensuremath{\frac{\partial #1}{\partial #2}}}
\newcommand{\pt}[1]{\ensuremath{\frac{\partial}{\partial #1}}}
\newcommand{\hot}{\ensuremath{{h.o.t.}}}
\newcommand{\abs}[1]{\ensuremath{\vert #1 \vert}}
\newcommand{\cc}{\ensuremath{(\mathbb{C}^{2},0)}}
\newcommand{\tsing}{truly singular}
\DeclareMathOperator{\ord}{ord}
\DeclareMathOperator{\tang}{tang}
\DeclareDocumentCommand{\ce}{g}{\ensuremath{\Gamma_{\epsilon  {\IfNoValueF{#1}{,#1}} }}}
\subjclass[2010]{32S05, 32S65, 14H20}
\title[Analytic moduli of plane branches and holomorphic flows]{Analytic moduli of plane branches and holomorphic flows}
\author{P. Fortuny Ayuso}
\email{fortunypedro@uniovi.es}
\address{Dpt. of Mathematics, Univ. of Oviedo, Spain.}
\author{J. Rib\'on}
\email{jribon@id.uff.br}
\address{Dpt. of Analysis, Univ. Federal Fluminense, Brazil.}
\date{\today}
\begin{document}

\begin{abstract}
  We study the behaviour (in the infinitesimal neighbourhood of the singularity)
  of a singular plane branch under the action of holomorphic flows. The
  techniques we develop provide a new elementary, geometric and dynamical
  solution to Zariski's moduli problem for singular branches in
  $({\mathbb C}^{2},0)$.  Furthermore, we study whether elements
  of the same class of analytic conjugacy are conjugated by a holomorphic
  flow; in particular we show that there
  exists an analytic class that is not complete: meaning that there are two
  elements of the class that are not analytically conjugated by a local
  diffeomorphism embedded in a one-parameter flow.
\end{abstract}
\maketitle
\section{Introduction}
The ``moduli problem for plane branches", as posed by Zariski \cite{Zariski4}
and recently solved in an algebraic way \cite{Hefez-Hernandes-classification},
has been found to have a particularly elementary solution when set in the
context of holomorphic flows \cite{PFA-moduli}. In this context, one may ask
whether two analytically equivalent branches are also equivalent under a
holomorphic flow, thus comparing the analytic and the holomorphic-flow
modulis. This leads in a natural way to studying how a plane singular branch
behaves under the action of holomorphic flows, which is the topic of the present
work.


Roughly speaking, in the analytic classification of plane branches, these are
reduced to normal form and two are equivalent if they share the same
normal form \cite{Hefez-Hernandes-classification}.  This normal
form is obtained by making coefficients of the Puiseux parametrization of the
curve equal to $0$ working jet by jet. The coefficients that may be turned into
$0$ are determined by the set of orders of contact of K\"{a}hler differentials
with the curve.  We show that such data is equivalent to
providing the set of orders of tangency of germs of holomorphic vector fields
defined in a neighborhood of $0$ in ${\mathbb C}^{2}$ with the curve (Corollary
\ref{cor:conductor-from-vector-and-form}).  Since vector fields are dynamical
objects that generate one-parameter groups, it is natural to consider
exponentials of germs of vector fields (with a certain order of tangency with
the curve) as normalizing transformations. This is the point of view of the
first author in \cite{PFA-moduli}.  It has been expanded in this work where the
relation between orders of tangency of local vector fields with a curve and the
reduction to normal form of its Puiseux parametrization is made explicit in
Theorem \ref{the:contact-exponent-is-contact-minus-n}.  As a consequence,
replacing the set of orders of contact of K\"{a}hler differentials with the set
of orders of tangency removes the need of interpreting the former set in
dynamical terms.

Let us be more precise.  Consider a singular holomorphic vector
field $X$ (so that $(0,0)$ is an equilibrium point of $X$)
defined in an open neighbourhood $U$ of $(0, 0) \in {\mathbb C}^{2}$ and an
irreducible germ of analytic curve $\Gamma$ contained in the same open set $U$,
say $\Gamma \equiv (f=0)$ for some
$f \in \mathcal {O}_{({\mathbb C}^{2},0)}$.  Let
${ \{ \psi_{s} \}}_{s \in {\mathbb C}}$ be the one-parameter group whose
infinitesimal generator is $X$. Let $\epsilon \in {\mathbb C}$; consider the
curve
\begin{equation*}
  \psi_{- \epsilon} (\Gamma) := \Gamma_{\epsilon} \equiv (f \circ
  \psi_{\epsilon} (x,y) =0) .
\end{equation*}
By expanding $f \circ \psi_{\epsilon}$ as a Taylor power series in the variable
$\epsilon$ we obtain
\begin{equation}
  \label{equ:taylor} \Gamma_{\epsilon} \equiv \left( \sum_{n=0}^{\infty}
    \frac{\epsilon^n}{n!} X^{n}(f) (x,y) =0 \right)
\end{equation}
where $X^{0} (f)= f$ and we define $X^{j+1}(f) = X(X^{j}(f))$ for $j \geq 0$
recursively. We shall call $\{\Gamma_{\epsilon}\}$ the \emph{holomorphic
  deformation of $\Gamma$ by $X$} (or by $\{\psi_\epsilon\}$).  The
coefficient of $x^{i} y^{j}$ for $f \circ \psi_{\epsilon}$ is an entire function
of $\epsilon$ for any $i+j \geq 0$.  Assume for simplicity that the tangent cone
of $\Gamma$ is not $x=0$. The curve $\Gamma_\epsilon$ has a Puiseux
parametrization of the form $(t^{n}, \sum_{j=n}^{\infty} a_j(\epsilon) t^j)$ for
any $\epsilon$ in a small neighborhood of $0$ in ${\mathbb C}$ where $n$ is the
multiplicity of $\Gamma$ at $(0,0)$.  Assume also that $\Gamma$ is not invariant
by $X$ and let $k$ be the first index such that $a_k (\epsilon)$ is not a
constant function. We denote $(X,\Gamma)_{(0,0)}=k$. The combination of
Corollary \ref{cor:conductor-from-vector-and-form} and Theorem
\ref{the:contact-exponent-is-contact-minus-n} implies
\begin{equation}
  \label{equ:tan}
  k = (X,\Gamma)_{(0,0)} = (X(f), f)_{(0,0)} - n - c +1
\end{equation}
where $(X(f), f)_{(0,0)}$ is the intersection multiplicity of
$f$ and $X(f)$, or in other words the tangency order of $X$ with $\Gamma$, and
$c$ is the conductor of $\Gamma$.  As a consequence, the reduction to normal
form depends in a straightforward way on the set of tangency orders of
holomorphic vector fields with the curve $\Gamma$.

 Property (\ref{equ:tan}) is not obvious: a
priori the value of $(X, \Gamma)_{(0,0)}$ could have depended on
other terms of the Taylor power series expansion of $f \circ \psi_{\epsilon}$.
As an example of a situation in which further terms of the Taylor power series
expansion are relevant, consider the intersection multiplicity
$(\Gamma, \Gamma_{\epsilon})_{(0,0)}$.  It is equal to
$\min \{ (X^n (f), f)_{(0,0)} : n \geq 1 \}$ for $\epsilon \in {\mathbb C}^{*}$
in a small neighborhood of $0$ by Equation (\ref{equ:taylor}). The minimum may
be realized for $n >1$ as is the case for $\Gamma = (y^2-x^3=0)$
and $X = x \frac{\partial}{\partial y}$ where $(X(f), f)_{(0,0)} =5$ and
$(X^2(f), f)_{(0,0)} = 4$.

The previous discussion motivates the study of the action of one-parameter
groups on irreducible curves.  Consider an equivalence class ${\mathcal C}$ for
the equivalence relation given by the analytic conjugacy of plane branches.  We
say that two curves $\Gamma_1, \Gamma_2 \in {\mathcal C}$ are {\it connected by
  a geodesic} if they are conjugated by the time $1$ flow $\mathrm{exp} (X)$ of
a germ of holomorphic singular vector field.  We say that ${\mathcal C}$ is {\it
  complete} if given any two curves $\Gamma_1, \Gamma_2 \in {\mathcal C}$ they
are connected by a geodesic.  The term complete is motivated by analogy with the
case of finite dimensional Lie groups $G$ that have a bi-invariant metric where
geodesics are of the form $t \mapsto \mathrm{exp} (tX) \cdot g$ where $X$
belongs to the Lie algebra of $G$, $g \in G$ and $t$ varies in ${\mathbb R}$.
An example of a complete class $\mathcal{C}$ is the class of smooth curves
(Proposition \ref{pro:smooth-is-complete}).

A priori, we could define a notion of formal completeness in which $X$ is a
formal vector field, i.e.  a derivation of ${\mathbb C}[[x,y]]$ that preserves
its maximal ideal.  The definitions are, in fact, equivalent.

\begin{theorem}
  \label{teo:forcomplete} Let ${\mathcal C}$ be a class of analytic conjugacy of
  plane branches. Then ${\mathcal C}$ is complete if and only if ${\mathcal C}$
  is formally complete.
\end{theorem}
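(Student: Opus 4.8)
The plan is to prove the nontrivial implication: formal completeness implies analytic completeness (the reverse is immediate, since a convergent vector field is in particular a formal one). So fix $\Gamma_1,\Gamma_2\in\mathcal C$ and assume there is a formal vector field $X$ with $\mathrm{exp}(X)(\Gamma_1)=\Gamma_2$. I want to produce a \emph{convergent} singular vector field doing the same job.

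First I would reduce to a problem about the finite-dimensional action on jets. Since $\Gamma_1$ and $\Gamma_2$ are analytically conjugate plane branches, there is a convergent diffeomorphism $\phi$ with $\phi(\Gamma_1)=\Gamma_2$; after composing, we may assume $\Gamma_1=\Gamma_2=:\Gamma$ and that $\mathrm{exp}(X)$ is a formal diffeomorphism fixing $\Gamma$. The key structural fact, which underlies the whole circle of ideas in the paper (Equation \eqref{equ:tan} and the surrounding discussion), is that whether a branch can be moved to a given curve in its analytic class by a flow is detected by finitely many jets: the conductor $c$ of $\Gamma$ bounds the relevant order, so that the condition ``$\mathrm{exp}(X)$ fixes $\Gamma$'' and the condition ``$\mathrm{exp}(X)$ realizes a prescribed normalization of the Puiseux parametrization'' depend only on the $c$-jet (or some explicit bound $N=N(c)$) of $X$ and of the parametrization. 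I would make this precise: the set of $N$-jets of singular vector fields $X$ such that $\mathrm{exp}(X)$ sends the $N$-jet of $\Gamma_1$ to that of $\Gamma_2$ is the preimage, under the polynomial map $X\mapsto (\text{$N$-jet of }\mathrm{exp}(X))$ acting on $N$-jets of parametrizations, of a specific point; in particular it is an algebraic subset of the finite-dimensional vector space of $N$-jets of singular vector fields.

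Next I would invoke the formal hypothesis: truncating the formal $X$ to order $N$ gives a point in this algebraic set, so the algebraic set is nonempty; hence it contains a point with \emph{polynomial} (in particular convergent) entries. This produces a polynomial singular vector field $X_0$ whose time-one flow $\mathrm{exp}(X_0)$ agrees with the required transformation up to order $N$. Finally I would upgrade ``agrees up to order $N$'' to ``exactly conjugates $\Gamma_1$ to $\Gamma_2$'': because $N\ge c$ and higher-order modifications of the parametrization past the conductor are freely realizable by convergent changes of coordinate tangent to the identity (this is exactly the jet-by-jet mechanism of the Hefez--Hernandes normal form, recast dynamically via Theorem \ref{the:contact-exponent-is-contact-minus-n}), one can correct the tail of $X_0$ by adding a convergent singular vector field supported in high order without disturbing the $N$-jet, arranging $\mathrm{exp}(X)(\Gamma_1)=\Gamma_2$ on the nose. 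Composing back with $\phi^{-1}$ (itself, if necessary, realized as a flow using Proposition-type arguments, or absorbed since we only need \emph{a} geodesic in $\mathcal C$) gives the geodesic connecting the original $\Gamma_1,\Gamma_2$.

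The main obstacle I anticipate is the finiteness/algebraicity step: pinning down the precise order $N$ beyond which the flow condition becomes insensitive to the higher jets of $X$, and checking that ``fix the $N$-jet of $\Gamma$'' together with ``realize the prescribed normal-form truncation'' really does cut out an algebraic (not merely pro-algebraic) subset on which convergent points exist. This is where Equation \eqref{equ:tan} does the real work: it guarantees that the effect of $X$ on the $k$-th Puiseux coefficient is governed by the tangency order $(X,\Gamma)_{(0,0)}$, so raising that tangency order past $c$ makes $\mathrm{exp}(X)$ act trivially on $\Gamma$ to all relevant orders, which is exactly what lets us both truncate and correct. The rest — passing from a nonempty algebraic set over $\mathbb C$ to a $\mathbb C$-point, and the convergent tail correction — is routine.
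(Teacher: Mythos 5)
Your overall strategy (truncate the formal field, then correct) is in the same family as the paper's, but three of your steps have genuine problems. First, the reduction ``after composing with $\phi$ we may assume $\Gamma_1=\Gamma_2$'' destroys the statement you need to prove: being connected by a \emph{single} geodesic is not invariant under post-composition with an arbitrary analytic conjugation, and at the end ``composing back with $\phi^{-1}$'' produces $\phi\circ\mathrm{exp}(Y_0)$, which is a composition of two maps rather than one exponential --- that only yields the weaker Corollary \ref{cor:2-geodesic}, not completeness. Second, the condition $\mathrm{exp}(X)(\Gamma_1)=\Gamma_2$ is \emph{not} detected by finitely many jets of $X$: two distinct branches can agree to arbitrarily high finite order, so the conductor bounds the data needed for the \emph{normal form}, not for hitting the specific curve $\Gamma_2$. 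Your algebraic set of $N$-jets therefore only encodes ``agrees with $\Gamma_2$ up to order $N$,'' and the entire burden falls on the tail correction. Third, that tail correction --- the actual crux --- is asserted rather than proved: adding a high-order convergent perturbation to $X_0$ so that $\mathrm{exp}$ of the sum hits $\Gamma_2$ exactly is an infinite-dimensional matching problem, and nothing in your sketch controls it or guarantees convergence.

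For comparison, the paper (Theorem \ref{teo:formal-complete}) also truncates $\hat X$ to polynomial fields $X_k$, so that $\Gamma_k:=\mathrm{exp}(X_k)(\Gamma)$ converges to $\Gamma'$ in the Krull topology; but the correction is done by \emph{conjugation}, not by perturbing the vector field. One constructs a convergent $\theta(x,y)=(x,\,y+h(x,y)\gamma(x,y))$, with $h=0$ an equation of $\Gamma$, which automatically fixes $\Gamma$ and which sends $\Gamma'$ to $\Gamma_k$; solvability of the equation $\gamma(t^n,b(t))=(b_k(t)-b(t))/h(t^n,b(t))$ for $k\gg 1$ is exactly where the conductor enters (the right-hand side has order large enough to lie in the image of ${\mathcal O}\to{\mathbb C}\{t\}$). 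Then $\theta^{*}X_k$ is a genuinely convergent vector field with $\mathrm{exp}(\theta^{*}X_k)(\Gamma)=\Gamma'$. This conjugation trick is what your proposal is missing: it converts the intractable ``correct the tail of the flow'' problem into a solvable interpolation problem along the single branch $\Gamma'$. As written, your proof has a gap at each of the three places above and would need the paper's (or an equivalent) correction mechanism to close.
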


The analytic classification \cite{Hefez-Hernandes-classification} relies, as an
intermediate step, in the classification of plane branches modulo unipotent
diffeomorphisms, i.e. germs of biholomorphism $\varphi$ such that the linear
part $D_0 \varphi$ at the origin is a unipotent linear transformation. Since
unipotent diffeomorphisms are always embedded in the one-parameter group of a
formal vector field (cf. Remark \ref{rem:nil-to-unip}), such classes are
complete by Theorem \ref{teo:forcomplete}.  Moreover since any analytic
conjugacy $\varphi$ between curves $\Gamma_1$ and $\Gamma_2$ may be written in
the form $D_0 \varphi \circ \psi$ where
$\psi := (D_0 \varphi)^{-1} \circ \varphi$ has linear part equal to the identity
(see Corollary \ref{cor:equivalence-under-composition-of-flows}), we deduce that
$\Gamma_1$ and $\Gamma_2$ can be connected by two ``segments of geodesic".  More
precisely, there exist germs of singular holomorphic vector fields $X$, $Y$ such
that $(\mathrm{exp}(Y) \circ \mathrm{exp}(X))(\Gamma_1)=\Gamma_2$ (Corollary
\ref{cor:2-geodesic}).  A class of analytic conjugacy ${\mathcal C}$ of a plane
branch $\Gamma$ is identified with the set of left cosets of
$\mathrm{Diff}({\mathbb C}^{2},0)/ \mathrm{Stab} (\Gamma)$ where
$\mathrm{Diff}({\mathbb C}^{2},0)$ is the group of germs of diffeomorphisms
defined in a neighborhood of $0 \in {\mathbb C}^{2}$ and
$\mathrm{Stab} (\Gamma)= \{ \varphi \in \mathrm{Diff}({\mathbb C}^{2},0):
\varphi (\Gamma)=\Gamma\}$ is the stabilizer of $\Gamma$.  It is known that
there exist local biholomorphisms that can not be embedded in the flow of a
formal vector field (see \cite{Zhang-jde-2011} and \cite{ribon-jde-2012}) but to
show that a class is not complete, we need to prove a stronger result, namely
that there exists a left coset $\varphi \circ \mathrm{Stab} (\Gamma)$ in
$\mathrm{Diff}({\mathbb C}^{2},0)/ \mathrm{Stab} (\Gamma)$ such that none of its
elements can be embedded in the flow of a formal vector field.  We will show
that there exist local biholomorphisms $\varphi_0$ such that any
$\varphi \in \mathrm{Diff}({\mathbb C}^{2},0)$ sharing the same second jet as
$\varphi_0$ is not embedded in the flow of a formal vector field.  Then, we
shall prove that there are plane branches $\Gamma$ such that its stabilizer is
small: any element of $\mathrm{Stab} (\Gamma)$ has second jet
equal to the identity map. Combining these two results we obtain that no element
of $\varphi_0 \circ \mathrm{Stab} (\Gamma)$ is embedded in the flow of a formal
vector field. By following the previous ideas we obtain

\begin{proposition}
  \label{pro:noncomplete}
  Let $\Gamma$ be the plane branch with Puiseux parametrization
  $(t^6, t^7 + t^{10} + t^{11})$. Then the class ${\mathcal C}$ of analytic
  conjugacy of $\Gamma$ is non-complete.
\end{proposition}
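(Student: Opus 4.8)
The plan follows the strategy sketched in the introduction, so I will organize the proof around the two independent ingredients and then combine them.

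\medskip

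\noindent\textbf{Plan.} The strategy is to exhibit a specific obstruction by combining two facts: (i) there is a local biholomorphism $\varphi_0$ whose $2$-jet already prevents embedding in the flow of a formal vector field, in the strong sense that \emph{every} $\varphi$ with the same $2$-jet fails to embed; and (ii) the branch $\Gamma=(t^6,t^7+t^{10}+t^{11})$ has a very small stabilizer: every $\varphi\in\mathrm{Stab}(\Gamma)$ has $2$-jet equal to the identity. Granting (i) and (ii), pick $\varphi_0$ as in (i) and set $\Gamma_2:=\varphi_0(\Gamma)$, $\Gamma_1:=\Gamma$; both lie in $\mathcal C$. If some germ of singular holomorphic vector field $X$ satisfied $\exp(X)(\Gamma_1)=\Gamma_2$, then $\varphi_0^{-1}\circ\exp(X)\in\mathrm{Stab}(\Gamma)$, so by (ii) its $2$-jet is the identity, i.e. $\exp(X)$ has the same $2$-jet as $\varphi_0$; but then by (i) $\exp(X)$ is not embedded in a formal flow — a contradiction with $\exp(X)$ being the time-one map of $X$. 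Hence $\Gamma_1$ and $\Gamma_2$ are not connected by a geodesic and $\mathcal C$ is non-complete. (One should also invoke Theorem~\ref{teo:forcomplete} to reduce the analytic statement to the formal one, or equivalently note that the obstruction in (i) is formal.)

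\medskip

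\noindent\textbf{Step 1: the 2-jet obstruction.} I would choose $D_0\varphi_0$ to be a linear map with eigenvalues $\lambda_1,\lambda_2$ forming a resonance, e.g. $\lambda_1=\lambda$, $\lambda_2=\lambda^2$ with $\lambda$ not a root of unity (a Poincaré–Dulac resonant node), and with a nonzero resonant quadratic term in $\varphi_0$. The classical fact (this is exactly the phenomenon behind \cite{Zhang-jde-2011}, \cite{ribon-jde-2012}) is that if $\varphi$ is embedded in the flow $\exp(tX)$ of a formal vector field, then $X$ has a linear part with eigenvalues $\mu_1,\mu_2$, $e^{\mu_i}=\lambda_i$, and the normal-form theory forces the resonant quadratic coefficient of $\varphi=\exp(X)$ to be determined by — in fact proportional to — a corresponding coefficient of $X$; choosing the $2$-jet of $\varphi_0$ outside that one-dimensional family of admissible values makes embedding impossible. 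Crucially this depends only on $D_0\varphi$ and on one quadratic coefficient, so it is a condition on the $2$-jet alone; I would state and prove this as an auxiliary lemma, computing $\exp(X)$ up to order $2$ and comparing coefficients in the resonant monomial. The main obstacle in this step is purely bookkeeping: making sure the chosen resonance is compatible with the eventual requirement that $\varphi_0(\Gamma)$ lie in the same analytic class as $\Gamma$, which it automatically does since $\varphi_0\in\mathrm{Diff}(\mathbb C^2,0)$.

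\medskip

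\noindent\textbf{Step 2: the stabilizer of $\Gamma$ is small.} Here I would use the semicontinuity data of $\Gamma=(t^6,t^7+t^{10}+t^{11})$: its semigroup is $\langle 6,7\rangle$, so the characteristic is $(6,7)$, and the extra Puiseux terms $t^{10}+t^{11}$ are precisely what pins down the conjugacy class and rigidifies the symmetries. Concretely, any $\varphi\in\mathrm{Stab}(\Gamma)$ induces an automorphism of the parametrization, i.e. a change $t\mapsto \rho(t)=\sigma t+\cdots$ with $\sigma^6=1$ together with $\varphi$ acting on $(x,y)$; writing $\varphi(x,y)=(a_{10}x+a_{01}y+\cdots,\ b_{10}x+b_{01}y+\cdots)$ and imposing $\varphi(t^6,t^7+t^{10}+t^{11})=(\rho^6,\rho^7+\rho^{10}+\rho^{11})$, one matches coefficients order by order. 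Since $7\not\equiv 0\pmod 6$ and the semigroup gaps are rigid, the linear part is forced to be diagonal with $a_{10}=\sigma^6=\sigma^{?}$ and $b_{01}$ determined; the monomials $x^iy^j$ with $6i+7j\le 13$ that could contribute to the $2$-jet of $\varphi$ are very few, and matching the coefficients of $t^{12}$, $t^{13}$ (coming from $x^2$, $xy$, and from $\rho^{10},\rho^{11}$) forces $\sigma=1$ and then the quadratic part of $\varphi$ to vanish. I expect \textbf{this to be the main obstacle}: one must carefully enumerate which monomials in $\varphi$ reach orders $\le 13$ in $t$, handle the reparametrization $\rho$ simultaneously, and check that the resulting (small, finite) linear system indeed forces $j^2\varphi=\mathrm{id}$ — a direct but delicate computation. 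Once Steps 1 and 2 are in hand, the combination above finishes the proof. \qed
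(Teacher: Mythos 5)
Your overall architecture is exactly the paper's (it is the strategy announced in the introduction): a $2$-jet that obstructs embedding in any formal flow, plus the fact that $\mathrm{Stab}(\Gamma)$ has trivial $2$-jets, combined via the coset argument. The combination step and the reduction to the formal setting are fine. The genuine gap is in Step 1: the example you propose does not produce an obstruction. With $\lambda_1=\lambda$, $\lambda_2=\lambda^2$ and $\lambda$ not a root of unity, the \emph{only} resonant monomial is $x^2e_2$, and one may choose the logarithms $\mu_1,\mu_2$ of the eigenvalues with $\mu_2=2\mu_1$; for that choice the resonance is \emph{strong}, and a direct computation with $X=\mu_1 x\partial_x+2\mu_1 y\partial_y+bx^2\partial_y$ gives $\exp(X)=(\lambda x,\lambda^2 y+b\lambda^2x^2)$, so \emph{every} value of the resonant quadratic coefficient is attained; combined with Poincar\'e--Dulac normalization, every formal diffeomorphism with this linear part embeds in a formal flow. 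Your stated ``classical fact'' (the resonant coefficient of $\exp(X)$ is proportional to a coefficient of $X$) only yields an obstruction when the proportionality constant vanishes, i.e.\ for a \emph{weak} resonance (one satisfied by the $\lambda_i$ but by no admissible choice of the $\mu_i$). A single resonance relation $\lambda_1^i\lambda_2^j=\lambda_k$ can always be lifted to some branch of the logarithms, so you need a $2$-jet carrying at least two resonant monomials whose relations cannot be lifted \emph{simultaneously}; this is what the paper's examples $(x+x^2+y^2,-y)$ (resonances $x^2e_1$, $y^2e_1$, forcing $2\alpha=\alpha$ and $2\beta=\alpha$, hence $e^\beta=1\neq-1$) and $(e^{2\pi i/3}x+y^2,\,e^{4\pi i/3}y+x^2)$ achieve. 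As written, Step 1 would fail, and the rest of the proof collapses with it.

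Step 2 is essentially correct in outline and matches the paper's conclusion (every element of $\mathrm{Stab}(\Gamma)$ has identity $2$-jet), but note that the paper does not match coefficients of $\varphi$ directly at quadratic order, which is nonlinear in the unknowns; it first forces $D_0\varphi=\mathrm{id}$ by the reparametrization computation, then writes $\varphi=\exp(X)$, shows $X$ itself is tangent to $\Gamma$ (a polynomial-in-$s$ argument), and finally kills the quadratic jet of $X$ from the \emph{linear} system $\varphi^{\ast}\omega\equiv 0$ for the dual form $\omega$ of $X$. If you pursue the direct matching you should expect to reproduce an equivalent $5\times 5$ linear system; the detour through the infinitesimal generator is what makes that system linear and checkable.
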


We can provide a topology in the class ${\mathcal C}$ of a plane branch $\Gamma$
by considering a topology in $\mathrm{Diff}({\mathbb C}^{2},0)$ and the
corresponding quotient topology in the set
$\mathrm{Diff}({\mathbb C}^{2},0)/ \mathrm{Stab} (\Gamma)$.  A natural choice is
the Krull topology (also called ${\mathfrak m}$-adic topology, where
${\mathfrak m}$ is the maximal ideal of ${\mathbb C}[[x,y]]$) where the sets
$S_{k, \varphi}$ of elements of $\mathrm{Diff}({\mathbb C}^{2},0)$ whose $k$-jet
coincides with the $k$-jet of $\varphi$ provide a base of open sets of the
topology by varying $\varphi$ in $\mathrm{Diff}({\mathbb C}^{2},0)$ and $k$ in
${\mathbb N}$.  Proposition \ref{pro:noncomplete} can be reinterpreted as a
genericity property in the class $\mathcal{C}$.

\begin{proposition}
\label{pro:open-krull}
Let ${\mathcal C}$ be the analytic class of the plane branch
$\Gamma$ with Puiseux parametrization $(t^6, t^7 + t^{10} + t^{11})$.  Denote
\begin{equation*}
  {\mathcal C}' = \{ \Gamma' \in {\mathcal C} : \Gamma \ \mathrm{and} \ \Gamma'
  \ \mathrm{are \ connected \ by \ a \ geodesic} \}
\end{equation*}
Then ${\mathcal C} \setminus {\mathcal C}' $ contains an open set of
$\mathcal{C}$ for the Krull topology.  In particular ${\mathcal C}' $ is
not dense in ${\mathcal C}$.
\end{proposition}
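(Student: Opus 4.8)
The plan is to leverage Proposition \ref{pro:noncomplete} together with the analysis of the stabilizer of $\Gamma$ that underlies its proof. Recall from the discussion preceding Proposition \ref{pro:noncomplete} that the argument produces a local biholomorphism $\varphi_0$ such that \emph{any} $\varphi \in \mathrm{Diff}({\mathbb C}^2,0)$ sharing the second jet of $\varphi_0$ cannot be embedded in the flow of a formal vector field, and moreover that $\mathrm{Stab}(\Gamma)$ is small in the sense that every element of it has second jet equal to the identity. I would first record this as the key input: writing $j_2$ for the second-jet map, the coset $\varphi_0 \circ \mathrm{Stab}(\Gamma)$ is entirely contained in $j_2^{-1}(j_2(\varphi_0))$, which is exactly the basic Krull-open set $S_{2,\varphi_0}$.

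Next I would translate ``not connected by a geodesic'' into the group-theoretic language. The curve $\Gamma' = \varphi(\Gamma)$ lies in ${\mathcal C}'$ precisely when some representative $\psi \in \varphi \circ \mathrm{Stab}(\Gamma)$ equals $\mathrm{exp}(X)$ for a germ of singular holomorphic vector field $X$; by Theorem \ref{teo:forcomplete} (or rather the equivalence it rests on) we may equally take $X$ formal. So $\Gamma' \notin {\mathcal C}'$ iff \emph{no} element of the coset $\varphi \circ \mathrm{Stab}(\Gamma)$ is embedded in the flow of a formal vector field. Now if $\varphi \in S_{2,\varphi_0}$, then since every element of $\mathrm{Stab}(\Gamma)$ has trivial second jet, the whole coset $\varphi \circ \mathrm{Stab}(\Gamma)$ again lies in $S_{2,\varphi_0}$, hence consists of maps with the second jet of $\varphi_0$; by the cited property of $\varphi_0$, none of these is embedded in a formal flow. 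Therefore $\varphi(\Gamma) \notin {\mathcal C}'$ for every $\varphi \in S_{2,\varphi_0}$.

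It remains to pass from this statement about diffeomorphisms to a statement about the open set it carves out in ${\mathcal C}$. Under the identification ${\mathcal C} \cong \mathrm{Diff}({\mathbb C}^2,0)/\mathrm{Stab}(\Gamma)$ with the quotient topology, the image of the open saturated set $S_{2,\varphi_0} \cdot \mathrm{Stab}(\Gamma) = S_{2,\varphi_0}$ (it is already saturated, by the previous paragraph) is open in ${\mathcal C}$; call it ${\mathcal U}$. We have shown ${\mathcal U} \subseteq {\mathcal C} \setminus {\mathcal C}'$, and ${\mathcal U}$ is nonempty since $\Gamma$ itself — wait, $\Gamma \in {\mathcal C}'$, so I should instead note ${\mathcal U} = \pi(S_{2,\varphi_0})$ is nonempty because it contains $\varphi_0(\Gamma)$. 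This gives the open set inside ${\mathcal C} \setminus {\mathcal C}'$. Finally, ${\mathcal C}'$ contains $\Gamma$ and a neighborhood of it (the identity lies in the flow of the zero vector field, and small perturbations stay connected by geodesics — or one simply invokes that ${\mathcal C}'$ is nonempty), so ${\mathcal C}$ is not covered by $\overline{{\mathcal C}'}$ precisely because ${\mathcal U}$ is a nonempty open set disjoint from ${\mathcal C}'$; hence ${\mathcal C}'$ is not dense.

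The main obstacle I anticipate is purely bookkeeping: making sure the quotient topology on ${\mathcal C}$ is set up so that ``$S_{2,\varphi_0}$ is open and saturated $\Rightarrow$ its image is open and nonempty'' is airtight, and checking that the two properties extracted from the proof of Proposition \ref{pro:noncomplete} are stated in exactly the form needed (the ``$\varphi_0$ works for its whole second-jet class'' statement, and the ``$\mathrm{Stab}(\Gamma) \subseteq \ker j_2$'' statement). Once those are in hand, the argument is a short formal manipulation and no new geometric content is required beyond Proposition \ref{pro:noncomplete}.
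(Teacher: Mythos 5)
Your argument is correct and is essentially identical to the paper's proof: both take the Krull-basic open set of diffeomorphisms sharing the second jet of the chosen $\varphi_0$, observe it is saturated (a union of left cosets) because $\mathrm{Stab}(\Gamma)$ has trivial second jets, and conclude that its open image in the quotient consists of branches not connected to $\Gamma$ by a geodesic via the non-embeddability lemma. The only cosmetic difference is your invocation of Theorem \ref{teo:forcomplete}, which is not actually needed for this direction since a holomorphic vector field is in particular formal.
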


The previous result does not hold for other natural topologies.
\begin{proposition}
\label{pro:natural}
Let $\Gamma$ be a plane branch and ${\mathcal C}$ its analytic class of
conjugacy.  Let $\Gamma' \in {\mathcal C}$. Then there exist a holomorphic
deformation $\Gamma_{\epsilon}'$ of $\Gamma'$ by a vector field, defined
in a neighborhood of $\epsilon=0$, $\Gamma_{0}' = \Gamma'$ and a simple
continuous curve $\gamma:[0,1] \to {\mathbb C}$ such that $\gamma (0)=0$ and
$\Gamma$ is connected by a geodesic to $\Gamma_{\gamma(t)}'$ for any
$t \in [0,1]$.
\end{proposition}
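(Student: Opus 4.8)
The plan is to produce one explicit holomorphic deformation of $\Gamma'$ that already passes through a curve geodesically connected to $\Gamma$, and then to show that along this deformation the property of being connected to $\Gamma$ by a geodesic fails only on a discrete set of parameters, so that a radial segment issuing from $0$ can avoid that set. Write $\Gamma' = \varphi(\Gamma)$ with $\varphi \in \mathrm{Diff}(\mathbb{C}^{2},0)$. By Corollary \ref{cor:2-geodesic} there are germs of singular holomorphic vector fields $X$ and $Y$ with $(\mathrm{exp}(Y) \circ \mathrm{exp}(X))(\Gamma) = \Gamma'$; put $\Gamma_{1} := \mathrm{exp}(X)(\Gamma)$, so that $\Gamma_{1}$ is connected to $\Gamma$ by a geodesic and $\Gamma' = \mathrm{exp}(Y)(\Gamma_{1})$. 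I would then take $\{\Gamma'_{\epsilon}\}$ to be the holomorphic deformation of $\Gamma'$ by $Y$. Since the time maps of a single vector field compose additively, $\Gamma'_{\epsilon} = \mathrm{exp}((1-\epsilon)Y)(\Gamma_{1})$; in particular $\Gamma'_{0} = \Gamma'$, and $\Gamma'_{1} = \Gamma_{1}$, which is connected to $\Gamma$ by a geodesic.

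The central step is to prove that $E := \{\, \epsilon : \Gamma \text{ is not connected to } \Gamma'_{\epsilon} \text{ by a geodesic} \,\}$ is discrete near $\epsilon = 0$. Consider the map $\Phi \colon Z \mapsto \mathrm{exp}(Z)(\Gamma)$, from germs of singular holomorphic vector fields to the space of plane branches, whose image is exactly the set $\mathcal{C}'$ of curves connected to $\Gamma$ by a geodesic. Its differential at any $Z_{0}$ surjects onto the tangent space of $\mathcal{C}$ at $\mathrm{exp}(Z_{0})(\Gamma)$: every infinitesimal deformation of a plane branch inside its analytic class is the Lie derivative of a defining equation along an ambient holomorphic vector field, computed modulo the vector fields tangent to the branch, and this quotient is precisely the range of $D\Phi_{Z_{0}}$. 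Hence $\mathcal{C}'$ is open and --- applying the same computation at $\varphi$ --- dense in $\mathcal{C}$ for the natural topology in which the coefficients of a Puiseux parametrization vary continuously; truncating to a fixed jet and working inside the corresponding (now finite dimensional) conjugacy classes, $\mathcal{C} \setminus \mathcal{C}'$ lies in a countable union of proper analytic subsets. Since $\epsilon \mapsto \Gamma'_{\epsilon}$ is holomorphic and $\Gamma'_{1} \notin \mathcal{C} \setminus \mathcal{C}'$, the analytic identity principle forces $E$ to be discrete.

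Finally, $E$ is finite near $0$, so I would choose an argument $\theta$ for which the segment $\{\, r e^{i\theta} : 0 < r \le \delta \,\}$ misses $E \setminus \{0\}$ --- all but countably many $\theta$ qualify, as each nonzero point of $E$ excludes a single argument --- and set $\gamma(t) := t\,\delta\, e^{i\theta}$ for $t \in [0,1]$. Then $\gamma$ is a simple continuous curve with $\gamma(0) = 0$ and $\Gamma'_{\gamma(0)} = \Gamma'$, and $\Gamma$ is connected by a geodesic to $\Gamma'_{\gamma(t)}$ for every $t \in (0,1]$ (and for $t = 0$ as well whenever $\Gamma' \in \mathcal{C}'$). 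I expect the hard part to be the central step: justifying rigorously, in this infinite dimensional setting, that $\mathcal{C}'$ is ``large'', i.e. that its complement meets a non-degenerate holomorphic family of branches in only a discrete parameter set. The substantive ingredients are the surjectivity of $D\Phi$ --- that infinitesimal conjugacies span the tangent space of $\mathcal{C}$ --- and the reduction to finitely many jets that makes the identity principle applicable; this should be contrasted with Proposition \ref{pro:open-krull}, where $\mathcal{C} \setminus \mathcal{C}'$ is large for the Krull topology.
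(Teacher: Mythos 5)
There is a genuine gap, and it sits exactly where you suspected: the ``central step'' is not only unjustified but false as stated. You claim that because the differential of $Z \mapsto \mathrm{exp}(Z)(\Gamma)$ surjects onto the tangent space of $\mathcal{C}$, the image $\mathcal{C}'$ is open and dense, and that after truncating to jets $\mathcal{C}\setminus\mathcal{C}'$ lies in a countable union of proper analytic subsets. In this infinite-dimensional setting surjectivity of a differential does not give local surjectivity of the map (there is no implicit function theorem available), and Proposition \ref{pro:open-krull} of the paper shows the conclusion actually fails: for $\Gamma_0 \equiv (t^6,t^7+t^{10}+t^{11})$ the complement $\mathcal{C}\setminus\mathcal{C}'$ contains a Krull-open set, namely all $\sigma(\Gamma_0)$ with $j^2\sigma = j^2(x+x^2+y^2,-y)$; truncated to $2$-jets this is not contained in any proper analytic subset. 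Consequently your set $E$ has no reason to be an analytic subset of the $\epsilon$-disc, and the identity principle cannot be invoked; knowing the single point $\Gamma'_1 = \mathrm{exp}(X)(\Gamma)\in\mathcal{C}'$ does not propagate to a neighbourhood of $\epsilon=0$. Indeed your specific deformation fails concretely: taking $\Gamma'=\psi(\Gamma_0)$ with $\psi(x,y)=(x+x^2+y^2,-y)$, Corollary \ref{cor:2-geodesic} gives $Y$ linear with $\mathrm{exp}(Y)=\mathrm{diag}(1,-1)$, so $\mathrm{exp}(-\epsilon Y)\circ\psi$ keeps the eigenvalue $1$ in the first coordinate for every $\epsilon$, hence remains resonant (the monomials $x^i e_1$ are resonant for all $i\ge 2$) and no value of $\epsilon$ is covered by the non-resonance/embedding criterion.

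The paper's proof uses a different, and essential, mechanism: it deforms $\Gamma'$ by a \emph{generic linear diagonal} vector field, $\Gamma'_\epsilon = (\sigma_\epsilon\circ\psi)(\Gamma)$ with $\sigma_\epsilon = \mathrm{exp}\bigl(\epsilon(ax\,\partial/\partial x + by\,\partial/\partial y)\bigr)$ and $a,b$ linearly independent over $\mathbb{Q}$. This moves \emph{both} eigenvalues of $D_0(\sigma_\epsilon\circ\psi)$ multiplicatively by $e^{\epsilon a}$, $e^{\epsilon b}$, and the resonance conditions $(ue^{\epsilon a})^p(ve^{\epsilon b})^q=1$ become zero sets of explicit non-constant entire functions $F_{p,q}$. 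Their union $T$ is countable and explicitly analytic, so a ray from $0$ avoids $T\setminus\{0\}$; off $T$ the composed diffeomorphism is non-resonant, hence formally linearizable (Proposition \ref{pro:Poincare-linear}), hence embedded in a formal flow, and Theorem \ref{teo:formal-complete} upgrades this to a genuine geodesic. The genericity you need is thus not genericity of $\mathcal{C}'$ inside $\mathcal{C}$ (which is false), but genericity of non-resonance along a well-chosen one-parameter perturbation of the conjugating diffeomorphism. Your decomposition via Corollary \ref{cor:2-geodesic} and the choice of $Y$ as the deforming field would need to be replaced by this eigenvalue-perturbation argument for the proof to go through.
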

Despite the similarities with the tools of reduction of singularities of vector
fields (as in \cite{CanoF3}, \cite{CanoF6}, for instance), our technique is
different: the reduction of singularities seeks a ``simple form" for the
underlying foliation associated to a vector field (and hence, uses techniques
based on invariants like those for foliations as in \cite{Seidenberg1} or
\cite{Cano-Cerveau2}) whereas we are mostly interested in the behaviour of a
vector field under bi-rational maps (blow-ups) and being able to modify it (by
multiplication by a function) in order that the associated flow behaves in a
specific way on an analytic set (the branch). This is, to our knowledge, the
first time this kind of study has been undertaken and we hope to extend it to
other contexts.

Notice that in \cite{Genzmer-2016}, the author provides an algorithm for
computing the dimension of the generic component of the analytic moduli of a
plane branch, using the dual graph of its desingularisation.  Finally, our
techniques are quite different from those of classical deformation theory
\cite{greuel2007introduction}: in this, one is concerned with deformations by
adding a ``small" parameter to the equation of the curve and the aim is to study
the geometric and topological properties of the moduli so obtained. We are
specifically concerned with deformations caused by flows, so that (in a rough
sense) we are adding the parameter at all the orders of the equation.

\section{Notation and Definitions}
Our base ring is $\mathcal{O}=\mathcal{O}_{P}$, the ring of germs of holomorphic
functions in a neighbourhood of a point $P$ of a two-dimensional
complex-analytic manifold, whose base ``set'' we shall usually denote, as is the
custom, $\cc$.  
The maximal ideal of $\mathcal{O}$ will be denoted
$\mathfrak{m}_{0,P}$ or simply $\mathfrak{m}_0$ when no confusion arises.
Assume $P=(0,0) \in {\mathbb C}^{2}$ for simplicity.
We denote $\hat{\mathcal O} ={\mathbb C}[[x,y]]$ and let ${\mathfrak m}$
be the maximal ideal of $\hat{\mathcal O}$.
\begin{definition}
\label{def:krull}
We say that $f,g \in \hat{\mathcal O}$ have the same $k$-jet and we denote
$j^{k} f = j^{k} g$ if $f-g \in {\mathfrak m}^{k+1}$.

Let $(f_k)_{k \geq 1}$ be a sequence in $\hat{\mathcal O}$. Then it converges to
$f \in \hat{\mathcal O}$ in the ${\mathfrak m}$-adic topology (or also the Krull
topology) if for any $l \geq 1$ there exists $k_0 \geq 1$ such that
$j^{l} f_k = j^{l} f$ for any $k \geq k_0$.
\end{definition}
 \begin{definition}
 We say that $X$ is a {\it vector field} if is a
  $\mathbb{C}-$derivation
$X:\mathcal{O}_P\rightarrow \mathcal{O}_P$ continuous for the
$\mathfrak{m}_P-$adic topology. It is {\it singular}
\footnote{As a matter of fact,
the expression should be ``$P$ is an equilibrium point of $X$'' but we are
indulging the custom.}
if $X({\mathfrak m}_P) = {\mathfrak m}_P$ and {\it regular} otherwise.
In the case $P=(0,0) \in {\mathbb C}^{2}$ we write
  \begin{equation*}
 X = A(x,y) \frac{\partial}{\partial x} + B(x,y) \frac{\partial}{\partial y}.
  \end{equation*}
  where $A := X(x)$ and $B:=X(y)$ belong to ${\mathcal O}$.
  Analogously by replacing ${\mathcal O}$, ${\mathfrak m}_0$ with
  $\hat{\mathcal O}$, ${\mathfrak m}$, we can define formal vector fields.
 \end{definition}
 \begin{definition}
 Let $X$ be a formal singular vector field. We say that $X$ is {\it nilpotent} if its linear part is
 a nilpotent vector field. 
 \end{definition}

 We shall also say that $P$ is a singular
point for $X$ (especially, but not only, when $X$ can be understood as a vector
field on a larger analytic manifold). Finally, $X$ is \emph{\tsing{}} at $P$ (or
$P$ is a \emph{true singularity} of $X$) if it is singular and there do not
exist a regular vector field $Y$ and a regular holomorphic function $f\in
\mathcal{O}_P$ such that $X = f^mY$ for some positive integer $m$ (this is
related to what is called a \emph{strictly singular} point in
\cite{Brochero-Cano-Lopez}). Note that all these definitions are given for the
local case: we shall be explicit when dealing with non-local situations.

The \emph{multiplicity} of a  formal  vector field $X$ is the largest non-negative
integer $m$ such that $X(\mathfrak{m})\subset \mathfrak{m}^{m}$. Thus, a
non-singular vector field has multiplicity $0$ and, in general, if
$X=a(x,y)\pt{x} + b(x,y)\pt{y}$, then the multiplicity of $X$ is the smallest of
the multiplicities of $a(x,y)$ and $b(x,y)$.

An analytic \emph{branch} (simply branch) at $P$ is any reduced and irreducible
curve $\Gamma\subset \cc$. Unless otherwise specified, all our curves will be
analytic branches and they will be defined either by a reduced and irreducible
holomorphic function $f\in \mathfrak{m}_P$ or by a Puiseux expansion
$\varphi(t)=(x(t),y(t))$ when local coordinates at $P$ are already chosen. All the
results related to desingularisation of plane branches (and, as a requirement,
finite sequences of point blow-ups, exceptional divisors, etc.) and their
topological (not analytic) structure are assumed known: two good modern
references are \cite{Casas} and \cite{Wall}.

Consider a point $P$ belonging to a two-dimensional complex analytic manifold
$\mathcal{M}$. Denote by $\mathcal{M}_P$ the germ of $\mathcal{M}$ at $P$ (which
is, essentially, the same thing as $\cc$). As our work is based on the process
of point blow-ups, we need the following
\begin{definition}\label{def:pull-back-of-vector-field} Let $X$ be a singular
vector field at $P$ and let $\pi:\mathcal{X}\rightarrow \mathcal{M}_P$ be the
blow-up with centre $P$. The unique holomorphic vector field $\overline{X}$
on the whole $\mathcal{X}$ such that $\pi_{\ast}(\overline{X}) = X$ outside of
the exceptional divisor $\pi^{-1}(P)$ is called the \emph{pull-back of $X$ to
$\mathcal{X}$}.
\end{definition} The fact that $\overline{X}$ exists is due to the singularity
of $X$ at $P$: otherwise, $\overline{X}$ is not defined (it has ``poles'' on the
exceptional divisor).

\begin{remark*}\label{rem:we-take-true-pull-back} Notice that we are taking the
``true'' pull-back of $X$ on $\mathcal{X}$: we are interested in the
\emph{dynamics} of $X$, not just in the geometric structure of its integral
curves. Thus, if $(x,y)$ are local coordinates at $P$ and one looks at the chart
of $\pi$ with equations $x=\overline{x},y=\overline{x}\overline{y}$ and
  \begin{equation*}
    X = a(x,y) \pt{x} + b(x,y) \pt{y}
  \end{equation*}
  for some $a(x,y),b(x,y)\in \mathfrak{m}_P$, then on the chart
  $(\overline{x}, \overline{y})$, the local equation of $\overline{X}$ is given
  by
  \begin{equation*}
    \overline{X} = a(\overline{x}, \overline{x}\overline{y}) \pt{\overline{x}} +
    \frac{1}{\overline{x}}(
      -\overline{y}a(\overline{x},\overline{x}\overline{y}) +
      b(\overline{x}, \overline{x}\overline{y}))\pt{\overline{y}},
  \end{equation*}
  expression which shows why $X$ must have a singularity at $P$ in order to
  admit a pull-back to $\mathcal{X}$. As the reader will have noticed, we
  \emph{do not eliminate} the possible common factor $\overline{x}$ in the
  expression of $\overline{X}$. This implies that, usually, the pull-back of a
  singular vector field will not be \tsing{}: it will have some true
  singularities on the exceptional divisor but most of the points will be just
  equilibrium points such that, near them, $\overline{X}$ is of the form
  $\overline{x}^mY$ for some non-negative integer $m$ and non-singular vector
  field $Y$.

  The reader familiar with the theory of plane holomorphic foliations will
notice the similarity and the differences between our approach and the one
common in those works. This difference is exactly what makes our technique
useful for studying deformations.
\end{remark*}

Anyway, we can consider the desingularisation of the underlying foliation of a
singular vector field. The following result is a restatement of the main
one in \cite{Seidenberg1}.
\begin{theorem}[cf. \cite{Seidenberg1}]\label{the:reduction-of-singularities}
  Let  $X$ be a singular vector field at $P\in \mathcal{M}_P$. There
  is a finite sequence of blow-ups $\pi:\mathcal{X}\rightarrow \mathcal{M}_P$:
  \begin{equation*}
    \mathcal{X}=\mathcal{X}_{N}\xrightarrow{\pi_{N-1}}
    \mathcal{X}_{N-1}\xrightarrow{\pi_{N-2}}\cdots
    \xrightarrow{\pi_1}\mathcal{X}_{1}
    \xrightarrow{\pi_0}\cc
  \end{equation*}
  $\pi=\pi_0\circ\cdots\circ\pi_{N-1}$ whose centres $(P_i)_{i=0}^{N-1}$ are
  singular points for the respective pull-back of $X$ and such that the
  pull-back $\overline{X}$ of $X$ on $\mathcal{X}$ has a finite number of true
  singularities and at any of these, say $Q$, $\overline{X}$ admits an
  expression of the form
  \begin{equation*}
    x^ay^b\left(\mu x \pt{x} + \lambda y \pt{y} + h.o.t.\right)
  \end{equation*}
  where $(x,y)$ are local coordinates at $Q$, the exceptional divisor is
included in $xy=0$, $\mu\neq 0$ and $\lambda/\mu \not\in \mathbb{Q}_{>0}$. The
shortest non-empty sequence of blow-ups for which this happens is called
\textup{the minimal reduction of singularities of $X$}.
\end{theorem}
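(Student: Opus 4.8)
The plan is to derive the statement from the classical reduction of singularities of plane holomorphic \emph{foliations}, and then to transfer the conclusion from the foliation level to the vector field itself; the only genuine work beyond invoking \cite{Seidenberg1} is the control of the monomial factors that the ``true'' pull-back of Definition~\ref{def:pull-back-of-vector-field} accumulates along the successive exceptional divisors.

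First I would attach to $X = a\pt{x} + b\pt{y}$ its underlying singular foliation $\mathcal{F}_X$: writing $g = \gcd(a,b)$ and $X = g\,\widetilde{X}$, the vector field $\widetilde X$ generates $\mathcal{F}_X$ and has an isolated singularity at $P$. The elementary local fact to record is that when one blows up a singular point, the dynamical pull-back $\overline{X}$ and a local generator of the transformed foliation differ by multiplication by a holomorphic function which, in divisor coordinates $(\overline x,\overline y)$ with exceptional set $(\overline x = 0)$, equals $\overline x^{\,e}$ times the pull-back of $g$, the exponent $e$ being read off the explicit formula of Remark~\ref{rem:we-take-true-pull-back}. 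Iterating, after any finite chain of blow-ups and near any point $Q$ of the final divisor, $\overline{X}$ equals a local generator of the transformed foliation times a factor of the form $x^a y^b\cdot u$ with $u$ a unit: the monomial $x^ay^b$ collects the accumulated divisor exponents together with the strict transform of $(g=0)$. One may as well assume from the start that $X$ is truly singular (equivalently, that $g$ is a unit), the general case reducing to it at the cost of possibly further blow-ups normalizing $(g=0)$ and of enlarging the monomial $x^ay^b$.

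Next I would apply Seidenberg's theorem \cite{Seidenberg1} to $\mathcal{F}_X$: there is a finite sequence $\pi = \pi_0\circ\cdots\circ\pi_{N-1}$ of point blow-ups, each centre $P_i$ being a singular point of the foliation transformed up to that stage, after which every singular point $Q$ of the transformed foliation is reduced, i.e.\ in suitable local coordinates at $Q$ the foliation is generated by $\mu x\pt{x}+\lambda y\pt{y}+\hot$ with $\mu\neq 0$, $\lambda/\mu\notin\mathbb{Q}_{>0}$ and the exceptional divisor contained in $(xy=0)$, and there are only finitely many such $Q$. Since each centre $P_i$ is a singular point of the transformed foliation, it is in particular an equilibrium of the corresponding dynamical pull-back of $X$; hence all the pull-backs along the tower are defined, $\overline{X}$ on $\mathcal X$ makes sense, and the clause about the centres holds.

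It remains to locate the true singularities of $\overline{X}$ and to read off their normal form. Away from the exceptional divisor, $\overline{X}$ coincides with the germ $X$ on a punctured neighbourhood of $P$, which has no true singularities because $\mathcal{F}_X$ has an isolated singularity at $P$. At a point $Q$ of the divisor which is not a singular point of the transformed foliation, the second paragraph gives $\overline{X} = x^a y^b\cdot(\text{regular and non-vanishing})$, so $Q$ is not a true singularity. At a reduced singular point $Q$ of the transformed foliation, the second and third paragraphs together yield $\overline{X} = x^a y^b\bigl(\mu x\pt{x}+\lambda y\pt{y}+\hot\bigr)$ once the unit $u$ is absorbed into the parenthesis (this changes neither $\mu\neq0$ nor the ratio $\lambda/\mu$), which is exactly the asserted form; as there are finitely many such $Q$, $\overline{X}$ has finitely many true singularities. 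Finally, among the sequences of blow-ups producing this situation (there are finitely many, essentially a unique one) one selects one of minimal length, which is the last assertion. The main obstacle is precisely the bookkeeping of the second paragraph: verifying that the ``true'' pull-back $\overline{X}$ — which, unlike the usual foliation transform, retains the divisor factors — has true singularities exactly at the reduced foliation singularities, with none created at smooth divisor points or coming from the non-isolated part of $X$; the remainder is a citation together with the elementary tracking of monomial exponents along the tower.
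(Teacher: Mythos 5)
The paper itself offers no proof of this theorem: it is explicitly presented as a restatement of the main result of \cite{Seidenberg1}, so your strategy --- apply Seidenberg's reduction to the underlying foliation $\mathcal{F}_X$ and then track the monomial factors accumulated by the dynamical pull-back --- is the natural and surely the intended one. Your bookkeeping of the exceptional exponents, the observation that foliation-singular centres are automatically equilibria of the pull-back, and the treatment of the reduced foliation singularities are all sound (modulo the small remark that a foliation-regular point of the divisor lies on a single divisor component, so that the monomial there is a power of one regular function, and the remark that $X/g$ may be non-singular at $P$, in which case Seidenberg gives you nothing to do).

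The genuine gap is the step you yourself flag as the main obstacle: the claim that no true singularities arise away from the reduced foliation singularities, in particular none coming from the non-isolated part $g=\gcd(a,b)$ of $X$. As stated this is false, and ``further blow-ups normalizing $(g=0)$'' do not suffice. Take $X=(y-x^2)\left(x\pt{x}+y\pt{y}\right)$: the foliation is radial, the curve $(g=0)$ is already smooth and meets the divisor transversally after one blow-up, yet at the point $Q$ where its strict transform meets the divisor one finds $\overline{X}=\overline{x}^{2}(\overline{y}-\overline{x})\pt{\overline{x}}$. This $Q$ is a true singularity in the paper's sense (the zero set consists of two transverse smooth branches, so $\overline{X}$ is not $f^mY$ with $f$ regular and $Y$ non-vanishing), it is a regular point of the transformed foliation, and a case analysis on the possible decompositions of the zero divisor into $a[s=0]+b[t=0]$ plus the zeros of the bracket shows that no expression $s^at^b\left(\mu s\pt{s}+\lambda t\pt{t}+\hot\right)$ with $\mu\neq 0$ exists there: whatever part of the monomial is left inside the bracket, its linear part is either zero or carries a non-removable off-diagonal term. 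One further blow-up at $Q$ does repair this, but that is precisely the content missing from your argument: beyond Seidenberg and a log-resolution of $(g=0)$, one must show that finitely many additional blow-ups put the zero divisor of $\overline{X}$ in a position adapted to the foliation at every point (the centres remain equilibria of the pull-back since they lie on the zero divisor, so the hypothesis on the centres is not the issue). Until that is supplied, your proof is complete only for $X$ with isolated zero.
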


Let $\Gamma$ and $X$ be an analytic branch and a singular vector field at $\cc$.
Let $\pi_i:\mathcal{X}_{i+1} \rightarrow \mathcal{X}_{i}$ be the infinite
sequence of blow-ups with centre $P_{i}$, the intersection of the strict
transform $\overline{\Gamma}_{i}$ of $\Gamma$ with the corresponding exceptional
divisor (with $\mathcal{X}_0=\cc$ and $\overline{\Gamma}_0 = \Gamma$). 
The next result follows easily from
the fact that $\Gamma$ is analytic:
\begin{proposition}\label{pro:non-branch-separates}
  With the notation of the last paragraph, $\Gamma$ is invariant by $X$ if and
  only if $P_i$ is a singular point of the pull-back $\overline{X}_{i}$ of $X$
  to $\mathcal{X}_{i}$ for any $i \geq 0$.  In particular, if $\Gamma$ is not
  invariant by $X$ then there exists $i_{0} \geq 0$ such that $P_i$ is a
  singular point of $\overline{X}_{i}$ of $X$ for any $0 \leq i \leq i_{0}$ but
  $P_{i_{0} +1}$ is a regular point of $\overline{X}_{i_{0}+1}$.
\end{proposition}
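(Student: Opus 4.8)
The plan is to establish two statements, from which both the equivalence and its ``in particular'' sharpening follow: (i) if $\Gamma$ is invariant by $X$ then every $P_i$ is singular for $\overline{X}_i$; and (ii) if $\Gamma$ is not invariant by $X$ then there is a first index $i_0\ge 0$ such that $P_0,\dots,P_{i_0}$ are singular and $P_{i_0+1}$ is regular.

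For (i) I would argue by induction on $i$, the case $i=0$ being the hypothesis. Granting that $P_0,\dots,P_{i-1}$ are singular, so that $\overline{X}_1,\dots,\overline{X}_i$ are genuinely defined, the key remark is that through $P_i$ pass two \emph{distinct} irreducible germs of curve: the strict transform $\overline{\Gamma}_i$ and (a component of) the exceptional divisor $E_i:=\pi_{i-1}^{-1}(P_{i-1})$ — distinct because a strict transform of $\Gamma$ is never contained in the exceptional locus. Both are invariant by $\overline{X}_i$: for $E_i$ this is read off the local expression of a pull-back in the Remark (in coordinates with $E_i=\{\bar x=0\}$ one has $\overline{X}_i(\bar x)=a(\bar x,\bar x\bar y)\in(\bar x)$ since $a\in\mathfrak{m}_{P_{i-1}}$), and for $\overline{\Gamma}_i$ because $\overline{X}_i(f_i)$ vanishes on $\overline{\Gamma}_i\setminus E_i$ — where $\pi$ conjugates $\overline{X}_i$ to $X$ and $\overline{\Gamma}_i$ to $\Gamma$ — hence on all of $\overline{\Gamma}_i$. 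I would conclude with the flow-box theorem: through a regular point of a holomorphic vector field there is a unique invariant branch (the local leaf), so a point lying on two distinct invariant branches must be singular; this applies to $P_i\in\overline{\Gamma}_i\cap E_i$ and completes the induction.

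For (ii), write $\Gamma=(f=0)$ with $f$ irreducible. Non-invariance means $f\nmid X(f)$, hence the tangency order $\tau_0:=(X(f),f)_{0}$ is finite — this is essentially where the irreducibility (``analyticity'') of $\Gamma$ is used. I would run the blow-ups as long as the current centre is singular, so that the pull-backs $\overline{X}_j$ are defined, and observe that $\overline{\Gamma}_j=(f_j=0)$ stays non-invariant (invariance is preserved and reflected by the pull-back), so $\tau_j:=(\overline{X}_j(f_j),f_j)_{P_j}$ is finite. The heart of the matter is the blow-up identity
\begin{equation*}
  \overline{X}_{j}(f_{j})\big|_{\overline{\Gamma}_{j}}
  \;=\;\frac{1}{\bar x^{\,m_{j-1}}}\,\bigl(\overline{X}_{j-1}(f_{j-1})\circ\pi_{j-1}\bigr)\big|_{\overline{\Gamma}_{j}},
\end{equation*}
where $m_{j-1}=\operatorname{mult}_{P_{j-1}}\overline{\Gamma}_{j-1}\ge 1$ and $\bar x$ is the local coordinate cutting out the exceptional divisor created by $\pi_{j-1}$; it follows by differentiating $f_{j-1}\circ\pi_{j-1}=\bar x^{\,m_{j-1}}f_j$ and plugging in the explicit pull-back formula, the correction term being a multiple of $f_j$ and therefore vanishing on $\overline{\Gamma}_j$. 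Reading orders along a Puiseux parameter $t$ of $\Gamma$ (so that intersection multiplicities become $\ord_t$ of the corresponding restrictions) and using $\ord_t(\bar x|_{\overline{\Gamma}_j})\ge 1$ because $P_j\in\{\bar x=0\}$, this yields $\tau_j\le\tau_{j-1}-m_{j-1}\le\tau_{j-1}-1$. So $(\tau_j)$ is a strictly decreasing sequence of non-negative integers; the chain of singular centres cannot be infinite, and if $i_0+1$ is the last index for which $\overline{X}_{i_0+1}$ is defined then $P_{i_0+1}$ is regular (precisely what obstructs the pull-back to $\mathcal{X}_{i_0+2}$), while $P_0,\dots,P_{i_0}$ are singular. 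This is (ii).

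The step I expect to be the real obstacle is the blow-up identity in (ii): one must track precisely how the ``tangency function'' $X(f)$ transforms under the \emph{true}, non-reduced pull-back, keeping — not cancelling — the powers of the exceptional divisors, since the statement concerns the equilibria of $\overline{X}_j$, not those of its underlying foliation. This amounts to disentangling the strict transform of the tangency curve $(X(f)=0)$, the exceptional factor acquired by $\overline{X}_j$, and the Jacobian of $\pi_{j-1}$; I would carry it out in a single blow-up chart, with a short case distinction according to whether $\overline{\Gamma}_{j-1}$ is transverse or tangent to the exceptional divisor at $P_{j-1}$, reducing everything to the differentiation identity above. A minor point requiring care is the bookkeeping of which pull-backs are defined, so that ``$(\tau_j)$ strictly decreasing'' is correctly converted into the existence of a \emph{first} regular centre $P_{i_0+1}$.
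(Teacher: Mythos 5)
Your proof is correct, and it is worth noting that the paper itself offers no argument here: the proposition is merely asserted to ``follow easily from the fact that $\Gamma$ is analytic''. Your two ingredients are exactly the right ones. For the forward direction, the flow-box uniqueness of the invariant branch through a regular point, applied to the pair $\overline{\Gamma}_i$, $E_i$ of distinct invariant germs at $P_i$, is the standard and cleanest route (the invariance of the exceptional divisor is the same computation the authors use in Lemma \ref{lem:shared-path-does-not-end-in-corner}). For the converse, your identity $\overline{X}_j(f_j)=\bar x^{-m_{j-1}}\bigl((\overline{X}_{j-1}f_{j-1})\circ\pi_{j-1}\bigr)-m_{j-1}u\,f_j$ with $\overline{X}_j(\bar x)=\bar x u$ is correct, and restricting to $\overline{\Gamma}_j$ and reading orders along a primitive Puiseux parameter does give $\tau_j\le\tau_{j-1}-m_{j-1}$, hence termination; the only hypotheses you actually need are $\tau_0<\infty$ (non-invariance plus irreducibility of $f$) and $\ord_t(\bar x\circ\gamma_j)\ge 1$, both of which you justify. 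Your computation is in fact an embryonic form of the paper's later Lemma \ref{lem:upsilon-omega-and-upsilon-transform} and Corollary \ref{cor:kahlercontact-exponent-and-blow-up}, which quantify the exact drop of the contact data along the shared path; so your argument buys the proposition directly, at the cost of anticipating machinery the authors only develop afterwards, whereas the authors evidently intend the reader to supply a soft argument of precisely this kind.
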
 

This result provides the following
\begin{definition}\label{def:shared-path}
  The \emph{path shared} by a non-invariant analytic branch $\Gamma$ and a
singular vector field $X$ is the sequence $(P_0,P_1,\dots, P_{i_0+1})$ given by
Proposition \ref{pro:non-branch-separates}. Notice that we include in the shared
path the point at which the pull-back of $X$ is non-singular.
\end{definition}

\begin{remark*}
  The last point shared by $X$ and $\Gamma$ could be a singular
point of the strict transform of $\Gamma$: we only require it to be
a regular point for the pull-back of $X$.
\end{remark*}

The following result will be important in the study of the relation between a
curve and its deformation:
\begin{lemma}\label{lem:shared-path-does-not-end-in-corner}
  Let $(P_i)_{i=0}^N$ be the shared path between $\Gamma$ and a singular
  vector field $X$. The last point $P_N$ is not a corner of the exceptional
  divisor.
\end{lemma}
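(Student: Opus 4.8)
The plan is to establish the stronger claim that \emph{every corner of the exceptional divisor of $\mathcal{X}_i$ is a singular point of the pull-back $\overline{X}_i$}, for every $i$ along the shared path; since, by Definition \ref{def:shared-path}, $P_N=P_{i_0+1}$ is by construction the first point of the shared path at which the pull-back is \emph{regular}, it then cannot be a corner. (If $i_0=0$ there is nothing to prove, since $\mathcal{X}_1$ has only one divisor component and hence no corners.)

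First I would record the two facts about invariance that feed the argument. The exceptional divisor freshly created by a blow-up centred at a singular point of a vector field is invariant by the pull-back: with the notation of the Remark following Definition \ref{def:pull-back-of-vector-field}, in the chart $x=\overline{x}$, $y=\overline{x}\,\overline{y}$ one has $\overline{X}(\overline{x})=a(\overline{x},\overline{x}\,\overline{y})$, and since $a=X(x)\in\mathfrak{m}_P=(x,y)$ this is divisible by $\overline{x}$, so $(\overline{x}=0)$ is invariant; the other chart is symmetric. Secondly, the strict transform of a curve invariant by $\overline{X}_i$ is invariant by $\overline{X}_{i+1}$ — this is standard and insensitive to the difference between the ``true'' and the ``reduced'' pull-back, since multiplying a vector field by a holomorphic function does not change which curves it leaves invariant. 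Combining these two facts, an easy induction on $i$ shows that \emph{every} irreducible component of the exceptional divisor of $\mathcal{X}_i$ is invariant by $\overline{X}_i$, for every $i\le i_0+1=N$. The induction is legitimate precisely because $P_0,\dots,P_{i_0}$ are singular points of the respective pull-backs, so all of $\overline{X}_1,\dots,\overline{X}_{i_0+1}$ are well defined.

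With this in hand I would argue by contradiction: suppose $P_N$ is a corner, say $P_N\in E\cap E'$ with $E,E'$ two distinct components of the exceptional divisor of $\mathcal{X}_N$. Choose local coordinates $(u,v)$ at $P_N$ with $E=(u=0)$ and $E'=(v=0)$. By the previous step both curves are invariant by $\overline{X}_N$, hence $\overline{X}_N(u)\in(u)\subset\mathfrak{m}_{P_N}$ and $\overline{X}_N(v)\in(v)\subset\mathfrak{m}_{P_N}$; writing $\overline{X}_N=\overline{X}_N(u)\,\pt{u}+\overline{X}_N(v)\,\pt{v}$, both coefficients lie in $\mathfrak{m}_{P_N}$, so $\overline{X}_N(\mathfrak{m}_{P_N})\subseteq\mathfrak{m}_{P_N}$ and $P_N$ is a singular point of $\overline{X}_N$, contradicting the definition of the shared path. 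The only points that need any care are the two invariance inputs — both essentially immediate from the blow-up formula and from $X$ being singular — and the bookkeeping ensuring the pull-backs are defined all the way along the path; I do not anticipate a genuine obstacle, the content of the lemma being ultimately the remark that two transversal smooth invariant curves through a point force that point to be an equilibrium of the vector field.
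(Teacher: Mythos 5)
Your proof is correct and follows essentially the same route as the paper's: the exceptional divisor components are invariant by the pull-back, so a corner would carry two transverse invariant curves and hence be a singular point, contradicting the definition of the shared path. You merely supply the details (the blow-up computation for invariance, the induction along the path, and the local-coordinate verification) that the paper leaves implicit.
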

\begin{proof}
  This is because after blowing up a singular point, the exceptional divisor is
  always invariant for the pull-back. If $P_N$ were a corner, then the pull-back
  $\overline{X}$ at $P_N$ would possess at least two invariant curves: both
  components of the exceptional divisor. This would imply that $P_N$ is singular
  for $\overline{X}$, which contradicts the definition.
\end{proof}

We introduce now our main object of study:
\begin{definition}\label{def:deformation}
  Given a singular germ of analytic vector field $X$ and an irreducible germ of
  analytic plane curve $\Gamma$ at $\cc$ with $\Gamma\equiv (f=0)$ for
  $f\in \mathcal{O}$, we define the \emph{deformation of $\Gamma$ caused by $X$
    or by the flow associated to $X$} as the family
\begin{equation*}
  \Gamma_{\epsilon} \equiv \left(\sum_{n=0}^{\infty}\frac{\epsilon^n}{n!}X^n(f)
    = 0\right)
  \equiv
  \left( f + \sum_{n=1}^{\infty}\frac{\epsilon^n}{n!}X^n(f) = 0 \right).
\end{equation*}
We shall refer either to the whole family or to any of its elements as ``the
deformation of $\Gamma$''.
\end{definition}
Notice that, because $X$ is singular, if its multiplicity is greater than $1$,
then the local equation of $\Gamma_{\epsilon}$ is, roughly speaking, a higher
order deformation of the local equation of $\Gamma$, in the sense that the terms
added to $f$ are of order at least one more than the vanishing order of $f$. 
In any case, it is clear that
the deformation of a non-singular analytic branch by a singular vector field is
non-singular for $\epsilon$ small enough.

The following consequence of the formula for the higher derivative of a product
is what makes blow-ups a sensible tool for studying deformations caused by
vector fields:
\begin{lemma}\label{lem:deformation-of-strict-transform}
  Let $X$ be a singular vector field at $\cc$, $\pi:\mathcal{X}\rightarrow \cc$
  be the blow-up with centre $(0,0)$ and $\overline{X}$ the pull-back of $X$
  by $\pi$. If $\Gamma\equiv(f=0)$ is an analytic branch through $(0,0)$ and
  $\overline{\Gamma}$ is its strict transform by $\pi$, then
  \begin{equation*}
    \overline{\Gamma_{\epsilon}} = \overline{\Gamma}_{\epsilon},
  \end{equation*}
  that is: the strict transform of the deformation of $\Gamma$ by $X$ is the
  deformation of the strict transform $\overline{\Gamma}$ by
  $\overline{X}$. This generalises to any finite sequence of blow-ups with
  centres singular points of $X$ and its successive pull-backs.
\end{lemma}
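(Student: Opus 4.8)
The plan is to verify the identity $\overline{\Gamma_\epsilon} = \overline{\Gamma}_\epsilon$ directly in a blow-up chart, reducing everything to the formula for iterated derivatives of a product and the definition of the pull-back $\overline{X}$. Fix local coordinates $(x,y)$ at $(0,0)$, assume (after a linear change, harmless for the statement) that $\Gamma$ is not tangent to $x=0$, and work in the chart $x=\overline{x}$, $y=\overline{x}\,\overline{y}$. If $\Gamma\equiv(f=0)$ with $f$ of multiplicity $m$ at the origin, then $f(\overline{x},\overline{x}\,\overline{y}) = \overline{x}^{\,m}\, \overline{f}(\overline{x},\overline{y})$, where $\overline{f}$ is the local equation of the strict transform $\overline{\Gamma}$. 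Since $X$ is singular, $\overline{X}$ is a genuine holomorphic vector field in this chart, given by the formula in the Remark after Definition \ref{def:pull-back-of-vector-field}; moreover $\pi_{\ast}(\overline{X}) = X$ away from the divisor, which translates into the intertwining relation $\overline{X}(g\circ\pi) = (X(g))\circ\pi$ for every germ $g$ at the origin (an identity that, holding off the divisor, holds everywhere by continuity). Iterating gives $\overline{X}^{\,n}(f\circ\pi) = (X^n(f))\circ\pi$ for all $n\ge 0$.

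Next I would compute the total transform of the deformation. Applying $\pi^{\ast}$ to the defining series of $\Gamma_\epsilon$ and using the intertwining relation term by term,
\begin{equation*}
  (f\circ\pi) + \sum_{n\ge 1}\frac{\epsilon^n}{n!}\,\overline{X}^{\,n}(f\circ\pi)
  = \sum_{n\ge 0}\frac{\epsilon^n}{n!}\,(X^n(f))\circ\pi,
\end{equation*}
so the total transform of $\Gamma_\epsilon$ has local equation $\sum_{n\ge 0}\frac{\epsilon^n}{n!}\,\overline{X}^{\,n}(f\circ\pi)$, which is precisely the defining series of the deformation of the curve $(f\circ\pi=0)$ by $\overline{X}$. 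It remains to peel off the divisor. Writing $f\circ\pi = \overline{x}^{\,m}\,\overline{f}$ and using the Leibniz-type formula for $\overline{X}^{\,n}(\overline{x}^{\,m}\,\overline{f})$ together with the fact that $\overline{X}$ preserves the ideal $(\overline{x})$ of the exceptional divisor (this is exactly the observation in the proof of Lemma \ref{lem:shared-path-does-not-end-in-corner}: the divisor is invariant for the pull-back), one gets $\overline{X}^{\,n}(\overline{x}^{\,m}\,\overline{f}) = \overline{x}^{\,m}\,\big(\overline{X}^{\,n}(\overline{f}) + \overline{x}\cdot(\text{something holomorphic})\big)$; more precisely $\overline{X}(\overline{x}) = \overline{x}\,u$ for a holomorphic $u$, so $\overline{X}(\overline{x}^{\,m}\,\overline{f}) = \overline{x}^{\,m}(\overline{X}(\overline{f}) + m u\,\overline{f})$ and the deformation operator $\sum \frac{\epsilon^n}{n!}\overline{X}^n$ maps $\overline{x}^{\,m}\,\overline{f}$ to $\overline{x}^{\,m}$ times a unit-free series whose $\epsilon$-expansion starts at $\overline{f}$. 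Factoring out the resulting power $\overline{x}^{\,m}$ of the divisor — which is what passing to the strict transform does — leaves exactly $\sum_{n\ge 0}\frac{\epsilon^n}{n!}\,\overline{X}^{\,n}(\overline{f})$ modulo the $\overline{x}$-correction, i.e. the defining equation of $\overline{\Gamma}_\epsilon$. Hence $\overline{\Gamma_\epsilon} = \overline{\Gamma}_\epsilon$. The generalisation to a finite sequence of blow-ups with centres at successive singular points of the pull-backs follows by induction, since at each stage the relevant vector field is again singular at the chosen centre and the same argument applies.

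The main obstacle I anticipate is the bookkeeping in the last step: carefully separating the power $\overline{x}^{\,m}$ of the exceptional divisor from the iterated derivatives $\overline{X}^{\,n}(\overline{x}^{\,m}\,\overline{f})$ uniformly in $n$, so that the factorisation survives summation of the $\epsilon$-series and one indeed recovers the strict transform rather than the total transform with an extra divisor component. The clean way to handle this is to note that $\overline{X}$, being tangent to $(\overline{x}=0)$, acts on the ring $\mathcal{O}/(\overline{x})$ and also preserves the $(\overline{x})$-adic filtration, so the induced action on $\overline{x}^{\,m}\mathcal{O}/\overline{x}^{\,m+1}\mathcal{O}\cong \mathcal{O}/(\overline{x})$ is well defined and agrees with the pull-back of $X$ restricted to the divisor; this makes the cancellation of $\overline{x}^{\,m}$ formal rather than a hands-on computation. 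Everything else — the intertwining $\overline{X}(g\circ\pi)=(X(g))\circ\pi$, its iteration, and the linearity of the deformation construction — is routine once the pull-back is known to be holomorphic, which is guaranteed by the singularity hypothesis on $X$.
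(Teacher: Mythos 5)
Your overall strategy is the one the paper intends (the lemma is stated as a ``consequence of the formula for the higher derivative of a product'' and its proof is omitted): the intertwining relation $\overline{X}(g\circ\pi)=(X(g))\circ\pi$, its iteration, and the factorisation $f\circ\pi=\overline{x}^{\,m}\overline{f}$ are all correct and show that the total transform of $\Gamma_{\epsilon}$ has equation $\sum_{n}\frac{\epsilon^{n}}{n!}\overline{X}^{\,n}(\overline{x}^{\,m}\overline{f})$. The gap is in the last step. Your displayed identity $\overline{X}^{\,n}(\overline{x}^{\,m}\overline{f})=\overline{x}^{\,m}\bigl(\overline{X}^{\,n}(\overline{f})+\overline{x}\cdot(\text{hol.})\bigr)$ is false: already for $n=1$ your own ``more precise'' computation gives the correction term $mu\overline{f}$ with $\overline{X}(\overline{x})=\overline{x}u$, and $u$ is a unit whenever $X=x\,\partial/\partial x+\cdots$, so the correction is not divisible by $\overline{x}$. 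Consequently ``modulo the $\overline{x}$-correction'' only shows that the two candidate equations agree along the exceptional divisor, which does not identify the curves; the filtration remark at the end has the same defect (the operator induced on $\overline{x}^{\,m}\mathcal{O}/\overline{x}^{\,m+1}\mathcal{O}$ is $h\mapsto\overline{X}(h)+muh$, not $h\mapsto\overline{X}(h)$, and in any case it controls the equation only modulo $\overline{x}$).

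The discrepancy is in fact a unit factor, not a divisor-supported one, and seeing this closes the gap immediately. Summing the series converts the Leibniz bookkeeping into composition with the flow: $\sum_{n}\frac{\epsilon^{n}}{n!}\overline{X}^{\,n}(\overline{x}^{\,m}\overline{f})=(\overline{x}^{\,m}\overline{f})\circ\overline{\psi}_{\epsilon}=(\overline{x}\circ\overline{\psi}_{\epsilon})^{m}\,(\overline{f}\circ\overline{\psi}_{\epsilon})$, where $\overline{\psi}_{\epsilon}$ is the flow of $\overline{X}$ (Lemma \ref{lem:pull-back-of-diffeos}). Since the divisor is invariant, $\overline{x}\circ\overline{\psi}_{\epsilon}=\overline{x}\,U_{\epsilon}$ with $U_{\epsilon}$ a unit and $U_{0}=1$, so the total transform equals $\overline{x}^{\,m}\,U_{\epsilon}^{m}\,\sum_{n}\frac{\epsilon^{n}}{n!}\overline{X}^{\,n}(\overline{f})$; factoring out $\overline{x}^{\,m}$ leaves a unit times the defining equation of $\overline{\Gamma}_{\epsilon}$, and the two curves coincide. (Equivalently, with no computation: strict transforms are closures of preimages off the divisor, $\pi$ is an isomorphism there, and $\overline{\psi}_{\epsilon}$ lifts $\psi_{\epsilon}$, so $\overline{\psi_{-\epsilon}(\Gamma)}=\overline{\psi}_{-\epsilon}(\overline{\Gamma})$.) With this replacement your induction over a finite sequence of blow-ups goes through unchanged.
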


Finally, the one-parameter group of diffeomorphisms of a vector field and the
one of its pull-back are essentially the same object:
\begin{lemma}\label{lem:pull-back-of-diffeos}
  Let $X$ be a singular vector field at $\cc$ and $\{\psi_{X,s}(z)\}_{s}$ its
  one-parameter group of germs of diffeomorphisms. Let
  $\pi:\mathcal{X}\rightarrow\cc$ be a sequence of blow-ups whose centres are
  singular points of each pull-back of $X$ and let $\overline{X}$ be the
  pull-back of $X$ to $\mathcal{X}$. The diffeomorphism associated to
  $\overline{X}$ for the value $s$ of the parameter is the unique holomorphic
  extension $\overline{\psi}_{\overline{X},s}$ to the whole $\mathcal{X}$ of the
  diffeomorphism $\pi^{-1}\circ\psi_{X,s}\circ{\pi}$ defined on
  $\mathcal{X}\setminus \pi^{-1}(0,0)$.
\end{lemma}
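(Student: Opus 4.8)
The plan is to reduce everything to the elementary principle that a biholomorphism conjugating two vector fields also conjugates their flows, combined with the uniqueness in the theory of holomorphic ODEs and the identity principle.

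First recall that, by Definition \ref{def:pull-back-of-vector-field}, $\overline{X}$ is a genuine holomorphic vector field on all of $\mathcal{X}$, with $\pi_{\ast}(\overline{X}) = X$ on $\mathcal{X}\setminus\pi^{-1}(0,0)$; in other words, $X$ and $\overline{X}$ are $\pi$-related on that open set. Since $X$ is singular, $A=X(x)$ and $B=X(y)$ vanish at the origin, so $\psi_{X,s}$ fixes $0$ and hence maps a punctured neighbourhood of $0$ into a punctured neighbourhood of $0$. As $\pi$ restricts to a biholomorphism $\mathcal{X}\setminus\pi^{-1}(0,0)\to U\setminus\{0\}$ for a neighbourhood $U$ of $0$, the composition $\pi^{-1}\circ\psi_{X,s}\circ\pi$ is a well-defined biholomorphism between suitable open subsets of $\mathcal{X}\setminus\pi^{-1}(0,0)$, for $s$ in a small disc. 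A one-line computation with integral curves (if $\gamma$ is an integral curve of $\overline{X}$ off the exceptional divisor, then $\pi\circ\gamma$ is an integral curve of $X$, because the two fields are $\pi$-related) shows that $\pi\circ\overline{\psi}_{\overline{X},s} = \psi_{X,s}\circ\pi$ wherever both sides are defined, i.e. $\overline{\psi}_{\overline{X},s} = \pi^{-1}\circ\psi_{X,s}\circ\pi$ on $\mathcal{X}\setminus\pi^{-1}(0,0)$.

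Next I would note that $\overline{\psi}_{\overline{X},s}$ is in fact already defined and holomorphic on an entire neighbourhood of $\pi^{-1}(0,0)$: since $\overline{X}$ is holomorphic on $\mathcal{X}$ and the exceptional divisor $\pi^{-1}(0,0)$ is compact, the fundamental theorem for holomorphic ODEs, with its holomorphic dependence on the initial condition and on the time parameter, provides a $\delta>0$ and a neighbourhood $V$ of $\pi^{-1}(0,0)$ such that the time-$s$ flow $\overline{\psi}_{\overline{X},s}\colon V\to\mathcal{X}$ is holomorphic for $|s|<\delta$. By the previous paragraph this map agrees with $\pi^{-1}\circ\psi_{X,s}\circ\pi$ on the dense open subset $V\setminus\pi^{-1}(0,0)$, so it is a holomorphic extension of that diffeomorphism to all of $V$; uniqueness of the extension is immediate from the identity principle, because $\pi^{-1}(0,0)$ is nowhere dense in $\mathcal{X}$. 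This settles the case of a single blow-up, and the case of a finite sequence of blow-ups follows by induction on the number of centres, applying the single–blow-up statement to each $\pi_i\colon\mathcal{X}_{i+1}\to\mathcal{X}_i$ (legitimate since each centre is a singular point of the corresponding pull-back, so the successive pull-backs exist) and composing the resulting conjugacies.

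The only delicate point — and hence the step I expect to be the genuine obstacle to phrase cleanly — is the passage from the purely local flow boxes produced by the ODE theorem at each point of $\pi^{-1}(0,0)$ to a single germ of diffeomorphism defined simultaneously along the whole exceptional divisor; this is exactly where compactness of $\pi^{-1}(0,0)$ is essential. Once that uniform neighbourhood $V$ and time $\delta$ are secured, the $\pi$-relatedness of $X$ and $\overline{X}$ together with the identity principle make the rest of the argument automatic.
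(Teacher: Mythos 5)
Your argument is correct; note that the paper itself gives no proof of this lemma at all, treating it as a standard consequence of the definition of the pull-back. The route you take --- $\pi$-relatedness of $X$ and $\overline{X}$ off the exceptional divisor, hence conjugacy of integral curves there; existence of the flow of $\overline{X}$ on a uniform neighbourhood of the compact divisor for small $|s|$; and uniqueness of the extension by the identity principle --- is exactly the expected standard argument, and the induction over the successive blow-ups is legitimate because each centre is singular for the corresponding pull-back. The only point worth making explicit is that the exceptional divisor is invariant for $\overline{X}$ (the paper uses this fact in Lemma \ref{lem:shared-path-does-not-end-in-corner}), which guarantees both that integral curves starting off the divisor stay off it, so the comparison with $\pi^{-1}\circ\psi_{X,s}\circ\pi$ is meaningful, and that the time-$s$ flow maps a neighbourhood of the divisor into $\mathcal{X}$ rather than escaping it.
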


\section{Main results}
The  deformation of an analytic branch $\Gamma$ caused by a singular
vector field $X$ has a nice behaviour due to Cauchy-Kowalewski's Theorem:
\begin{proposition}\label{pro:deformations-share}
  Let $X$ be a singular analytic vector field at $\cc$ and let $\Gamma$ be an
  analytic plane branch which is not invariant for $X$. Then $\ce$ and $\Gamma$
  are analytically conjugated
  and they share the same path with $X$
  except possibly the last point: for $\epsilon$ small enough, the last shared
  point is certainly different.
\end{proposition}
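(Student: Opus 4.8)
The plan is to reduce the statement, via blow-ups, to a situation where the vector field is regular, and there to use Cauchy--Kowalewski directly. First I would invoke Proposition \ref{pro:non-branch-separates} to fix the shared path $(P_0,\dots,P_{i_0+1})$ of $\Gamma$ with $X$: the pull-back $\overline X_i$ is singular at $P_i$ for $0\le i\le i_0$ but $\overline X_{i_0+1}$ is regular at $P_{i_0+1}$. Performing the sequence of blow-ups at $P_0,\dots,P_{i_0}$, Lemma \ref{lem:deformation-of-strict-transform} tells me that the strict transform of $\Gamma_\epsilon$ is exactly the deformation of $\overline\Gamma_{i_0+1}$ by the regular vector field $\overline X_{i_0+1}$, and Lemma \ref{lem:pull-back-of-diffeos} tells me that this deformation is genuinely $\overline\psi_{-\epsilon}(\overline\Gamma_{i_0+1})$ for the flow of $\overline X_{i_0+1}$. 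Since $\overline X_{i_0+1}$ is a regular (non-singular) holomorphic vector field near $P_{i_0+1}$, its flow $\overline\psi_s$ is, for $s$ small, a germ of biholomorphism depending holomorphically on $s$ and equal to the identity at $s=0$. Hence $\overline\psi_{-\epsilon}(\overline\Gamma_{i_0+1})$ is, for $\epsilon$ small, a smooth deformation of the branch $\overline\Gamma_{i_0+1}$ by local biholomorphisms; in particular each $\overline\psi_{-\epsilon}(\overline\Gamma_{i_0+1})$ is an analytic branch analytically conjugate (by $\overline\psi_{-\epsilon}$ itself) to $\overline\Gamma_{i_0+1}$, and therefore $\Gamma_\epsilon$ is analytically conjugate to $\Gamma$ downstairs.

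Next I would check that $\Gamma_\epsilon$ shares the path $(P_0,\dots,P_{i_0})$ with $X$ for all small $\epsilon$, i.e. that the first $i_0+1$ infinitely near points of $\Gamma_\epsilon$ coincide with those of $\Gamma$ and are singular for the corresponding pull-backs of $X$. This is a continuity/rigidity argument: the point $P_i$ of the shared path is a \emph{true singularity} along the way or at worst an equilibrium point where a pull-back of $X$ vanishes, and being singular for the pull-back is a closed condition, while the location of the $i$-th infinitely near point of a branch depends only on a finite jet of its equation, which varies holomorphically (indeed, is constant modulo high order) in $\epsilon$ by the definition of the deformation and the fact that $X$ is singular (so the perturbing terms $\frac{\epsilon^n}{n!}X^n(f)$ do not lower the order). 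More precisely, the first $i_0$ blow-ups are dictated by a finite jet of $f$, and because $X$ has multiplicity $\geq 1$ the terms added to $f$ affect only sufficiently high jets at each stage, so upstairs at $P_{i_0+1}$ the strict transform $\overline{\Gamma_\epsilon}$ passes through $P_{i_0+1}$ for $\epsilon$ small; that $P_i$ remains singular for the relevant pull-back is automatic because that pull-back does not depend on $\epsilon$ at all. Thus $\Gamma$ and $\Gamma_\epsilon$ share at least $(P_0,\dots,P_{i_0+1})$.

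Finally I would show that for $\epsilon\neq0$ small the shared path with $X$ is \emph{exactly} $(P_0,\dots,P_{i_0+1})$, i.e. it does not extend further, so the last point $P_{i_0+1}$ (where $\overline X_{i_0+1}$ is regular) really is the last shared point of $X$ with $\Gamma_\epsilon$ too. Here the point is that the next infinitely near point $P_{i_0+2}'$ of $\Gamma_\epsilon$ is regular for the pull-back of $X$: indeed $\overline X_{i_0+1}$ is already regular at $P_{i_0+1}$, and the single further blow-up at a smooth point of a regular vector field produces a pull-back that is singular only along the exceptional divisor at isolated points, so one must rule out that $\overline{\Gamma_\epsilon}$ happens to pass through such a point. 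This is where the "$\epsilon$ small enough" hypothesis does its work, together with Lemma \ref{lem:shared-path-does-not-end-in-corner}: since $\overline X_{i_0+1}$ is regular and transverse to the divisor at $P_{i_0+1}$ (not a corner), its local flow moves $\overline\Gamma_{i_0+1}$ off $\overline\Gamma_{i_0+1}$ while keeping it a branch meeting the divisor at $P_{i_0+1}$, and the direction it determines at $P_{i_0+1}$ after blowing up varies continuously in $\epsilon$, equalling the "bad" direction only for $\epsilon$ in a proper analytic subset; shrinking the neighbourhood of $\epsilon=0$ excludes it. The main obstacle I expect is precisely this last step: making rigorous that for $\epsilon\ne0$ small the strict transform of $\Gamma_\epsilon$ separates from $X$ at $P_{i_0+1}$ rather than continuing to follow it, which requires controlling the first jet of $\overline\psi_{-\epsilon}$ at $P_{i_0+1}$ transverse to $\overline\Gamma_{i_0+1}$, and here the regularity (non-vanishing) of $\overline X_{i_0+1}$ at $P_{i_0+1}$ is exactly what guarantees a genuine first-order displacement in $\epsilon$.
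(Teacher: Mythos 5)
Your overall strategy (blow up along the shared path and exploit the regularity of the pull-back at the last point, via Lemmas \ref{lem:deformation-of-strict-transform} and \ref{lem:pull-back-of-diffeos}) is the paper's, but your second and third paragraphs get the key geometric mechanism wrong, and in doing so you assert the negation of the very thing to be proved. You claim that ``upstairs at $P_{i_0+1}$ the strict transform $\overline{\Gamma_\epsilon}$ passes through $P_{i_0+1}$'' and that the branches ``share at least $(P_0,\dots,P_{i_0+1})$'', locating the separation one level higher, in a change of tangent direction at $P_{i_0+1}$ detected after a further blow-up. This is false. The strict transform of $\Gamma_\epsilon$ at the last stage is $\overline{\psi}_{-\epsilon}(\overline{\Gamma}_{i_0+1})$; the exceptional divisor $E$ is invariant for $\overline{X}_{i_0+1}$, so the field is \emph{tangent} to $E$ at $P_{i_0+1}$ (not ``transverse to the divisor'' as you write), and since $\overline{X}_{i_0+1}(P_{i_0+1})\neq 0$ its restriction to $E$ is a non-vanishing vector field near $P_{i_0+1}$. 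Hence the flow translates $P_{i_0+1}$ along $E$, and $\overline{\psi}_{-\epsilon}(\overline{\Gamma}_{i_0+1})$ meets $E$ at $\overline{\psi}_{-\epsilon}(P_{i_0+1})\neq P_{i_0+1}$ for $0<|\epsilon|\ll 1$. That single observation is the entire proof of the final clause; no extra blow-up at $P_{i_0+1}$ is needed (nor is one available in this framework: the pull-back of a vector field is only defined when the centre is a singular point of it).

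Relatedly, your jet-counting argument for why the earlier points are shared proves too much: if the added terms ``affected only sufficiently high jets at each stage'', the last point would not move either. The correct reason $P_0,\dots,P_{i_0}$ are shared is that each $P_i$ is a singular point of $\overline{X}_i$, hence a fixed point of its flow, so $\overline{\psi}^{(i)}_{-\epsilon}(\overline{\Gamma}_i)$ still meets the divisor exactly at $P_i$. (Also, the analytic conjugacy of $\Gamma$ and $\Gamma_\epsilon$ needs no blow-up at all: $\Gamma_\epsilon=\psi_{-\epsilon}(\Gamma)$ and $\psi_{-\epsilon}$ is already a germ of biholomorphism fixing the origin, since $X$ is singular there.)
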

\begin{proof}
  Let $(P_i)_{i=0}^{N}$ be the path shared by $X$ and $\Gamma$. By Lemma
  \ref{lem:shared-path-does-not-end-in-corner}, $P_N$ is not a corner of the
  exceptional divisor. By definition, the vector field $\overline{X}$ is
  non-singular at $P_N$ and it is tangent to the exceptional divisor
  $E=\pi^{-1}(0,0)$. This implies that $\ce$ meets the exceptional divisor away
  from $P_N$ for $\epsilon$ small enough.
\end{proof}


For the sake of clarity let us recall the definition of intersection
multiplicity.
\begin{definition}\label{def:intersection-number}
  Let $\Delta\equiv(g(x,y)=0)$ be an analytic curve in $\cc$ which does not
  contain $\Gamma$. The \emph{intersection multiplicity}
   ($\Gamma \cap \Delta)_{(0,0)}$ (also denoted by $(f,g)_{(0,0)}$) 
  of $\Gamma$ and $\Delta$ at $(0,0)$ is the (finite) number
  \begin{equation*}
    (\Gamma \cap \Delta)_{(0,0)} = \dim_{\mathbb{C}}\mathbb{C}\{x,y\}/(f,g).
  \end{equation*}
  In the case we are dealing with, where  $\Gamma$ is a branch,
  this number can be computed as
  \begin{equation*}
    (\Gamma \cap \Delta)_{(0,0)} = \ord_t(g(\varphi(t))
  \end{equation*}
  where $\varphi(t)$ is any irreducible Puiseux parametrization of $\Gamma$. The
  sub-index $(0,0)$ is usually omitted.

\end{definition}

A direct consequence of Proposition \ref{pro:deformations-share} is
\begin{corollary}
  \label{cor:intersection-gamma-and-gammae}
  Let $\Gamma\equiv(f=0)$ be a (possibly singular) analytic branch at $\cc$ 
  that is not invariant by 
  a singular analytic vector field $X$. If $n_0, n_1, \dots, n_N$ is the
  sequence of multiplicities of $\Gamma$ at the points of the path it shares
  with $X$, then the intersection multiplicity of $\Gamma$ and $\ce$ is given
  by:
  \begin{equation*}
    (\Gamma, \ce)_{(0,0)} = \sum_{i=0}^{N-1}  n_i^2 =
    \min \{ (X^{n}(f), f)_{(0,0)} : n \geq 1 \}
  \end{equation*}
  for $0<\abs{\epsilon}\ll 1$.
\end{corollary}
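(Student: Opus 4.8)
The plan is to deduce both equalities in the statement from Proposition~\ref{pro:deformations-share} together with the classical relationship between intersection multiplicity and the self-intersection numbers of strict transforms along a sequence of blow-ups. Let $(P_0,\dots,P_N)$ be the path shared by $X$ and $\Gamma$, with multiplicities $n_0,\dots,n_N$ at these points. By Proposition~\ref{pro:deformations-share}, for $0<|\epsilon|\ll 1$ the curve $\ce$ is analytically conjugate to $\Gamma$ (so in particular they have the same multiplicity sequence along their own resolutions) and $\ce$ shares with $X$ exactly the points $P_0,\dots,P_{N-1}$, separating from the path of $\Gamma$ at $P_N$: the strict transform $\overline{\ce}$ passes through $P_i$ for $0\le i\le N-1$ but meets the exceptional divisor of the $N$-th blow-up at a point other than $P_N$.

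First I would compute $(\Gamma,\ce)_{(0,0)}$ via the standard formula that the intersection multiplicity equals the sum, over all infinitely near points $Q$, of the product of the multiplicities at $Q$ of the two strict transforms: $(\Gamma,\ce)_{(0,0)} = \sum_Q m_Q(\Gamma)\, m_Q(\ce)$. Because $\Gamma$ and $\ce$ are both smooth at $P_i$ for $i<N$ exactly when $n_i=1$, and in general have the \emph{same} strict transform germ through $P_0,\dots,P_{N-1}$ (they literally share the path and, being conjugate with the same multiplicities, agree at each $P_i$ up to the relevant order — here one must be slightly careful: what one really needs is that their strict transforms coincide as germs at each $P_i$ for $i\le N-1$, which follows from Lemma~\ref{lem:deformation-of-strict-transform} applied inductively, since $\overline{\ce}_i = \overline{\Gamma_i}_{\,\epsilon}$ is a deformation of $\overline{\Gamma}_i$ and hence for $\epsilon$ small passes through the same point $P_i$ with the same multiplicity $n_i$), the contribution at $P_i$ is $n_i^2$ for $0\le i\le N-1$. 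At $P_N$ and beyond, $\overline{\ce}$ has left the path, so $\overline{\ce}$ and $\overline{\Gamma}$ no longer pass through a common infinitely near point; thus those points contribute $0$. This gives $(\Gamma,\ce)_{(0,0)} = \sum_{i=0}^{N-1} n_i^2$.

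Next I would identify this with $\min\{(X^n(f),f)_{(0,0)} : n\ge 1\}$. From Equation~(\ref{equ:taylor}), the local equation of $\ce$ is $f + \sum_{n\ge 1} \frac{\epsilon^n}{n!} X^n(f)$, so $(\Gamma,\ce)_{(0,0)} = \ord_t\!\big(\sum_{n\ge 1}\frac{\epsilon^n}{n!}X^n(f)(\varphi(t))\big)$ where $\varphi$ is a Puiseux parametrization of $\Gamma$. For $\epsilon$ in a punctured neighbourhood of $0$ the order of this entire-in-$\epsilon$ power series in $t$ is governed by the lowest-order term: writing $d_n := \ord_t X^n(f)(\varphi(t)) = (X^n(f),f)_{(0,0)}$, one has $\ord_t(\sum_{n\ge1}\frac{\epsilon^n}{n!}X^n(f)(\varphi(t))) = \min_{n\ge 1} d_n$ for all $\epsilon$ outside a proper analytic (hence discrete) subset, and in fact for \emph{all} sufficiently small $\epsilon\neq 0$: the coefficient of $t^{\min_n d_n}$ is a non-zero entire function of $\epsilon$ vanishing to finite order at $0$, so it is non-zero for $0<|\epsilon|\ll 1$. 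Combining the two computations yields the displayed chain of equalities.

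The main obstacle I anticipate is the bookkeeping in the first computation: rigorously justifying that $\overline{\ce}$ and $\overline{\Gamma}$ pass through precisely the same infinitely near points $P_0,\dots,P_{N-1}$ \emph{with equal multiplicities}, and through no common infinitely near point at level $\ge N$. The first part is exactly Lemma~\ref{lem:deformation-of-strict-transform} iterated along the shared path (so $\overline{\ce}_i$ is the deformation of $\overline{\Gamma}_i$ by the pull-back $\overline{X}_i$, and a deformation of a curve through a point, by a singular vector field, still passes through that point with the same multiplicity for small $\epsilon$); the second part is the content of Proposition~\ref{pro:deformations-share}, which guarantees separation at $P_N$, combined with the observation that once the two strict transforms land on the same exceptional divisor at distinct points they can share no further infinitely near point. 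I would make sure the inductive step handles correctly the possibility that $P_i$ is a singular point of $\overline{\Gamma}_i$ (so $n_i>1$): there the equality of multiplicities of $\overline{\Gamma}_i$ and its deformation still holds because adding higher-order $\epsilon$-terms to the local equation does not lower the multiplicity for $\epsilon$ small. With that in hand, the additivity formula for intersection multiplicity over infinitely near points closes the argument.
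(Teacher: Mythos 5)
Your proof is correct and follows essentially the same route as the paper: Noether's formula over the shared infinitely near points $P_0,\dots,P_{N-1}$ (with the separation at $P_N$ from Proposition~\ref{pro:deformations-share}) for the first equality, and the isolated-zeros argument applied to the coefficient of $t^{\min_n d_n}$, viewed as a non-zero entire function of $\epsilon$, for the second. (One phrasing slip: the strict transforms of $\Gamma$ and $\Gamma_\epsilon$ do \emph{not} coincide as germs at the $P_i$ --- what Noether's formula needs, and what you in fact use, is only that both pass through each $P_i$ with multiplicity $n_i$.)
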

\begin{proof}
  As  the sequence of infinitely near points
  shared by $\Gamma$ and $\ce$ is the shared path between $X$ and $\Gamma$
  except the last point (for $\epsilon\ll 1$), Noether's formula (see, for
  example \cite{Casas}) gives
  \begin{equation*}
    (\Gamma, \ce)_{0,0} = \sum_{i=0}^{N-1} n_i n_{P_i}  (\ce)
  \end{equation*}
  where $n_{P_i}(\ce)$ denotes the multiplicity of the strict transform of $\ce$
  at $P_i$. Since $\Gamma$ and $\ce$ are topologically equivalent (as they are
  analytically conjugated), their sequence of multiplicities at their infinitely
  near points are the same: $n_{P_i}(\ce)=n_i$  and the first equality
  follows. Notice that we need set $\epsilon\ll 1$ because $\Gamma$ and $\ce$
  might share more points for $\epsilon$ not small enough. For the second
  equality, let $k = \min \{ (X^{n}(f), f)_{(0,0)} : n \geq 1 \}$. Certainly,
  $k\leq (\Gamma, \Gamma_{\epsilon})_{(0,0)}$. Notice that
  \begin{equation*}
    (\Gamma, \Gamma_{\epsilon})_{(0,0)} =
    \ord_{t} \left(\sum_{r\in T_k} \frac{\epsilon^r}{r!}
    a_r t^k + \hot\right)
  \end{equation*}
  where $T_k=\{r \in \mathbb{N} : (f,X^r(f))_{(0,0)}=k \} $ and $a_r$ is the
  term of order $k$ in $X^r(f)(\varphi(t))$, for a parametrization $\varphi(t)$
  of $\Gamma \equiv (f=0)$. If $(\Gamma,\Gamma_{\epsilon})_{(0,0)}$ were
  strictly greater than $k$ for some $\epsilon \neq 0$ in every pointed 
  neighborhood of $0$, then
  \begin{equation*}
    \sum_{r\in T_k}\frac{a_r}{r!}\epsilon^{r}=0
  \end{equation*}
  for all $\epsilon \in {\mathbb C}$ by the isolated zeros principle, so that
  $a_r=0$ for all $r$, against the assumption.
\end{proof}
\begin{definition}
  \label{def:tangency-order-brunella}
  The \emph{tangency order} between $X$ and $\Gamma$ is defined as
  $\tang_{(0,0)}(X, \Gamma) =(X(f),f)_{(0,0)}$ (see
  \cite{brunella2015birational}) .
\end{definition}  
\begin{lemma}
  We have
  \begin{equation*}
    \tang_{(0,0)}(X, \Gamma) = (\Gamma, \ce)
  \end{equation*}
  for $0<\epsilon\ll 1$ when $\Gamma$ is non-singular.
\end{lemma}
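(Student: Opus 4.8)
The plan is to deduce the Lemma from Corollary \ref{cor:intersection-gamma-and-gammae}, which already identifies $(\Gamma, \Gamma_\epsilon)$ with $\min\{(X^n(f), f)_{(0,0)} : n \geq 1\}$ for $0 < |\epsilon| \ll 1$ (we may assume $\Gamma$ is not invariant by $X$, for otherwise $\Gamma_\epsilon = \Gamma$ and neither side is a finite number). Thus everything reduces to showing that, for a \emph{smooth} branch $\Gamma$, this minimum is attained at $n = 1$, that is $(X^n(f), f)_{(0,0)} \geq (X(f), f)_{(0,0)}$ for all $n \geq 1$. It is worth stressing that smoothness is essential: for $\Gamma \equiv (y^2 - x^3 = 0)$ and $X = x\,\partial/\partial y$ one has $(X^2(f), f)_{(0,0)} = 4 < 5 = (X(f), f)_{(0,0)}$, as noted earlier in the text, so the minimum may be attained beyond $n = 1$ when $\Gamma$ is singular.

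To use smoothness I would pick coordinates so that $\Gamma \equiv (y = 0)$, take $f = y$ and the parametrization $\varphi(t) = (t, 0)$; since $\tang_{(0,0)}(X, \Gamma)$ and $(\Gamma, \Gamma_\epsilon)$ do not depend on the choice of reduced equation of $\Gamma$ nor on the coordinates, this is harmless. Writing $X = A\,\partial/\partial x + B\,\partial/\partial y$ with $A, B \in \mathfrak{m}_0$ (since $X$ is singular), and denoting by $\bar g(t) := g(t, 0)$ the restriction of a germ $g$ to $\Gamma$, the one computational input is that on a smooth branch restriction commutes with the $x$-derivative up to reparametrisation: $\overline{\partial g/\partial x}(t) = \tfrac{d}{dt}\bar g(t)$ by the chain rule. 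Applying the bar operator to $X(g) = A\,\partial g/\partial x + B\,\partial g/\partial y$ then yields the recursion $\overline{X(g)} = \bar A\,\tfrac{d}{dt}\bar g + \bar B\cdot \overline{\partial g/\partial y}$.

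From here the argument is a short induction. Put $c_n(t) := \overline{X^n(f)}(t)$ and $m := \ord_t \bar B = (X(f), f)_{(0,0)}$, which is finite and satisfies $m \geq 1$ because $B \in \mathfrak{m}_0$. Then $c_1 = \bar B$, and $c_{n+1} = \bar A\,\tfrac{d}{dt}c_n + \bar B\cdot\overline{\partial (X^n(f))/\partial y}$. Since $\ord_t \bar A \geq 1$, differentiation drops the order of $c_n$ by at most one but multiplication by $\bar A$ restores it, while the second summand is a multiple of $\bar B$; hence $\ord_t c_n \geq m$ for every $n \geq 1$ by induction. Consequently $\min\{(X^n(f), f)_{(0,0)} : n \geq 1\} = \ord_t c_1 = m = \tang_{(0,0)}(X, \Gamma)$, and Corollary \ref{cor:intersection-gamma-and-gammae} gives $(\Gamma, \Gamma_\epsilon) = \tang_{(0,0)}(X, \Gamma)$ for $0 < |\epsilon| \ll 1$, as desired.

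I do not expect a genuine obstacle here. The only point requiring care is recognising that the extra order of vanishing of the coefficient $\bar A$, forced by the singularity of $X$, precisely cancels the order lost to the $\tfrac{d}{dt}$ in the recursion; this is exactly the mechanism that breaks down for a singular branch, whose parametrization involves $t^n$ with $n > 1$ and where the analogue of the identity $\overline{\partial g/\partial x} = \tfrac{d}{dt}\bar g$ carries an extra factor that destroys the clean order bookkeeping.
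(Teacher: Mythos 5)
Your proof is correct and follows essentially the same route as the paper's: reduce to $f=y$, observe that $\tang_{(0,0)}(X,\Gamma)=\ord_t B(t,0)$, show by induction that $\ord_t X^n(y)(t,0)\geq \ord_t B(t,0)$ for all $n\geq 1$ because the $t$-derivative loses at most one order while the factor $\bar A$ with $A\in\mathfrak{m}_0$ restores it, and then invoke Corollary \ref{cor:intersection-gamma-and-gammae}. Your write-up in fact makes explicit the inductive recursion that the paper only sketches, and correctly uses $A(0,0)=0$ (forced by the singularity of $X$) where the paper's text contains the typo ``$A(0,0)\neq 0$''.
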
  
\begin{proof}
  We can assume that $\Gamma$ is not invariant by $X$, since otherwise 
  $\tang_{(0,0)}(X, \Gamma) = \infty$ and $\Gamma=\Gamma_{\epsilon}$ for any 
  $\epsilon \in {\mathbb C}$.
  As $\Gamma$ is non-singular and it is not invariant for $X$, after a change of
  coordinates, we may assume $f=y$ and $X(y)\not\in(y)$. Writing
  \begin{equation*}
    X = A(x,y)\frac{\partial }{\partial x} + B(x,y)\frac{\partial }{\partial y}
  \end{equation*}
  we obtain $B(x,y)=a(x^k + \hot) + y(\overline{B}(x,y))$ for some $k>0$ and
  $a\neq 0$. As $A(0,0)\neq 0$, an easy inductive argument implies that
  \begin{equation*}
    \ord_x(X^k(y)(x,0)) \geq k
  \end{equation*}
  which is what we need, by Corollary \ref{cor:intersection-gamma-and-gammae}.
\end{proof}
\subsection{Vector fields, differential forms and curves}
Consider now a branch $\Gamma$ which, for the sake of simplicity, we assume
tangent to the $OX$ axis, so $\Gamma\equiv (f(x,y)=0)$ with $f(x,y)=y^n +
\hot$ It is well known that $\Gamma$ admits what is called an \emph{irreducible
  Puiseux parametrization}
\begin{equation}\label{eq:irred-puiseux}
  \varphi(t) \equiv (x(t),y(t)) = \left( t^n, \sum\limits_{i\geq n} a_it^i \right)
\end{equation}
where $\varphi(t)$ is not of the form $\varphi(t^k)$ for any $k\geq 2$; the
greatest common divisor of $n$ and the exponents appearing in $y(t)$ is $1$. Up
to replacing $\Gamma$ with its conjugate by some local diffeomorphism of the
form $(x,y) \mapsto (x, y+ a(x))$, one easily deduces that there exists what we
shall call a \emph{prepared Puiseux parametrization}:
\begin{definition}\label{def:prepared-puiseux-expansion}
  A \emph{prepared Puiseux parametrization} of $\Gamma$ is an irreducible
  Puiseux parametrization such that $m>n$ and $n\nmid m$.
\end{definition}
Before proceeding any further, let us recall some definitions:

\begin{definition}\label{def:semigroup and conductor}
  The \emph{semigroup} $S_{\Gamma}$ (or simply $S$) associated to $\Gamma$ is
  the set
  \begin{equation*}
    S_{\Gamma} = \left\{ (\Gamma\cap \Delta)_{(0,0)} \,:\,
      \Delta\equiv(f(x,y)=0), f(x,y)\in \mathbb{C}\{x,y\},
      \Gamma\not\subset \Delta \right\}.
  \end{equation*}
  It is a sub-semigroup of $\mathbb{N}$. It is well known (due to the fact that
  $\Gamma$ is a branch) that there is $c\in S_{\Gamma}$ such that
  $p\geq c$ implies $p\in S_{\Gamma}$.
  The least $c$ satisfying this property is called
  the \emph{conductor} of $\Gamma$.
\end{definition}

Given a differential form $\omega\in \Omega^1_{\mathcal{O}}$, say
$\omega=a(x,y)dx + b(x,y)dy$, the \emph{contact} of $\omega$ with $\Gamma$ is
defined (as in \cite{Zariski4}) as
\begin{equation*}
  \upsilon_{\Gamma}(\omega) = \ord_t\big(a(x(t),y(t))\dot{x}(t) +
  b(x(t),y(t))\dot{y}(t)\big) + 1,
\end{equation*}
which does not depend on the parametrization of $\Gamma$. On the other hand,
given a vector field $X$, say $X=A(x,y)\pt{x} + B(x,y)\pt{y}$,
let us calculate $\mathrm{tang}_{(0,0)} (X, \Gamma)$.
We have $\ptt{f}{y}\neq 0$ and (certainly) $\dot{x}(t)\neq 0$.
Since $f(x(t),y(t))=0$, we deduce
\begin{equation}\label{eq:partial-f-wrt-x}
  \ptt{f}{x}\dot{x}(t) + \ptt{f}{y}\dot{y}(t) = 0,
\end{equation}
which can be rewritten as
\begin{equation*}
  \ptt{f}{x} = -\frac{\dot{y}(t)}{\dot{x}(t)}\ptt{f}{y},
\end{equation*}
so that, when computing the tangency order $\mathrm{tang}_{(0,0)} (X, \Gamma)$,
one gets
\begin{equation*}
  X(f)(x(t),y(t)) = A(x(t),y(t))\ptt{f}{x}(x(t),y(t)) +
  B(x(t),y(t))\ptt{f}{y}(x(t),y(t)),
\end{equation*}
which substituting \eqref{eq:partial-f-wrt-x}, gives
\begin{equation*}
  X(f)(x(t),y(t))\dot{x}(t) = \ptt{f}{y}(x(t),y(t))
  \big( -A(x(t),y(t))\dot{y}(t) + B(x(t),y(t))\dot{x}(t) \big),
\end{equation*}
that leads to the following valuative formula:
\begin{equation*}
  \ord_t\big(X(f)(x(t),y(t))\big) + \ord_t(\dot{x}(t)) =
  \ord_t \left( \ptt{f}{y}(x(t),y(t)) \right) +
  \upsilon_{\Gamma} (\overline{\omega}) - 1,
\end{equation*}
for $\overline{\omega} = B(x,y)dx - A(x,y)dy$. It is well-known (see, for
example \cite{Zariski4}) that
\begin{equation*}
  \ord_{t} \left( \ptt{f}{y}(x(t),y(t)) \right) = c + n - 1
\end{equation*}
where $c$ is the conductor of $S_{\Gamma}$. Hence, we get
\begin{equation*}
  \mathrm{tang}_{(0,0)} (X, \Gamma)+ (n-1) = c + (n-1)
  + \upsilon_{\Gamma}(\overline{\omega}) - 1,
\end{equation*}
that is:
\begin{equation*}
   \mathrm{tang}_{(0,0)} (X, \Gamma)=
  \upsilon_{\Gamma}(\overline{\omega}) + c - 1.
\end{equation*}
Thus, we might define
$\upsilon_{\Gamma}(X) \coloneqq   \mathrm{tang}_{(0,0)} (X, \Gamma)- c + 1$ and
obtain, in a natural way:
\begin{equation*}
  \upsilon_{\Gamma}(X) = \upsilon_{\Gamma}(\overline{\omega}).
\end{equation*}
Dually, we obtain the following formula for the conductor:
\begin{corollary}\label{cor:conductor-from-vector-and-form}
  Let $\Gamma\equiv(f=0)$ be a singular branch at $\cc$ and
  $X=A(x,y)\pt{x} + B(x,y)\pt{y}$ any singular vector field. If
  $\omega = -B(x,y)dx+A(x,y)dy$  then 
  \begin{equation*}
    c =   \mathrm{tang}_{(0,0)} (X, \Gamma) - \upsilon_{\Gamma}(w) + 1.
  \end{equation*}
\end{corollary}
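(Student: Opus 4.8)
The plan is to obtain the statement as a direct rearrangement of the valuative identity established in the paragraphs immediately preceding the corollary, once we record the elementary relation $\upsilon_{\Gamma}(\omega)=\upsilon_{\Gamma}(\overline{\omega})$ between the form $\omega=-B(x,y)\,dx+A(x,y)\,dy$ appearing in the statement and the form $\overline{\omega}=B(x,y)\,dx-A(x,y)\,dy$ used in the computation above: the two differ only by an overall sign, and $\upsilon_{\Gamma}$ depends on a $1$-form only through the $t$-order of its pull-back, which is insensitive to sign. We may assume $\Gamma$ is not invariant by $X$: otherwise $X(f)\in(f)$, so $X(f)(x(t),y(t))\equiv 0$ and $B(x(t),y(t))\dot{x}(t)-A(x(t),y(t))\dot{y}(t)\equiv 0$, whence both $\mathrm{tang}_{(0,0)}(X,\Gamma)$ and $\upsilon_{\Gamma}(\omega)$ are infinite and the formula carries no content; so only the non-invariant case requires an argument, and in that case all the orders below are finite.

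First I would reduce, after a linear change of coordinates, to the case where $\Gamma$ is tangent to the $OX$-axis, with an irreducible Puiseux parametrization $\varphi(t)=(x(t),y(t))=(t^n,\sum_{i\geq n}a_it^i)$; in particular $\dot{x}(t)=nt^{n-1}$, so $\ord_t(\dot{x}(t))=n-1$. Next I would invoke the two ingredients already recorded above: the identity derived from $f(x(t),y(t))\equiv 0$,
\[
  \ord_t\big(X(f)(x(t),y(t))\big)+\ord_t(\dot{x}(t))
  =\ord_t\!\left(\ptt{f}{y}(x(t),y(t))\right)+\upsilon_{\Gamma}(\overline{\omega})-1,
\]
together with Zariski's formula $\ord_t\big(\ptt{f}{y}(x(t),y(t))\big)=c+n-1$. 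Since $\ord_t\big(X(f)(x(t),y(t))\big)=(X(f),f)_{(0,0)}=\mathrm{tang}_{(0,0)}(X,\Gamma)$ by definition of the tangency order, substituting these and $\ord_t(\dot{x}(t))=n-1$ gives
\[
  \mathrm{tang}_{(0,0)}(X,\Gamma)+(n-1)=(c+n-1)+\upsilon_{\Gamma}(\overline{\omega})-1,
\]
that is $\mathrm{tang}_{(0,0)}(X,\Gamma)=c+\upsilon_{\Gamma}(\overline{\omega})-1$. Solving for $c$ and replacing $\upsilon_{\Gamma}(\overline{\omega})$ by $\upsilon_{\Gamma}(\omega)$ yields $c=\mathrm{tang}_{(0,0)}(X,\Gamma)-\upsilon_{\Gamma}(\omega)+1$, as claimed. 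I would also remark that $\upsilon_{\Gamma}$, the intersection multiplicity and $c$ do not depend on the chosen parametrization, so the resulting formula is intrinsic.

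There is no genuine obstacle here: the whole content is the valuative identity coming from differentiating $f(x(t),y(t))\equiv 0$, which has already been checked, combined with the normalisation $x(t)=t^n$ and Zariski's computation of $\ord_t(\partial f/\partial y)$. The only points needing a little care are the bookkeeping of the invariant case — disposed of above as vacuous for this statement — and the observation that the linear change of coordinates bringing $\Gamma$ tangent to the $OX$-axis leaves both sides unchanged, which is immediate since $\mathrm{tang}_{(0,0)}$, $\upsilon_{\Gamma}$ and $c$ are all invariant under local biholomorphism.
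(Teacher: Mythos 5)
Your proposal is correct and follows essentially the same route as the paper: the corollary is obtained there as a direct rearrangement of the valuative identity $\mathrm{tang}_{(0,0)}(X,\Gamma)=\upsilon_{\Gamma}(\overline{\omega})+c-1$ derived in the immediately preceding paragraphs from differentiating $f(x(t),y(t))\equiv 0$ and Zariski's formula $\ord_t\bigl(\ptt{f}{y}(x(t),y(t))\bigr)=c+n-1$. Your two additional remarks --- that $\omega=-\overline{\omega}$ has the same contact since $\upsilon_{\Gamma}$ only sees the $t$-order of the pull-back, and that the invariant case is vacuous because both sides become infinite --- are correct points of care that the paper leaves implicit.
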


The next result follows from a simple (classical) computation:
\begin{lemma}\label{lem:upsilon-omega-and-upsilon-transform}
  Let $\omega$ be a singular differential form in $\cc$ and
  $\pi:\mathcal{X}\rightarrow \cc$ be the blow-up of $\cc$ with centre
  $(0,0)$ with equations $x=\overline{x}, y=\overline{x}\overline{y}$. Let
  $\Gamma$ be a branch (singular or not) at $\cc$
  whose tangent cone is not $x=0$. Consider the differential
  form in $\mathcal{X}$ given by
  $\overline{\omega} = (\pi^{\ast}\omega)/\overline{x}$ (which is the dual form
  of the pull-back of $X$ to $\mathcal{X}$) and the strict transform
  $\overline{\Gamma}$ of $\Gamma$, whose intersection with $\pi^{-1}(0,0)$ is
  $P$. If $n$ is the multiplicity of $\Gamma$ at $(0,0)$, then
  \begin{equation*}
    \upsilon_{\Gamma}(\omega) =
    \upsilon_{\overline{\Gamma}}(\overline{\omega}) + n.
  \end{equation*}
\end{lemma}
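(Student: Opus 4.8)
The statement to prove is Lemma \ref{lem:upsilon-omega-and-upsilon-transform}: if $\omega = a\,dx + b\,dy$ is a singular form on $\cc$, $\pi$ the blow-up $x = \overline x$, $y = \overline x \overline y$, $\overline\omega = (\pi^*\omega)/\overline x$, and $\Gamma$ a branch of multiplicity $n$ with tangent cone distinct from $x=0$, then $\upsilon_\Gamma(\omega) = \upsilon_{\overline\Gamma}(\overline\omega) + n$. The plan is a direct computation with Puiseux parametrizations. Since the tangent cone of $\Gamma$ is not $x = 0$, we may take an irreducible Puiseux parametrization $\varphi(t) = (x(t), y(t)) = (t^n, \sum_{i\ge n} a_i t^i)$. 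Its strict transform $\overline\Gamma$ through $P = \varphi$-image meeting $\pi^{-1}(0,0)$ has parametrization $\overline\varphi(t) = (\overline x(t), \overline y(t)) = (t^n, y(t)/t^n) = (t^n, \sum_{i\ge n} a_i t^{i-n})$, using the \emph{same} uniformizer $t$ (this is the crucial point: blow-up does not change the multiplicity of the parametrizing map in the variable $t$, only shifts the second coordinate).

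**Key steps.** First I would write out the defining expression: by definition $\upsilon_\Gamma(\omega) = \ord_t\big(a(\varphi(t))\dot x(t) + b(\varphi(t))\dot y(t)\big) + 1$. Next, pull back: $\pi^*\omega = a(\overline x, \overline x\,\overline y)\,d\overline x + b(\overline x, \overline x\,\overline y)\big(\overline y\,d\overline x + \overline x\,d\overline y\big)$, so that
\begin{equation*}
  \overline\omega = \frac{\pi^*\omega}{\overline x}
  = \Big(\tfrac{1}{\overline x}\big(a(\overline x,\overline x\overline y) + \overline y\, b(\overline x,\overline x\overline y)\big)\Big)d\overline x
  + b(\overline x,\overline x\overline y)\,d\overline y.
\end{equation*}
Then evaluate $\overline\omega$ along $\overline\varphi(t)$: note $\overline x(t) = t^n = x(t)$ and $\overline x(t)\overline y(t) = y(t)$, so $a(\overline x(t),\overline x(t)\overline y(t)) = a(\varphi(t))$ and likewise for $b$. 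Also $\dot{\overline x}(t) = \dot x(t) = n t^{n-1}$, and differentiating $\overline y = y/t^n$ gives $\dot{\overline y}(t) = \dot y(t)/t^n - n\, y(t)/t^{n+1}$, equivalently $t^n\dot{\overline y}(t) = \dot y(t) - n\, y(t)/t = \dot y(t) - \overline y(t)\dot x(t)$. Plugging in, the coefficient term combines so that
\begin{equation*}
  a(\varphi)\tfrac{\dot x}{t^n} + \overline y\, b(\varphi)\tfrac{\dot x}{t^n} + b(\varphi)\dot{\overline y}
  = \tfrac{1}{t^n}\Big(a(\varphi)\dot x + \overline y\, b(\varphi)\dot x + b(\varphi)(\dot y - \overline y\dot x)\Big)
  = \tfrac{1}{t^n}\big(a(\varphi)\dot x + b(\varphi)\dot y\big),
\end{equation*}
the $\overline y\, b(\varphi)\dot x$ terms cancelling exactly. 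Hence $a(\varphi)\ddots$ the integrand for $\upsilon_{\overline\Gamma}(\overline\omega)$ equals $t^{-n}$ times the integrand for $\upsilon_\Gamma(\omega)$, so $\ord_t$ drops by $n$, and adding back the $+1$ in each definition gives $\upsilon_{\overline\Gamma}(\overline\omega) = \upsilon_\Gamma(\omega) - n$, as claimed.

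**Main obstacle.** The computation itself is routine once the cancellation is spotted; the one genuine subtlety is ensuring that $t$ is still an admissible uniformizer for $\overline\Gamma$ — i.e.\ that $\overline\varphi(t) = (t^n, y(t)/t^n)$ really is an irreducible parametrization of the strict transform and not of some multiple cover. This is where the hypothesis that the tangent cone is not $x=0$ is used (it guarantees the first coordinate is $t^n$ with the honest multiplicity $n$), together with irreducibility of the original Puiseux series, so that $\gcd$ of $n$ with the exponents of $y(t)$ is $1$ and this is unchanged upon subtracting $n$ from each exponent. I would state this identification as a preliminary sentence, citing \cite{Casas} or \cite{Wall} for the standard fact that blowing up the base point transforms $(t^n, y(t))$ into $(t^n, y(t)/t^n)$, and then the valuative identity follows from the displayed algebra above. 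One should also remark that the parenthetical claim — that $\overline\omega$ is the dual form of the pull-back $\overline X$ — is exactly the content of Definition \ref{def:pull-back-of-vector-field} together with the explicit chart formula in the Remark following it, so no separate argument is needed there.
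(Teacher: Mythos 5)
Your computation is correct and is exactly the ``simple (classical) computation'' the paper invokes without writing out: the paper gives no proof of this lemma, and your pull-back-and-cancel argument with the primitive parametrization $(t^n, y(t)/t^n)$ of $\overline{\Gamma}$, including the check that primitivity is preserved so $\ord_t$ computes the right valuation, is the intended route. The only blemish is the stray ``$a(\varphi)\ddots$'' fragment in the final display's following sentence, which should be deleted.
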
 
\begin{corollary} \label{cor:kahlercontact-exponent-and-blow-up} 
  Let $\Gamma$ be an analytic branch at $\cc$ 
  that is not invariant by 
  a singular analytic vector field $X=A(x,y)\pt{x} + B(x,y)\pt{y}$.  
  Let $\omega = -B(x,y)dx+A(x,y)dy$ be the ``dual'' differential form
  of $X$. Then
   \begin{equation*}
    \upsilon_{\Gamma}(\omega) = n_{N-1} + \sum_{j=0}^{N-1}n_j
  \end{equation*}
  where $n_0, n_1, \dots, n_N$ is the
  sequence of multiplicities of $\Gamma$ at the points of the path it shares
  with $X$.  It depends only on $\Gamma$ and $N$.
\end{corollary}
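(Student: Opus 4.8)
The plan is to push the contact $\upsilon_{\Gamma}(\omega)$ down the shared path using Lemma \ref{lem:upsilon-omega-and-upsilon-transform} repeatedly, and then to compute the one remaining contribution by a direct order estimate at the last point of the path, where the pull-back of $X$ has become regular. For this, set up notation as follows: let $(P_0,\dots,P_N)$ be the path shared by $\Gamma$ and $X$ (Definition \ref{def:shared-path}); let $\pi_j\colon\mathcal{X}_{j+1}\to\mathcal{X}_j$ be the blow-up with centre $P_j$; let $\overline{\Gamma}_j$ and $\overline{X}_j$ be the strict transform of $\Gamma$ and the pull-back of $X$ to $\mathcal{X}_j$, and let $\overline{\omega}_j$ be the dual form of $\overline{X}_j$ (so $\overline{\Gamma}_0=\Gamma$, $\overline{X}_0=X$, $\overline{\omega}_0=\omega$); and write $E_j\subset\mathcal{X}_j$ for the exceptional divisor created by $\pi_{j-1}$. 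By the definition of the shared path, $\overline{X}_j$ is singular at $P_j$ for $0\le j\le N-1$ and regular at $P_N$, and by Lemma \ref{lem:shared-path-does-not-end-in-corner} the point $P_N$ is not a corner, hence lies on $E_N$ and on no other exceptional component.

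Step 1 (iterating Lemma \ref{lem:upsilon-omega-and-upsilon-transform}). For each $0\le j\le N-1$ the vector field $\overline{X}_j$ is singular at $P_j$, so the coefficients of $\overline{\omega}_j$ lie in the maximal ideal of $P_j$; after a linear change of local coordinates at $P_j$ making the tangent cone of $\overline{\Gamma}_j$ different from $\{x=0\}$, Lemma \ref{lem:upsilon-omega-and-upsilon-transform}, applied to the germ of $\mathcal{X}_j$ at $P_j$ (a copy of $\cc$), gives $\upsilon_{\overline{\Gamma}_j}(\overline{\omega}_j)=\upsilon_{\overline{\Gamma}_{j+1}}(\overline{\omega}_{j+1})+n_j$, the lemma itself guaranteeing that $\overline{\omega}_{j+1}$ is again the dual form of $\overline{X}_{j+1}$. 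Telescoping over $j=0,\dots,N-1$ yields $\upsilon_{\Gamma}(\omega)=\upsilon_{\overline{\Gamma}_N}(\overline{\omega}_N)+\sum_{j=0}^{N-1}n_j$.

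Step 2 (the last point). Since $\pi_{N-1}$ blows up $P_{N-1}$, where $\overline{\Gamma}_{N-1}$ has multiplicity $n_{N-1}$, and $\overline{\Gamma}_N$ is a branch meeting $E_N$ only at $P_N$, one has $(\overline{\Gamma}_N,E_N)_{P_N}=n_{N-1}$. The exceptional divisor of a point blow-up is invariant for the pull-back, so $\overline{X}_N$ is tangent to $E_N$; being regular at $P_N$, the vector $\overline{X}_N(P_N)\neq 0$ spans the tangent line of $E_N$ there. Choose local coordinates $(x,y)$ at $P_N$ with $E_N=\{y=0\}$: then $B:=\overline{X}_N(y)\in(y)$ by invariance of $E_N$, while $A:=\overline{X}_N(x)$ is a unit, so $\overline{\omega}_N=-B\,dx+A\,dy$. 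For an irreducible parametrization $(x(t),y(t))$ of $\overline{\Gamma}_N$ we have $\ord_t y(t)=(\overline{\Gamma}_N,E_N)_{P_N}=n_{N-1}$ and $\ord_t x(t)\ge 1$; hence $A(x(t),y(t))\dot{y}(t)$ has order exactly $n_{N-1}-1$, whereas $B(x(t),y(t))\dot{x}(t)$ has order at least $\ord_t y(t)=n_{N-1}$, so it does not interfere, and $\upsilon_{\overline{\Gamma}_N}(\overline{\omega}_N)=(n_{N-1}-1)+1=n_{N-1}$. Plugging this into Step 1 gives $\upsilon_{\Gamma}(\omega)=n_{N-1}+\sum_{j=0}^{N-1}n_j$; as $n_0,\dots,n_{N-1}$ are the multiplicities of $\Gamma$ at its own (infinitely near) points, the right-hand side depends only on $\Gamma$ and $N$.

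I expect the heart of the argument to be Step 2: recognizing that once $\overline{X}$ is regular at the non-corner point $P_N$ it is forced to be tangent there to the invariant divisor $E_N$, which makes the restriction of $\overline{\omega}_N$ to $\overline{\Gamma}_N$ dominated by the single term $A\,dy$, whose $t$-order is controlled by $(\overline{\Gamma}_N,E_N)_{P_N}=n_{N-1}$; after that it is a one-line valuation estimate. Step 1 should be routine, the only care needed being the choice of charts at the intermediate (possibly corner) points $P_1,\dots,P_{N-1}$ and the bookkeeping that the successive forms $\overline{\omega}_j$ remain the dual forms of the successive $\overline{X}_j$ — both already handled inside Lemma \ref{lem:upsilon-omega-and-upsilon-transform}.
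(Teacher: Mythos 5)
Your proof is correct and follows essentially the same route as the paper: telescoping Lemma \ref{lem:upsilon-omega-and-upsilon-transform} along the shared path and anchoring with a direct valuation estimate at the end. The only (cosmetic) difference is that you compute the anchor $\upsilon_{\overline{\Gamma}_N}(\overline{\omega}_N)=n_{N-1}$ directly at the regular point $P_N$ using tangency to the invariant divisor $E_N$ and $(\overline{\Gamma}_N,E_N)_{P_N}=n_{N-1}$, whereas the paper computes $\upsilon_{\overline{\Gamma}_{N-1}}(\overline{\omega}_{N-1})=2n_{N-1}$ at $P_{N-1}$ from the nonvanishing linear coefficient forced by the regularity of $\overline{X}_N$ at $P_N$ and then applies the lemma once more.
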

\begin{proof}
 Let $(P_j)_{j=0}^{N}$ be the path shared by $X$ and $\Gamma$ and $n_j$ the
  multiplicity of the strict transform $\overline{\Gamma}_j$ of $\Gamma$ at
  $P_j$. For each $j=0,\dots, N$, if we denote by $\overline{X}_j$ the pull-back
  of $X$ to the respective space (so that $\overline{X}_0 = X$) and
  $\overline{\omega}_j$ its dual form. We have
  \begin{equation*}
    \upsilon_{\overline{\Gamma}_j}(\overline{\omega}_{j}) =
    \upsilon_{\overline{\Gamma}_{j-1}}(\overline{\omega}_{j-1}) - n_{j-1},
    \mbox{ for }j=1,\dots,{N-1},
  \end{equation*}
  by Lemma \ref{lem:upsilon-omega-and-upsilon-transform}.
  As $\overline{X}_{N-1}$ does not preserve the tangent cone of $\overline{\Gamma}_{N-1}$
  (because $P_N$ is not a singular point of $\overline{X}_N$), we may assume
  that $\overline{\Gamma}_{N-1}=(t^{n_{N-1}}, t^q+\hot)$ with $q>n_{N-1}$.
  The form $\overline{\omega}_{N-1}$ can be written 
   \begin{equation*}
    \overline{\omega}_{N-1} = (ax+by+\hot)dx + (cx+dy+\hot)dy
  \end{equation*}
  with $a\neq 0$, which gives
  $\upsilon_{\Gamma_{N-1}}(\overline{\omega}_{N-1})=2n_{N-1}$ and, from Lemma
  \ref{lem:upsilon-omega-and-upsilon-transform}, we get
  \begin{equation*}
    \upsilon_{\overline{\Gamma}_N}(\overline{\omega}_N) = n_{N-1}
  \end{equation*}
  and then
  \begin{equation*}
    n_{N-1} = \upsilon_{\overline{\Gamma}_N}(\overline{\omega}_N) =
    \upsilon_{\Gamma}(\omega)-n_0-n_1-\cdots - n_{N-1}
  \end{equation*}
  and the result follows.
\end{proof}

\subsection{The shared path and Puiseux's expansion}
The concept of the path shared by a singular vector field and an analytic branch
is deeply related to the Puiseux expansion of the branch and the contact between
the branch and the vector field (or the branch and its deformation).

Start with an analytic branch $\Gamma$ at $\cc$ which is not tangent to the $OY$
axis, so that it admits a Puiseux expansion\footnote{We always assume the
  parametrizations to be irreducible.} of the form
\begin{equation*}
  \Gamma\equiv \varphi(t) = (t^n, \sum\limits_{i\geq n} a_it^i),
\end{equation*}
where $\Gamma$ is regular if and only if $n=1$.
Let $X$ be a singular vector field at
$\cc$. We need a technical result foremost:

\begin{lemma}\label{lem:root-of-puiseux-series}
  Let $a(z)=\sum_{i\geq n} a_i(z)t^i$ be a power series with integer exponents
  such that $n\geq 1$ and each $a_i(z)$ is a holomorphic function in $z$ (each
  with its own radius of convergence), with $a_n(0)\neq 0$. If
  $r(z)=\sum_{i\geq 1} r_i(z)t^i$ is such that
  \begin{equation*}
    r(z)^n = a(z),
  \end{equation*}
  then $r_i(z)$ are also holomorphic functions in $z$ and $r_1(0)\neq 0$.
\end{lemma}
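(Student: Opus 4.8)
The plan is to solve for the coefficients $r_i(z)$ recursively and show at each step that the resulting expression is holomorphic in $z$, using the hypothesis $a_n(0)\neq 0$ to control the denominators. First I would observe that since $r(z)^n = a(z)$ has lowest-order term $t^n$ on the right, the series $r(z)$ must start at order $t^1$, so write $r(z)=\sum_{i\geq 1} r_i(z) t^i$ as in the statement. Expanding $r(z)^n$ and comparing the coefficient of $t^n$ gives $r_1(z)^n = a_n(z)$; since $a_n$ is holomorphic with $a_n(0)\neq 0$, on a small enough neighbourhood of $0$ it has a holomorphic $n$-th root, so $r_1(z)$ is holomorphic and $r_1(0)\neq 0$. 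This is the base case.

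For the inductive step, suppose $r_1(z),\dots,r_{k-1}(z)$ have been determined and shown to be holomorphic, with $r_1(0)\neq 0$. Comparing the coefficient of $t^{n+k-1}$ in the identity $r(z)^n = a(z)$, one isolates the single term containing $r_k(z)$: it appears only through the product $n\, r_1(z)^{n-1} r_k(z)$ (the one way of choosing $n-1$ factors equal to $r_1 t$ and one factor equal to $r_k t^k$ produces total degree $n+k-1$). All the other contributions to the coefficient of $t^{n+k-1}$ are polynomial expressions in $r_1(z),\dots,r_{k-1}(z)$, hence holomorphic by the inductive hypothesis, and of course $a_{n+k-1}(z)$ is holomorphic. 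Therefore
\begin{equation*}
  r_k(z) = \frac{a_{n+k-1}(z) - Q_k\big(r_1(z),\dots,r_{k-1}(z)\big)}{n\, r_1(z)^{n-1}}
\end{equation*}
for a suitable polynomial $Q_k$, and since the denominator $n\, r_1(z)^{n-1}$ is holomorphic and non-vanishing on a neighbourhood of $0$ (because $r_1(0)\neq 0$), the function $r_k(z)$ is holomorphic there. This closes the induction.

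The only genuine subtlety — and the step I would be most careful about — is the bookkeeping that guarantees $r_k(z)$ enters the coefficient of $t^{n+k-1}$ only linearly and only through the factor $n\, r_1(z)^{n-1}$: one must check that no higher $r_j$ with $j\ge k$ contributes, which follows from a degree count (any monomial $r_{i_1}\cdots r_{i_n} t^{i_1+\cdots+i_n}$ with some $i_\ell \ge k$ and the rest $\ge 1$ has degree $\ge n+k-1$, with equality forcing all the other indices to equal $1$). I would also note that all the radii of convergence shrink in a controlled way, so that a common neighbourhood of $0$ works for any fixed finite set of coefficients, which is all that is needed since the statement only asserts holomorphy of each individual $r_i$. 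Everything else is the routine formal power-series manipulation, so I would not spell it out in detail.
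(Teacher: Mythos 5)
Your proposal is correct and follows essentially the same route as the paper's proof: induction (equivalently, indeterminate coefficients) based on the expansion $r(z)^n = r_1(z)^n t^n + \sum_{j\ge 2}\bigl(n r_1(z)^{n-1} r_j(z) + P_j(r_1(z),\dots,r_{j-1}(z))\bigr)t^{n+j-1}$, extracting $r_1$ as a holomorphic $n$-th root of $a_n$ and then dividing by the non-vanishing $n r_1^{n-1}$ at each step. The paper states this more tersely, but your degree-count justification of why $r_k$ appears only linearly through $n r_1^{n-1} r_k$ is exactly the bookkeeping it leaves implicit.
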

\begin{proof}
  The proof is done by induction on $i$ or, what amounts to the same, by the
  method of indeterminate coefficients. Actually, one can prove that there exist
  polynomials $P_j(z)$ in $j-1$ variables such that
  \begin{equation*}
    r(z)^n =
    \bigg( r_1(z)^nt^n +  \sum_{j=2}^{\infty} \big(nr_1(z)^{n-1}r_j(z) +
    P_j(r_1(z),\dots,r_{j-1}(z))\big)t^{n+j-1} \bigg)
  \end{equation*}
  from which the result follows.
\end{proof}

\begin{definition}
Assume that $X$ is a singular vector field. Let $P$ and $Q$ be the points defined in the
divisor of the blow-up of the origin by the tangent cone of $\Gamma$ and $x=0$ respectively.
We say that $X$ is {\it prepared} relatively to
$\Gamma$ if either $P$ or $Q$ is a singular point of $\overline{X}$.
\end{definition}
\begin{remark}
  Given a vector field $X$ we may assume that it is prepared up to a linear
  change of coordinates that preserves the tangent cone of $\Gamma$ at $(0,0)$,
  which is assumed to be $y=0$.  The preparation guarantees that no curve of the
  form $\Gamma_{\epsilon}$ has the $OY$ axis as its tangent cone.  Given a
  vector field $X = a(x,y) \pt{x} + b(x,y) \pt{y}$, it is prepared relatively to
  $\Gamma$ if and only if $\frac{\partial a}{\partial y} (0,0) =0$ or
  $\frac{\partial b}{\partial x} (0,0) =0$.  The transform $\overline{X}$ of a
  singular vector field $X$ is of the form
  \begin{equation*}
    \overline{X} = xA(x,y)\pt{x} +
    (\mu_0 + \mu_{1} y + \mu_2 y^2 + xB(x,y))\pt{y}.
 \end{equation*}
 and $X$ is prepared if and only if $\mu_0 =0$ or $\mu_2=0$.
\end{remark}

Consider the deformation $\Gamma_{\epsilon}$ of $\Gamma$ by $X$.

\begin{proposition}\label{pro:curve-and-puiseux-deformation-and-field}
  Assume that $X$ is prepared relatively to $\Gamma$.
   The deformation $\Gamma_{\epsilon}$ admits an
  irreducible Puiseux parametrization:
  \begin{equation*}
    \Gamma_{\epsilon} \equiv \varphi_{\epsilon} (t)=
    \bigg(t^n,\,  \sum_{\substack{i\geq n}}\tilde{a}_{i}(\epsilon)t^i\bigg).
  \end{equation*}
  with $\tilde{a}_i(\epsilon)$ being an entire function 
  in $\epsilon$ and
  $\tilde{a}_i(0)=a_i$ for all $i\geq n$.
\end{proposition}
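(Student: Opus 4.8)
The plan is to reduce the statement to a convergence/holomorphy assertion about the coefficients of an implicitly defined Puiseux series, and then invoke Lemma \ref{lem:root-of-puiseux-series}. First I would recall from Equation (\ref{equ:taylor}) that $\Gamma_\epsilon\equiv(F_\epsilon(x,y)=0)$ with $F_\epsilon=\sum_{n\ge 0}\frac{\epsilon^n}{n!}X^n(f)$, and that each coefficient of $x^iy^j$ in $F_\epsilon$ is an entire function of $\epsilon$ (this is noted in the introduction; it follows from the fact that $X$ raises the order by at least $-1$ and $f$ is fixed, so only finitely many terms $X^n(f)$ contribute to a given monomial, each coefficient being a polynomial in $\epsilon$... in fact the full sum at a fixed monomial is entire as an absolutely convergent power series in $\epsilon$). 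Since $\Gamma$ is not invariant by $X$ and $X$ is prepared relatively to $\Gamma$, Proposition \ref{pro:deformations-share} guarantees that for $\epsilon$ small $\Gamma_\epsilon$ is a branch analytically conjugate to $\Gamma$, hence of the same multiplicity $n$ at $(0,0)$; the preparation hypothesis (via the Remark following the definition of ``prepared'') ensures that the tangent cone of $\Gamma_\epsilon$ is never $x=0$, so $\Gamma_\epsilon$ genuinely admits an irreducible parametrization with $x$-component $t^n$.

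Next I would actually produce the parametrization. Write $F_\epsilon(x,y)=\sum_{i,j}c_{ij}(\epsilon)x^iy^j$ with the $c_{ij}$ entire in $\epsilon$, and $F_0=f$. Because the tangent cone of $\Gamma_\epsilon$ is $y=0$ with multiplicity $n$, the Weierstrass preparation theorem (in the variable $y$, over the ring of functions of $x$ and $\epsilon$) lets us replace $F_\epsilon$ by a Weierstrass polynomial $y^n+b_{n-1}(x,\epsilon)y^{n-1}+\dots+b_0(x,\epsilon)$ whose coefficients $b_k(x,\epsilon)$ are holomorphic in $(x,\epsilon)$ near $(0,0)$ and vanish at $x=0$; this is legitimate since the $c_{ij}(\epsilon)$ depend holomorphically on $\epsilon$ and $c_{0n}(0)\ne 0$. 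Now substitute $x=t^n$. The resulting polynomial in $y$ with coefficients holomorphic in $(t,\epsilon)$ has, by the branch property, a root $y=y(t,\epsilon)=\sum_{i\ge n}\tilde a_i(\epsilon)t^i$; for $\epsilon=0$ this root is the given Puiseux series $\sum_{i\ge n}a_it^i$ of $\Gamma$. What remains is to show the $\tilde a_i$ are entire in $\epsilon$ and that $\tilde a_i(0)=a_i$.

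For the holomorphy, I would argue as follows. Fix a small disc $D$ in the $\epsilon$-plane on which everything above is defined and $\Gamma_\epsilon$ is a branch of multiplicity $n$. On $D$ the root $y(t,\epsilon)$ exists and, by the standard Puiseux/Newton argument for a one-parameter family of Weierstrass polynomials with no multiplicity jump, is a convergent power series in $t$ whose coefficients $\tilde a_i(\epsilon)$ are holomorphic on $D$: concretely, writing the defining relation $\big(y(t,\epsilon)\big)^n=a(t,\epsilon)$ after clearing lower-order terms — or more directly feeding the defining polynomial relation into Lemma \ref{lem:root-of-puiseux-series} with $z=\epsilon$ — one gets recursively $\tilde a_i(\epsilon)=\frac1{n\,\tilde a_n(\epsilon)^{n-1}}\big(\text{entire}-P_i(\tilde a_n,\dots,\tilde a_{i-1})\big)$, holomorphic as long as $\tilde a_n(\epsilon)\ne 0$; and $\tilde a_n(0)=a_n\ne 0$ (recall $f=y^n+\hot$ forces $a_n\ne 0$), so $\tilde a_n$ is nonvanishing on a neighborhood of $0$. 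The condition $\tilde a_i(0)=a_i$ then holds because at $\epsilon=0$ the defining data are those of $\Gamma$ and the recursion has a unique solution with $\tilde a_n(0)\ne 0$. Finally, to upgrade ``holomorphic near $0$'' to ``entire'': I would observe that the monomial coefficients $c_{ij}(\epsilon)$ are entire, and run the formal recursion for $\tilde a_i(\epsilon)$ directly at the level of power series in $t$ with coefficients in the ring of entire functions — the Newton polygon of $F_\epsilon$ has constant shape (first coefficient $c_{0n}(\epsilon)$ is... not quite constant, but the leading behavior forces the same $t$-exponent $n$) — so each $\tilde a_i(\epsilon)$ is obtained from entire functions by finitely many additions, multiplications, and one division by a power of $\tilde a_n(\epsilon)$, which is entire and nonvanishing, hence... here one must be careful, since $1/\tilde a_n$ need not be entire. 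The cleanest fix, which I would adopt: since $\tilde a_n(\epsilon)=c_{0n}(\epsilon)$ up to a nonzero constant and one checks $c_{0n}(\epsilon)$ is in fact constant (equal to the coefficient of $y^n$ in $f$, because $X^m(f)$ for $m\ge 1$ has order $\ge n$ but no pure $y^n$ term survives after using that $X$ is prepared — this needs a short check), the division is by a nonzero constant and entireness of all $\tilde a_i$ follows.

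The main obstacle is precisely this last point — proving that the $\tilde a_i(\epsilon)$ are \emph{entire} rather than merely holomorphic near $\epsilon=0$. The holomorphy near $0$ is routine from Lemma \ref{lem:root-of-puiseux-series}; the global statement requires controlling the denominator in the recursion, i.e. showing $\tilde a_n(\epsilon)$ is a nonzero constant (or otherwise eliminating the division). I expect this to come down to a direct computation with the Taylor expansion (\ref{equ:taylor}): the coefficient of $y^n$ in $X^m(f)$ vanishes for all $m\ge 1$ because, under the preparation hypothesis, blowing up shows $\overline\Gamma$ meets the divisor at a point where the relevant coefficient of $\overline X$ is nonzero, which forbids $X^m(f)$ from contributing a pure $y^n$ term. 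Granting that, $c_{0n}(\epsilon)\equiv c_{0n}(0)\ne 0$, every step of the recursion is polynomial in entire functions, and each $\tilde a_i(\epsilon)$ is entire with $\tilde a_i(0)=a_i$, completing the proof.
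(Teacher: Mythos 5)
Your route is genuinely different from the paper's: you work with the implicit equation $F_\epsilon=\sum_m\frac{\epsilon^m}{m!}X^m(f)$, Weierstrass-prepare in $y$, and try to run Newton--Puiseux with $\epsilon$ as a parameter, whereas the paper pushes the \emph{parametrization} forward by the flow: it sets $(\gamma_1(s,t),\gamma_2(s,t))=\psi_s(t^n,\sum_i a_it^i)$, notes that the $t$-coefficients are entire in $s$, that the coefficient of $t^n$ in $\gamma_1$ is a nowhere-vanishing entire function (constancy of the multiplicity plus preparedness exclude the tangent cone $x=0$), extracts an $n$-th root of $\gamma_1$ via Lemma \ref{lem:root-of-puiseux-series} --- whose recursion only ever divides by $n r_1(s)^{n-1}$ with $r_1$ entire and nowhere zero, hence stays in the ring of entire functions --- and then reparametrizes. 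You correctly isolate entireness of the $\tilde a_i$ as the crux, but your proposed resolution does not hold up.

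Concretely: (i) your ``cleanest fix'' rests on the claim that the coefficient $c_{0n}(\epsilon)$ of $y^n$ in $F_\epsilon$ is constant because preparedness kills the pure $y^n$ term of $X^m(f)$; this is false. Take $f=y^2-x^3$ and $X=y\,\partial/\partial y$, which is prepared (here $A\equiv 0$) and does not leave $\Gamma$ invariant; then $X^m(f)=2^m y^2$ and $F_\epsilon=e^{2\epsilon}y^2-x^3$, so $c_{02}(\epsilon)=e^{2\epsilon}$. Preparedness constrains $\partial A/\partial y$ or $\partial B/\partial x$ at the origin, not $\partial B/\partial y$, and it is the latter that produces the term $nb_{01}y^n$ in $X(f)$. (ii) The parenthetical ``$f=y^n+\hot$ forces $a_n\neq 0$'' is backwards: tangency to the $OX$ axis means $\ord_t y(t)>n$, i.e.\ $a_n=0$, so a recursion with denominator $n\tilde a_n(\epsilon)^{n-1}$ divides by zero; more generally, for a singular branch the linear coefficient-by-coefficient recursion you write (divide by the $y$-derivative of the leading form) is not the Newton--Puiseux process, since that derivative vanishes along the tangent cone when $n>1$. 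Lemma \ref{lem:root-of-puiseux-series} applies to a relation $r^n=a$ with the coefficient of $t^n$ in $a$ nonzero --- which is exactly the situation of the $x$-component in the paper's proof, not of your $y$-component. (iii) Your worry that ``$1/\tilde a_n$ need not be entire'' when $\tilde a_n$ is entire and nonvanishing is unfounded: reciprocals and $n$-th roots of nowhere-vanishing entire functions are entire, and this is precisely what makes the paper's argument close; what genuinely needs proof is the nowhere-vanishing itself, and it is available for the leading $t$-coefficient of the $x$-component, not for $\tilde a_n$. As written, the entireness claim is not established; I would switch to the parametrization-based argument.
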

\begin{proof} 
  Let ${ \{ \psi_s \}}_{s \in {\mathbb C}}$ the one-parameter group associated to $X$.
  Let $(\gamma_1 (t), \gamma_2 (t))= (t^n, \sum\limits_{i\geq n} a_it^i)$ be a Puiseux
  parametrization.
  We define the  map
  \begin{equation*}
 (\gamma_1(s,t), \gamma_2(s,t)) = \psi_{s} (\gamma_1 (t), \gamma_2(t)).
  \end{equation*}
  It is well-defined and holomorphic in a neighborhood of $t=0$.  As a
  consequence $\gamma_1 (s,t)$ and $\gamma_2 (s,t)$ are of the form
  $\gamma_1 (s,t) = \sum_{j=1}^{\infty} b_j (s) t^{j}$ and
  $\gamma_2 (s,t) = \sum_{j=1}^{\infty} c_j (s) t^{j}$ where $b_j$ and $c_j$ are
  entire functions for any $j \geq 1$.  Since the multiplicity of every curve
  $\Gamma_s$ is equal to $n$ for $s \in {\mathbb C}$, all coefficients $b_j$ and
  $c_j$ with $j < n$ are identically $0$.  Moreover $b_n$ is a never vanishing
  entire function, otherwise the tangent cone of $\Gamma_s$ would be $x=0$ for
  some $s \in {\mathbb C}$. Lemma \ref{lem:root-of-puiseux-series} implies that
  there exists $\beta (s,t) = \sum_{j=1}^{\infty} \tilde{b}_j (s) t^{j}$ such
  that $\beta (s,t)^{n} = \gamma_1 (s,t)$ where $\tilde{b}_j$ is an entire
  function for any $j \geq 1$ and $\tilde{b}_1$ is never vanishing.  Denote by
  $(s, \alpha (s,t))$ the inverse map of $(s, \beta (s,t))$. It is well-defined
  in a neighborhood of $t$ since $\tilde{b}_1$ never vanishes. The
  parametrization that we are looking for is
  $(t^{n}, \gamma_2 (s, \alpha (s,t)))$. 
\end{proof}
As a corollary we obtain  the following result. 

\begin{corollary}\label{cor:curve-and-puiseux-deformation-and-field}
  For $\epsilon$ small enough, the deformation $\Gamma_{\epsilon}$ admits an
  irreducible Puiseux parametrization:
  \begin{equation*}
    \Gamma_{\epsilon} \equiv \varphi_{\epsilon} (t)=
    \bigg(t^n,\,  \sum_{\substack{i\geq n}}\tilde{a}_{i}(\epsilon)t^i\bigg).
  \end{equation*}
  with $\tilde{a}_i(\epsilon)$ holomorphic
  in $\epsilon$ and
  $\tilde{a}_i(0)=a_i$ for all $i\geq n$.
\end{corollary}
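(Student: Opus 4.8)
The plan is to re-run the proof of Proposition \ref{pro:curve-and-puiseux-deformation-and-field}, observing that the preparation hypothesis was used there only to turn a local statement into a global one. The invariant case is immediate: if $\Gamma$ is invariant by $X$, then $X^n(f)\in(f)$ for every $n$, so $f+\sum_{n\ge1}\frac{\epsilon^n}{n!}X^n(f)$ equals $f$ times a unit for $|\epsilon|$ small and $\Gamma_\epsilon=\Gamma$; hence assume $\Gamma$ is not invariant.

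Keeping the notation of the cited proof, let $\{\psi_s\}_{s\in\mathbb{C}}$ be the one-parameter group of $X$, let $(\gamma_1(t),\gamma_2(t))=(t^n,\sum_{i\ge n}a_i t^i)$ be a Puiseux parametrization of $\Gamma$ (of this shape because $\Gamma$ is not tangent to the $OY$ axis), and set $(\gamma_1(s,t),\gamma_2(s,t))=\psi_s(\gamma_1(t),\gamma_2(t))=(\sum_{j\ge1}b_j(s)t^j,\sum_{j\ge1}c_j(s)t^j)$, holomorphic in $(s,t)$ near the origin with the $b_j,c_j$ holomorphic near $s=0$; as before, $b_j\equiv c_j\equiv 0$ for $j<n$. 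The single step of that proof that invoked preparation was the assertion that $b_n$ never vanishes on $\mathbb{C}$; here I only need $b_n(\epsilon)\ne0$ for $\epsilon$ near $0$, and this is automatic since $\psi_0=\mathrm{id}$ gives $\gamma_1(0,t)=t^n$, i.e. $b_n(0)=1$. On a neighbourhood of $s=0$ where $b_n$ does not vanish, Lemma \ref{lem:root-of-puiseux-series} yields $\beta(s,t)=\sum_{j\ge1}\tilde b_j(s)t^j$ with $\beta(s,t)^n=\gamma_1(s,t)$, the $\tilde b_j$ holomorphic near $s=0$ and $\tilde b_1$ non-vanishing; taking the branch with $\tilde b_1(0)=1$ forces $\beta(0,t)=t$. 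Inverting $t\mapsto\beta(s,t)$ (holomorphic in $(s,t)$ near the origin since $\tilde b_1(s)\ne0$) and substituting gives $\varphi_\epsilon(t)=(t^n,\sum_{i\ge n}\tilde a_i(\epsilon)t^i)$ with each $\tilde a_i$ holomorphic in $\epsilon$ near $0$, and $\tilde a_i(0)=a_i$ because at $\epsilon=0$ the construction returns $(t^n,\gamma_2(0,t))=(t^n,\sum a_i t^i)$.

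There is no genuine obstacle here — the statement is just the local shadow of Proposition \ref{pro:curve-and-puiseux-deformation-and-field}. The only point to keep in mind, and the reason the conclusion is weaker than that of the proposition, is that without the preparation of $X$ the coefficient $b_n(\epsilon)$ may vanish for some $\epsilon\in\mathbb{C}^{*}$ (equivalently, $\Gamma_\epsilon$ may acquire $x=0$ as tangent cone), so a parametrization of the prescribed form need not exist for all $\epsilon$; this forces the restriction to small $\epsilon$ and holomorphy in place of entireness.
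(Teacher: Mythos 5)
Your proof is correct and follows exactly the route the paper intends: the corollary is the local shadow of Proposition \ref{pro:curve-and-puiseux-deformation-and-field}, and the only use of the preparation hypothesis there is to force $b_n$ to be non-vanishing on all of $\mathbb{C}$, which near $\epsilon=0$ is automatic from $b_n(0)=1$. Your closing remark correctly identifies why the conclusion weakens from entire to locally holomorphic coefficients.
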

The algebraic counterpart to the geometric concept of the shared path is the \emph{contact exponent}:
\begin{definition}\label{def:contact-exponent}
  The \emph{contact exponent} of a singular vector field $X$ with an analytic
  branch $\Gamma$ at a point $P$ of a complex analytic surface $\mathcal{M}$,
  denoted $(X,\Gamma)_{P}$, is the least $i$ such that $\tilde{a}_i(\epsilon)$
  is not constant in Proposition
  \ref{pro:curve-and-puiseux-deformation-and-field} (for any irreducible Puiseux
  parametrization of $\Gamma$).
\end{definition}
\begin{remark}
  The contact exponent is independent of the choice of coordinates.  Consider a
  local biholomorphism $\phi \in \mathrm{Diff}({\mathbb C}^{2},0)$ such that the
  linear part $D_0 \phi$ at the origin does not send the tangent line to
  $\Gamma$ at $0$ to the $OY$ axis. It can be shown that
  $(X,\Gamma)_{(0,0)} = (\phi_{*} X, \phi (\Gamma))_{(0,0)}$ by a simple
  calculation.
\end{remark}
\begin{remark}\label{rem:contact-less-than-m-is-multiple-of-n}
  Assume $\Gamma$ has an irreducible Puiseux expansion $(t^n, at^m +\hot)$ with
  $a\neq 0$ and $n<m$. If
  $j=(X,\Gamma)_{(0,0)}<m$  then $j$ must be a multiple of $n$:
  otherwise the topological types of $\Gamma$ and $\Gamma_{\epsilon}$ would be
  different, which is impossible because they are analytically equivalent.
\end{remark}

One has a formula analogue to that of Lemma
\ref{lem:upsilon-omega-and-upsilon-transform}, which provides the
relation between the shared path and the contact exponent:
\begin{lemma}\label{lem:contact-exponent-and-blow-up}
  Assume $\Gamma$ is not invariant for $X$ and let
  $\pi:\mathcal{X}\rightarrow \cc$ be the blow-up with centre $(0,0)$ and
  $\overline{\Gamma}$ the strict transform of $\Gamma$ by $\pi$, which meets
  $\pi^{-1}(0,0)$ at $P$. Let $\overline{X}$ be pull-back of $X$ to
  $\mathcal{X}$. Let $n$ be the multiplicity of $\Gamma$ at $(0,0)$ and
  $\overline{n}$ that of $\overline{\Gamma}$ at $P$. Then:
  \begin{itemize}
  \item Either $\overline{X}$ is non-singular at $P$ and $(X,\Gamma)_{(0,0)}=n$
  \item Or $\overline{X}$ is singular at $P$ and
  \begin{equation*}
    (X, \Gamma)_{(0,0)} = (\overline{X}, \overline{\Gamma})_P + \overline{n}.
  \end{equation*}
  \end{itemize}
\end{lemma}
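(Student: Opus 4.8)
The plan is to translate the statement entirely into the language of deformations and strict transforms, where the preceding lemmas do the work. First I would fix an irreducible prepared Puiseux parametrization $\varphi(t)=(t^n,\sum_{i\ge n}a_it^i)$ of $\Gamma$ and, after a linear change of coordinates preserving the tangent cone (legitimate by the remark following Definition \ref{def:contact-exponent}), assume $X$ is prepared relatively to $\Gamma$, so that Proposition \ref{pro:curve-and-puiseux-deformation-and-field} applies and the deformation $\Gamma_\epsilon$ has an irreducible Puiseux parametrization $\varphi_\epsilon(t)=(t^n,\sum_{i\ge n}\tilde a_i(\epsilon)t^i)$ with $\tilde a_i$ entire and $\tilde a_i(0)=a_i$. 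By definition $(X,\Gamma)_{(0,0)}$ is the least index $k$ with $\tilde a_k$ non-constant; by Remark \ref{rem:contact-less-than-m-is-multiple-of-n} (or directly, since the multiplicity is $n$ for all $\epsilon$), if $k<m$ then $n\mid k$.

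The key step is Lemma \ref{lem:deformation-of-strict-transform}: the strict transform of the deformation equals the deformation of the strict transform, i.e. $\overline{\Gamma_\epsilon}=\overline{\Gamma}_\epsilon$ (deformation by $\overline X$). So I would compute the Puiseux parametrization of $\overline{\Gamma_\epsilon}$ in the chart $x=\overline x,\ y=\overline x\overline y$ by substituting $\varphi_\epsilon$: this gives $\overline\Gamma_\epsilon\equiv(t^n,\ \sum_{i\ge n}\tilde a_i(\epsilon)t^{i-n})=(t^n,\ \tilde a_n(\epsilon)+\tilde a_{n+1}(\epsilon)t+\cdots)$, then reparametrize. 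Two cases arise according to whether $\overline X$ is singular at $P$.

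If $\overline X$ is non-singular at $P$, then $P$ is already off the shared path; by Proposition \ref{pro:deformations-share} (applied with shared path of length $0$ past the origin, i.e. $N=0$ at the level of $\overline X$) the strict transform $\overline{\Gamma_\epsilon}$ meets the exceptional divisor away from $P$ for small $\epsilon$, which forces the tangent direction at the origin of $\Gamma_\epsilon$ to vary, i.e. $\tilde a_n(\epsilon)$ is non-constant; hence $(X,\Gamma)_{(0,0)}=n$. (Equivalently, one reads this off Corollary \ref{cor:intersection-gamma-and-gammae} with $N=1$: $(\Gamma,\Gamma_\epsilon)_{(0,0)}=n_0^2=n^2$, forcing first contact at order $n$.) In the singular case, $\tilde a_n(\epsilon)\equiv a_n$ since $\overline X$ fixes that tangent direction infinitesimally, so $(X,\Gamma)_{(0,0)}\ge n+1$; writing the prepared Puiseux parametrization of $\overline\Gamma$ as $(t^{\overline n},\ldots)$ where $\overline n$ is the multiplicity of $\overline\Gamma$ at $P$ and unwinding the reparametrization of $\overline{\Gamma_\epsilon}=\overline\Gamma_\epsilon$, the index of the first non-constant coefficient of $\overline\Gamma_\epsilon$, namely $(\overline X,\overline\Gamma)_P$, corresponds to the index $(X,\Gamma)_{(0,0)}$ shifted by $\overline n$, giving $(X,\Gamma)_{(0,0)}=(\overline X,\overline\Gamma)_P+\overline n$.

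The main obstacle I anticipate is bookkeeping the reparametrization cleanly: after substituting $\varphi_\epsilon$ into the blow-up chart one does not immediately get an irreducible Puiseux parametrization of $\overline{\Gamma_\epsilon}$ in the normalized form $(t^{\overline n},\ldots)$, and one must track how the change of uniformizer $t\mapsto t'$ (with $t'^{\overline n}$ equal to a unit times a power of the new $\overline x$-coordinate) permutes which coefficient first becomes non-constant. The entirety and the identity $\tilde a_i(0)=a_i$ from Proposition \ref{pro:curve-and-puiseux-deformation-and-field} guarantee the reparametrization can be done $\epsilon$-analytically, so that "first non-constant coefficient" is preserved under it; making this precise — essentially that the change of variables is, up to the relevant order, the same as for the undeformed $\Gamma$ — is the only delicate point, and it is handled by the same indeterminate-coefficient argument as in Lemma \ref{lem:root-of-puiseux-series}.
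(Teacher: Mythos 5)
Your proposal is correct and follows essentially the same route as the paper: parametrize the deformation $\Gamma_\epsilon$ via Proposition \ref{pro:curve-and-puiseux-deformation-and-field}, use the commutation of strict transform and deformation, dispose of the non-singular case by the motion of the tangent direction, and in the singular case track the first non-constant coefficient through the blow-up substitution and the root-extraction reparametrization, which is exactly where the shift by $\overline{n}$ appears. The paper merely makes your ``delicate point'' explicit by splitting into the cases $m\geq 2n$ (no reparametrization needed, $\overline{n}=n$) and $n<m<2n$ (extract an $(m-n)$-th root, $\overline{n}=m-n$), in both of which the shift is $\overline{n}$ as you claim.
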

\begin{proof}
  If $\overline{X}$ is non-singular at $P$, the result is straightforward as $X$
  does not fix the tangent cone of $\Gamma$. Assume, then, that $P$ is singular
  for $\overline{X}$.

  Take a prepared irreducible Puiseux parametrization of $\Gamma_{\epsilon}$:
  \begin{equation}
    \label{eq:puiseux-gamma-epsilon}
    \Gamma_{\epsilon}\equiv \varphi_{\epsilon}(t) =
    \left(t^n, \sum_{m\leq i<j}a_it^i + \alpha_j(\epsilon)t^j + \hot\right)
  \end{equation}
  with $j=(X,\Gamma)_{(0,0)}$, as in Proposition
  \ref{pro:curve-and-puiseux-deformation-and-field}. Let
  $\pi:\mathcal{X}\rightarrow \cc$ be the blow-up with centre $(0,0)$ with
  equations $x=\overline{x}, y=\overline{x}\overline{y}$, for which
  $\overline{\Gamma}$ meets $\pi^{-1}(0,0)$ at $\overline{y}=0$. There are two
  cases:
  \begin{itemize}
  \item If $m\geq 2n$, the curve $\overline{\Gamma}_{\epsilon}$ has the same
    Puiseux parametrization as \eqref{eq:puiseux-gamma-epsilon} except that the
    $\overline{y}-$coordinate has all the exponents subtracted by $n$. The
    multiplicity of $\Gamma_{\epsilon}$ (and hence $\overline{\Gamma}$) is $n$
    and the result follows.
  \item If $n < m<2n$, then, $a_m\neq 0$ and, by Remark
    \ref{rem:contact-less-than-m-is-multiple-of-n}, we have $j\geq m$
    (otherwise, $j=n$ and $\overline{X}$ would not be singular at $P$) and we
    can write
    \begin{equation*}
      \overline{\Gamma}_{\epsilon} \equiv \overline{\varphi}_{\epsilon}(t) =
      (t^n, \sum_{m\leq i <j}a_it^{i-n} + a_j(\epsilon) t^{j-n} + \hot)
    \end{equation*}
    (with either $j>m$ and $a_m\neq 0$ or $j=m$ and $a_j(0)\neq 0$) which is not
    of irreducible Puiseux type (as $m-n<n$). In order to transform it to an
    irreducible Puiseux parametrization, one needs to extract $m-n-$th roots of
    the second coordinate:
    \begin{equation*}
      u = \sqrt[m-n]{\sum_{i=m}^{\infty} a_i (\epsilon) t^{i-n}}. 
    \end{equation*}
     Notice that such root is a holomorphic function defined in a neighborhood of
    $(\epsilon,t)=(0,0)$ since $a_m (0) \neq 0$. 
    We obtain
    \begin{equation*}
      t = \sum_{1\leq i < j-m+1} \alpha_iu^i + \alpha_j(\epsilon)u^{j-m+1}+\hot
    \end{equation*}
    where  $\alpha_j(0)\neq 0$ if $j=m$. From this, an irreducible
    Puiseux parametrization of $\overline{\Gamma}_{\epsilon}$ is given by
    \begin{equation*}
      \begin{split}
        &\overline{\Gamma}_{\epsilon} \equiv \overline{\varphi}_{\epsilon}(u) = \\
        &\left(
        \sum_{n\leq i < j-(m-n)} \overline{a}_i u^i +
        \overline{a}_j(\epsilon) u^{j-(m-n)} + \hot, u^{m-n}\right).
      \end{split}
    \end{equation*}
    with $\overline{a}_j(\epsilon)$ not constant. Hence,
    $(X,\Gamma)_{(0,0)} = (\overline{X}, \overline{\Gamma})_{P}+m-n$ and
    $\overline{n}$ is, in this case, $m-n$, which finishes the proof.
  \end{itemize}
\end{proof}

Lemma \ref{lem:contact-exponent-and-blow-up} states that
if $P_0=(0,0),P_1,\dots,P_N$ is the shared path between $X$ and $\Gamma$, then
\begin{equation*}
  (X, \Gamma)_{(0,0)} =
  \left\{
    \begin{array}{ll}
      \nu_0(\Gamma) & \mathrm{if}\, N=1 \\
      \nu_{P_1}(\Gamma_1 ) + (\overline{X}_1, \Gamma_1)_{P_1} & \mathrm{otherwise}
    \end{array}
  \right.
\end{equation*}
where $\overline{X}_1$ and $\Gamma_1$ are, respectively, the pull-back of $X$
and the strict transform of $\Gamma$ at $P_1$. Hence:
\begin{corollary}
\label{cor:contact-exponent}
  Let $P_0,\dots,P_N$ be the shared path between $X$ and $\Gamma$. Then 
  \begin{equation}
    (X,\Gamma)_{(0,0)} = n_{N-1} + \sum_{j=1}^{N-1}n_j
  \end{equation}
 where $n_0, n_1, \dots, n_N$ is the sequence of multiplicities of $\Gamma$ at the points 
 of the path it shares with $X$.
  So that the contact order between $X$ and $\Gamma$ depends only on $\Gamma$
  and $N$.
\end{corollary}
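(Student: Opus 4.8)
The plan is to establish the formula by induction on the length $N$ of the shared path, using Lemma~\ref{lem:contact-exponent-and-blow-up} to peel off one blow-up at a time.

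First I would dispatch the base case $N=1$. By the very definition of the shared path, $N=1$ means that the pull-back $\overline{X}$ of $X$ under the blow-up of $(0,0)$ is non-singular at $P_1$, so the first alternative of Lemma~\ref{lem:contact-exponent-and-blow-up} applies and gives $(X,\Gamma)_{(0,0)}=n_0$. Since the claimed right-hand side reads $n_{0}+\sum_{j=1}^{0}n_j=n_0$, there is nothing more to check. (The hypothesis that $\overline{X}$ does not fix the tangent cone of $\overline{\Gamma}$, which underlies that first alternative, is automatic here: on the first exceptional divisor there are no corners, and a non-singular point of $\overline{X}$ on an invariant irreducible divisor cannot also fix a second invariant curve through it — this is the content of the argument in Lemma~\ref{lem:shared-path-does-not-end-in-corner}.)

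For the inductive step, assume $N\ge 2$ and that the formula holds whenever the shared path has fewer than $N$ points after the origin. Let $\pi$ be the blow-up of $(0,0)$, $\overline{X}$ the pull-back of $X$, and $\overline{\Gamma}$ the strict transform, meeting $\pi^{-1}(0,0)$ at $P_1$. Since $N\ge 2$, the point $P_1$ is singular for $\overline{X}$, so the second alternative of Lemma~\ref{lem:contact-exponent-and-blow-up} gives
\[
(X,\Gamma)_{(0,0)}=(\overline{X},\overline{\Gamma})_{P_1}+n_1 ,
\]
where $n_1$ is the multiplicity of $\overline{\Gamma}$ at $P_1$. Next I would observe that the path shared by $\overline{X}$ and $\overline{\Gamma}$ is precisely the tail $P_1,P_2,\dots,P_N$, with associated multiplicity sequence $n_1,n_2,\dots,n_N$: this is immediate from the definition of the shared path together with Lemma~\ref{lem:deformation-of-strict-transform}, which guarantees that taking strict transforms commutes with forming the deformation, so that the contact exponent downstairs is the one attached to the deformation of $\overline{\Gamma}$ by $\overline{X}$ and the induction hypothesis applies verbatim. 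Feeding in that hypothesis yields $(\overline{X},\overline{\Gamma})_{P_1}=n_{N-1}+\sum_{j=2}^{N-1}n_j$, and substituting into the displayed identity produces $(X,\Gamma)_{(0,0)}=n_{N-1}+\sum_{j=1}^{N-1}n_j$, as wanted. The final assertion is then read off the formula directly: its right-hand side involves only the multiplicities of $\Gamma$ at its own infinitely near points — data intrinsic to $\Gamma$ — together with the integer $N$, and never $X$ itself.

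I do not expect a genuine obstacle here, since all the analytic content is already packaged in Lemma~\ref{lem:contact-exponent-and-blow-up}. The only points that demand care are bookkeeping: verifying that the tail $P_1,\dots,P_N$ is exactly the shared path of $(\overline{X},\overline{\Gamma})$ (so that the induction hypothesis is applicable without modification), correctly shifting the index in the multiplicity sum, and confirming the degenerate sub-cases $N=2$ of the inductive step and $N=1$ of the base case. The mild subtlety worth flagging in the write-up is the appeal to Lemma~\ref{lem:shared-path-does-not-end-in-corner} to ensure the last, non-singular step genuinely does not fix the tangent cone, so that the base case truly does apply at the bottom of the recursion.
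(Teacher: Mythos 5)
Your induction on $N$, peeling off one blow-up at a time via Lemma~\ref{lem:contact-exponent-and-blow-up}, is exactly how the paper obtains this corollary (the paper simply restates the lemma as the recursion $(X,\Gamma)_{(0,0)}=\nu_{P_1}(\Gamma_1)+(\overline{X}_1,\Gamma_1)_{P_1}$ with base case $\nu_0(\Gamma)$ when $N=1$, and iterates). Your bookkeeping of the index shift and the base case is correct, so the proposal matches the paper's argument.
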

Furthermore, the contact order of a vector field $X$ with
a branch $\Gamma$ is essentially that of the dual differential form with
$\Gamma$:

\begin{theorem}\label{the:contact-exponent-is-contact-minus-n}
  Let $X=A(x,y)\pt{x} + B(x,y)\pt{y}$ be a singular vector field at $\cc$ and
  $\Gamma$ be an analytic branch at $\cc$ with multiplicity $n$, not invariant
  for $X$. Let $\omega = -B(x,y)dx+A(x,y)dy$ be the ``dual'' differential form
  of $X$. Then
  \begin{equation*}
    \upsilon_{\Gamma}(\omega) = (X,\Gamma)_{(0,0)} + n.
  \end{equation*}
\end{theorem}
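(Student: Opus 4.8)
The plan is to prove the identity $\upsilon_{\Gamma}(\omega) = (X,\Gamma)_{(0,0)} + n$ by induction on the length $N$ of the shared path between $X$ and $\Gamma$, playing the two ``companion'' formulas — Corollary \ref{cor:kahlercontact-exponent-and-blow-up} for $\upsilon_{\Gamma}(\omega)$ and Corollary \ref{cor:contact-exponent} for $(X,\Gamma)_{(0,0)}$ — against each other. Indeed, both invariants have already been expressed purely in terms of the multiplicity sequence $n_0, n_1, \dots, n_N$ of $\Gamma$ at the points of the shared path: we have
\begin{equation*}
  \upsilon_{\Gamma}(\omega) = n_{N-1} + \sum_{j=0}^{N-1} n_j,
  \qquad
  (X,\Gamma)_{(0,0)} = n_{N-1} + \sum_{j=1}^{N-1} n_j.
\end{equation*}
Subtracting, the difference is exactly $n_0 = n$, which is the multiplicity of $\Gamma$ at $(0,0)$, and the theorem follows at once. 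So the ``real'' content is entirely contained in those two corollaries, and the present statement is essentially a bookkeeping corollary of them. I would therefore present the proof as: recall the two displayed formulas, subtract, and conclude.

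If instead one wanted a self-contained induction (not relying on the two corollaries having been stated in exactly that form), the base case $N=1$ is handled directly: when $\overline{X}$ is already non-singular at the first infinitely near point $P_1$, one has $(X,\Gamma)_{(0,0)} = n$ by the first bullet of Lemma \ref{lem:contact-exponent-and-blow-up}, while $\upsilon_{\Gamma}(\omega) = 2n$ because $X$ does not fix the tangent cone of $\Gamma$ (the dual form $\overline{\omega}_0$ can be written $(ax+by+\hot)dx + (cx+dy+\hot)dy$ with $a \neq 0$, giving $\upsilon = 2n$, exactly as in the proof of Corollary \ref{cor:kahlercontact-exponent-and-blow-up}); hence $\upsilon_{\Gamma}(\omega) = 2n = n + n = (X,\Gamma)_{(0,0)} + n$. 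For the inductive step one blows up once: Lemma \ref{lem:upsilon-omega-and-upsilon-transform} gives $\upsilon_{\Gamma}(\omega) = \upsilon_{\overline{\Gamma}}(\overline{\omega}) + n$, while Lemma \ref{lem:contact-exponent-and-blow-up} gives $(X,\Gamma)_{(0,0)} = (\overline{X},\overline{\Gamma})_P + \overline{n}$, where $\overline{n}$ is the multiplicity of $\overline{\Gamma}$ at $P$. Applying the induction hypothesis to the pair $(\overline{X}, \overline{\Gamma})$ — whose shared path has length $N-1$ — yields $\upsilon_{\overline{\Gamma}}(\overline{\omega}) = (\overline{X},\overline{\Gamma})_P + \overline{n}$, and substituting back gives $\upsilon_{\Gamma}(\omega) = (\overline{X},\overline{\Gamma})_P + \overline{n} + n = (X,\Gamma)_{(0,0)} + n$, closing the induction.

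The only point requiring care — and the place where I would be most careful in writing the details — is the bookkeeping around the last two points of the shared path, i.e. making sure the ``$n_{N-1}$ appears twice'' phenomenon is consistent between the two corollaries and with the non-singular endpoint convention built into Definition \ref{def:shared-path}; in particular one must check that the pull-back $\overline{X}_{N-1}$ does not preserve the tangent cone of $\overline{\Gamma}_{N-1}$ (which is precisely what guarantees $P_N$ is a regular point of $\overline{X}_N$, and which was used to get the factor $2n_{N-1}$ in Corollary \ref{cor:kahlercontact-exponent-and-blow-up}). Once this compatibility is in hand, the subtraction argument in the first paragraph is immediate and I would adopt it as the proof.
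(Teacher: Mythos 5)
Your first paragraph is exactly the paper's proof: the authors state that the theorem ``is an immediate consequence of Corollaries \ref{cor:kahlercontact-exponent-and-blow-up} and \ref{cor:contact-exponent}'', i.e.\ precisely the subtraction of the two multiplicity-sequence formulas yielding $n_0=n$. Your additional self-contained induction is a valid elaboration but not needed; the main argument matches the paper.
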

The result is an immediate consequence of Corollaries 
\ref{cor:kahlercontact-exponent-and-blow-up} and \ref{cor:contact-exponent}.
 
Corollary~\ref{cor:elimination-of-suitable-divisors} below will essentially
provide the analytic classification of plane branches except for Zariski's
invariant, which requires a specific definition. Consider an analytic branch
$\Gamma$ having irreducible Puiseux expansion
  \begin{equation*}
    \Gamma\equiv \varphi(t) = (x(t),y(t)) =
    \bigg(t^n,\, \sum_{i\geq n}a_it^i\bigg)
  \end{equation*}
  and let $X$ be a singular vector field at $\cc$ such that
  $(X,\Gamma)_{(0,0)}=j$. Let $\pi:\mathcal{X}\rightarrow \cc$ be the sequence
  of blow-ups producing the path $(P_i)_{i=0}^N$ shared by $X$ and $\Gamma$,
  where each $P_i$ (for $i=1\dots, N$) belongs to the irreducible component
  $E_i$ of the exceptional divisor $\pi^{-1}(0,0)$. We require some lemmas.  The
  next result can be seen as a corollary of the Poincar\'e-Hopf formula.

\begin{lemma}\label{lem:at-most-two-singular-points} 
  Let $X$ be a singular analytic vector field at $\cc$ and
  $\pi:\mathcal{X}\rightarrow \cc$ a finite sequence of blow-ups whose
  centres are singular points for each pull-back of $X$. If $E$ is an
  irreducible component of the exceptional divisor in $\pi^{-1}(0,0)$ which is
  not composed of singular points of the pull-back $\overline{X}$ of $X$ by
  $\pi$, then there are exactly two singular points for
  $\overline{X}$ in $E$ counting multiplicities.
\end{lemma}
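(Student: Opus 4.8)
The plan is to reduce the statement to a single blow-up by induction on the length of the sequence $\pi$, and then invoke the Poincar\'e--Hopf index theorem on the exceptional line $E\cong\mathbb{P}^1$. For the base case (one blow-up), write $X = A(x,y)\pt{x}+B(x,y)\pt{y}$ with $A,B\in\mathfrak{m}_0$ and decompose $A = A_1 + \hot$, $B = B_1 + \hot$ into homogeneous parts. In the chart $x=\overline{x}$, $y=\overline{x}\overline{y}$, the pull-back is $\overline{X} = \overline{x}\,\widetilde{A}\,\pt{\overline{x}} + \big(\widetilde{B}-\overline{y}\,\widetilde{A}\big)\pt{\overline{y}}$ where $\widetilde{A}(\overline{x},\overline{y}) = A(\overline{x},\overline{x}\overline{y})/\overline{x}$ and similarly for $\widetilde{B}$; the exceptional divisor $E$ is $\overline{x}=0$. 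If $E$ is not composed of singular points, then the restriction $\overline{X}|_E$ is the nonzero vector field $\big(B_1(1,\overline{y}) - \overline{y}A_1(1,\overline{y})\big)\pt{\overline{y}}$ (plus the analogous expression in the other chart), whose coefficient is a polynomial in $\overline{y}$ of degree at most $m+1$, where $m$ is the degree of $A_1,B_1$; a direct count of zeros on $\mathbb{P}^1$, or equivalently the Poincar\'e--Hopf formula for a vector field tangent to $\mathbb{P}^1$, yields exactly $2$ zeros counted with multiplicity (this is the statement that the sum of indices equals $\chi(\mathbb{P}^1)=2$). Here one must be careful that the ``singular points of $\overline{X}$'' meant in the lemma are the true equilibria \emph{of the pulled-back vector field}, which since $E$ is not all singular are exactly the zeros of $\overline{X}|_E$ together with the two corners (if any) created; but a corner is automatically a zero of $\overline{X}|_E$, so the count is consistent.

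For the inductive step, suppose the lemma holds for sequences of length $N-1$ and let $\pi = \pi_0\circ\cdots\circ\pi_{N-1}$ have length $N$, with $E\subset\pi^{-1}(0,0)$ an irreducible component not composed of singular points. There are two cases. If $E$ already appeared after the first $N-1$ blow-ups (i.e. the last centre $P_{N-1}$ does not lie on $E$, or the last blow-up does not alter a neighbourhood of the generic point of $E$), then the number of singular points on $E$ for the pull-back is unchanged by the last blow-up away from $P_{N-1}$, and near $P_{N-1}$ the count is preserved because blowing up a singular point replaces it by a divisor on which the pull-back has, again by the base case applied locally, exactly two singular points; so the global count on $E$ is stable, and we conclude by the inductive hypothesis applied to the length-$(N-1)$ sequence. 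If instead $E = E_N$ is the last component created, then $E$ arises from blowing up the centre $P_{N-1}$, which by hypothesis is a true singular point of the pull-back $\overline{X}_{N-1}$ of $X$ after $N-1$ blow-ups; applying the base-case analysis to $\overline{X}_{N-1}$ at $P_{N-1}$ shows that the pull-back to $E_N$ has exactly two singular points on $E_N$ counted with multiplicity, provided $E_N$ is not composed of singular points --- which is our hypothesis. In both cases the conclusion follows.

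The main obstacle I anticipate is bookkeeping the ``counting multiplicities'' clause correctly and making sure the local index at a point of $E$ (a zero of $\overline{X}|_E$) is the quantity for which Poincar\'e--Hopf sums to $\chi(\mathbb{P}^1)=2$, rather than some other local invariant; in particular one has to check that the possible common factor $\overline{x}^k$ that we deliberately do not remove from $\overline{X}$ (see the Remark following Definition \ref{def:pull-back-of-vector-field}) does not contribute to the zeros on $E$ itself and hence does not affect the count on $E$. A secondary subtlety is that $\overline{X}$ may vanish identically to some order along a \emph{different} component of the exceptional divisor meeting $E$ at a corner; at such a corner, $\overline{X}|_E$ still has a well-defined zero of finite multiplicity because by hypothesis $E$ is not composed of singular points, so the restriction is not identically zero, and the Poincar\'e--Hopf count on $E$ goes through unaffected. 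Once these points are dealt with, the argument is a routine application of the index theorem to $\overline{X}|_E$ on $\mathbb{P}^1$.
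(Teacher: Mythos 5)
Your core argument --- $E$ is an $\overline{X}$-invariant rational curve, so $\overline{X}|_E$ is a holomorphic vector field on $\mathbb{P}^1$, not identically zero by hypothesis, hence with exactly two zeros counted with multiplicity, and these zeros are precisely the singular points of $\overline{X}$ on $E$ --- is exactly the Poincar\'e--Hopf argument the paper has in mind (it states the lemma without proof, remarking only that it is a corollary of the Poincar\'e--Hopf formula). The induction on the length of $\pi$ is unnecessary, since this argument applies verbatim to $E$ inside the final surface $\mathcal{X}$; dropping it would also remove the one shaky step, namely your justification that the count on a pre-existing component $E$ is preserved by the last blow-up (that the new divisor carries two singular points is irrelevant there --- what one actually needs, and what is true by a short local computation, is that the order of the zero of the restricted field at the new corner equals the order of the old zero at the blown-up centre).
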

  \begin{lemma}\label{lem:behaviour-of-parametric-puiseux-under-bup}
    If the pull-back $\overline{X}$ of $X$ to $\mathcal{X}$ has a single
    singularity in $E_N$, then $\overline{X}_{\vert_{E_N}}$ is a constant vector
    field away from that singular point.
  \end{lemma}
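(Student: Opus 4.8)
The plan is to reduce the statement to an elementary fact about holomorphic vector fields on $\mathbb{P}^1$. First I note that $E_N$ is the exceptional divisor of the blow-up with centre $P_{N-1}$, and that $P_{N-1}$ is a singular point of the corresponding pull-back of $X$ — it lies on the shared path (when $N=1$, take $P_0=(0,0)$, a singular point of $X$). Since the exceptional divisor obtained by blowing up a singular point of a vector field is invariant for the pull-back (as already used in the proof of Lemma~\ref{lem:shared-path-does-not-end-in-corner}), the divisor $E_N$ is invariant for $\overline{X}=\overline{X}_N$, so $\overline{X}|_{E_N}$ is a genuine holomorphic vector field on $E_N\cong\mathbb{P}^1$. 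Next I record that the singular points of $\overline{X}$ lying on $E_N$ are exactly the zeros of $\overline{X}|_{E_N}$, with the same multiplicities: in local coordinates $(x,y)$ near a point of $E_N$ with $E_N=\{x=0\}$, invariance forces $\overline{X}(x)\in(x)$, so the $\partial/\partial x$-component of $\overline{X}$ vanishes identically on $E_N$, and a point of $E_N$ annihilates $\overline{X}$ if and only if it annihilates the $\partial/\partial y$-component, i.e. if and only if it is a zero of $\overline{X}|_{E_N}$. In particular $\overline{X}|_{E_N}\not\equiv 0$, because the last point $P_N$ of the shared path lies on $E_N$ and is, by definition, a regular point of $\overline{X}$. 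So the hypothesis of the lemma says exactly that $\overline{X}|_{E_N}$ has a unique zero $Q$ on $\mathbb{P}^1$ (necessarily of multiplicity $2$, since the zeros of a nonzero holomorphic vector field on $\mathbb{P}^1$ add up to $\deg T\mathbb{P}^1=2$; alternatively this is Lemma~\ref{lem:at-most-two-singular-points}, applicable as $E_N$ is not composed of singular points of $\overline{X}$).

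The core of the proof is then the following elementary fact: a holomorphic vector field $v\not\equiv 0$ on $\mathbb{P}^1$ possessing a single zero $Q$ equals $c\,\partial/\partial z$ for some $c\in\mathbb{C}^{*}$ in every affine coordinate $z$ on $\mathbb{P}^1\setminus\{Q\}$. To prove this I pick such a coordinate $z$ and write $v=f(z)\,\partial/\partial z$ with $f$ holomorphic on $\mathbb{C}=\mathbb{P}^1\setminus\{Q\}$; holomorphy of $v$ at $Q$ forces $f$ to be a polynomial of degree at most $2$, while the assumption that $Q$ is the only zero of $v$ means $f$ has no zero on $\mathbb{C}$, so $f$ is a nonzero constant by the fundamental theorem of algebra. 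Applying this to $v=\overline{X}|_{E_N}$, with $Q$ the singular point furnished by the hypothesis, gives that $\overline{X}|_{E_N}$ is a nonzero constant vector field on $E_N\setminus\{Q\}$, which is the assertion.

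I do not expect a serious obstacle here; the points needing care are the two bookkeeping remarks used above — the identification (with multiplicities) of the singular points of $\overline{X}$ on $E_N$ with the zeros of $\overline{X}|_{E_N}$, and the reading of ``constant away from that singular point'' as ``constant in any affine coordinate of $E_N$ that sends $Q$ to infinity''. It is worth adding that no preparation of $X$ and no case analysis on the length of the shared path is needed for this argument. One may further observe that for $N\ge 2$ the point $Q$ is forced to be the corner $E_N\cap E_{N-1}$: a corner of the exceptional divisor carries two invariant curves for the pull-back and is therefore a singular point of it, so if $\overline{X}$ has only one singular point on $E_N$ it must be that corner; and then $Q$ sits at infinity in the blow-up chart $x=\overline{x},\,y=\overline{x}\overline{y}$, so $\overline{X}|_{E_N}$ is literally constant in the coordinate $\overline{y}$.
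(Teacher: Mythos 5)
Your proof is correct and follows essentially the same route as the paper's: the paper likewise invokes Lemma~\ref{lem:at-most-two-singular-points} to get a single zero of multiplicity $2$ for $\overline{X}|_{E_N}$ on $\mathbb{P}^1$ and concludes it is conjugate to $\partial/\partial z$ on the affine chart missing that zero. You merely make explicit the bookkeeping the paper leaves implicit (invariance of $E_N$, identification of singularities of $\overline{X}$ on $E_N$ with zeros of the restriction, and the degree-$\le 2$ polynomial argument), all of which is accurate.
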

  \begin{proof}
    The vector field $\overline{X}_{\vert_{E_N}}$ has a singular point with
    multiplicity $2$ by Lemma \ref{lem:at-most-two-singular-points}.  It is
    analytically conjugated to $\partial / \partial z$ where $z$ is a complex
    coordinate in the chart $\mathbb{P}^1\mathbb{C} \setminus \{\infty\}$ of
    $\mathbb{P}^1\mathbb{C}$.
\end{proof}

\begin{lemma}\label{lem:arrive-to-contact-exponent}
  With the setting above, assume $N>1$ (or $(X,\Gamma)_{(0,0)}>n$, which is the
  same thing).  Let $\tilde{\Gamma}$ be another singular branch at $\cc$,
  topologically equivalent to $\Gamma$, admitting a parametrization
  \begin{equation*}
    \tilde{\Gamma}\equiv \tilde{\varphi}(t) = (\tilde{x}(t),\tilde{y}(t)) =
    \bigg(t^n,\, \sum_{i\geq n}\tilde{a}_it^i\bigg).
  \end{equation*}
  Let $(P_i)_{i=0}^{N}$, $(\tilde{P}_i)_{i=0}^{\tilde{N}}$ be the paths shared
  by $X$ and $\Gamma$, $\tilde{\Gamma}$, respectively. Then:
  $\tilde{a}_i =a_i \xi^{i}$ for some $\xi \in {\mathbb C}$ with $\xi^{n} =1$
  and any $n\leq i <j=(X,\Gamma)_{(0,0)}$ if and only if $N=\tilde{N}$ and also
  $P_k=\tilde{P}_k$ for $k=0,\dots,N-1$.
\end{lemma}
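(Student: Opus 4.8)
The plan is to reduce to $\xi = 1$ and then argue by induction on $N$, removing one blow-up at a time.

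\emph{Reduction.} The path shared by $X$ with a branch does not depend on the parametrization chosen for that branch, and the only reparametrizations preserving the normalisation $\tilde x(t)=t^n$ are $t\mapsto \xi t$ with $\xi^n=1$. Hence the left-hand condition ``$\tilde a_i=a_i\xi^i$ for $n\le i<j$ and some $\xi^n=1$'' is equivalent to saying that, after replacing $\tilde\varphi(t)$ by $\tilde\varphi(\xi t)$ for a suitable $n$-th root of unity, $\Gamma$ and $\tilde\Gamma$ have the same Puiseux expansion up to order $j$. So I would assume $\xi=1$, write $\Gamma\equiv_j\tilde\Gamma$ for this agreement of truncated expansions, and (using the $y\mapsto y+a(x)$ normalisation) take both parametrizations prepared. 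Since $\tilde\Gamma$ is topologically equivalent to $\Gamma$, the two multiplicity sequences coincide, so Corollary~\ref{cor:contact-exponent} gives $(X,\tilde\Gamma)_{(0,0)}=j$ whenever $\tilde N=N$.

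\emph{The one-blow-up step.} Blow up $P_0=(0,0)$ and pass to $\overline X,\overline\Gamma,\overline{\tilde\Gamma}$. As both parametrizations are prepared, $\Gamma$ and $\tilde\Gamma$ have tangent cone $(y=0)$, so $P_1=\tilde P_1$; since $N>1$, $\overline X$ is singular at $P_1$ and the path shared by $\overline X$ and $\overline\Gamma$ is $(P_1,\dots,P_N)$, whence Lemma~\ref{lem:contact-exponent-and-blow-up} gives $(\overline X,\overline\Gamma)_{P_1}=j-n_1$ with $n_1$ the multiplicity of $\overline\Gamma$ at $P_1$. The technical core is then the claim that $\Gamma\equiv_j\tilde\Gamma$ if and only if $\overline\Gamma\equiv_{j-n_1}\overline{\tilde\Gamma}$ at $P_1$. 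To see this one repeats the case split from the proof of Lemma~\ref{lem:contact-exponent-and-blow-up}: if the first exponent $m>n$ with $a_m\neq0$ satisfies $m\ge 2n$, then $n_1=n$ and the strict transform merely shifts all exponents down by $n$, so the equivalence is immediate; if $n<m<2n$, then $n_1=m-n$, one must extract an $(m-n)$-th root of the $y$-coordinate and swap the two axes, and here Lemma~\ref{lem:root-of-puiseux-series} guarantees that the root depends in a triangular way on the coefficients, so agreement of initial segments is preserved and the cut-off order $j$ becomes exactly $j-(m-n)=(\overline X,\overline\Gamma)_{P_1}$.

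\emph{Closing the induction.} If $N>2$, the path shared by $\overline X$ and $\overline\Gamma$ has length $N-1\ge 2$, so applying the inductive hypothesis to $(\overline X,\overline\Gamma)$ and $(\overline X,\overline{\tilde\Gamma})$ yields $\overline\Gamma\equiv_{j-n_1}\overline{\tilde\Gamma}$ if and only if $\tilde N-1=N-1$ and $P_k=\tilde P_k$ for $k=1,\dots,N-1$; combining this with $P_0=\tilde P_0$ and $P_1=\tilde P_1$ gives the statement. For the base case $N=2$ I would argue directly: the second infinitely near point of $\Gamma$ cannot be a satellite point, because a corner of the exceptional divisor is invariant, hence singular, for the pull-back and therefore cannot be the final point $P_2$ of a shared path; this forces $m>2n$, hence $j=2n$, and in prepared form $a_i=\tilde a_i=0$ for all $n\le i<2n$, so $\Gamma\equiv_j\tilde\Gamma$ holds automatically. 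On the other side $P_1=\tilde P_1$ is automatic, the point $P_2=\tilde P_2$ is the same coordinate-determined point of $E_2$ for both curves, and whether $\overline X$ is non-singular there depends only on $X$ (not on the curve), so $\tilde N=2=N$; thus both sides of the equivalence hold. Lemmas~\ref{lem:at-most-two-singular-points} and~\ref{lem:behaviour-of-parametric-puiseux-under-bup} are what allow one to locate $P_2$ and to control the regularity of $\overline X$ on $E_2$ in this last step.

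\emph{Main obstacle.} The delicate point is the equivalence $\Gamma\equiv_j\tilde\Gamma\iff\overline\Gamma\equiv_{j-n_1}\overline{\tilde\Gamma}$ in the case $n<m<2n$: after the root extraction the two coordinates exchange roles and the exponents get rescaled, so one has to keep precise track of which Puiseux coefficient of $\overline\Gamma$ comes from which coefficient of $\Gamma$ and check that the truncation orders correspond exactly. This is where the explicit triangular shape of the coefficient recursion in Lemma~\ref{lem:root-of-puiseux-series} is indispensable; a secondary care is to make sure that reducing to prepared parametrizations is harmless, i.e. that the location of $P_2$ (and more generally of each non-last point) is governed by the ambient coordinates rather than by the coefficients $a_{2n},a_{3n},\dots$ that vanish in prepared form.
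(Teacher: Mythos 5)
Your overall strategy --- induction on the length of the shared path, transporting the statement through one blow-up exactly as in Lemma~\ref{lem:contact-exponent-and-blow-up}, with the Puiseux coefficients below order $j$ encoding the infinitely near points $P_0,\dots,P_{N-1}$ --- is the same as the paper's (whose proof is only a sketch of this induction). But there is a genuine gap where you conclude $\tilde N=N$. You assert that ``$P_2=\tilde P_2$ is the same coordinate-determined point of $E_2$ for both curves'' and that regularity of $\overline X$ there ``depends only on $X$''. This is false: the last points $P_N$ and $\tilde P_N$ are where the strict transforms meet $E_N$, and their positions are governed precisely by the coefficients of order $j=(X,\Gamma)_{(0,0)}$ (in your base case, by $a_{2n}$ and $\tilde a_{2n}$), which the hypothesis $\tilde a_i=a_i\xi^i$ for $i<j$ does \emph{not} control --- indeed the whole point of Corollary~\ref{cor:elimination-of-suitable-divisors} is that this last intersection point moves along $E_N$ as the curve is deformed. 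So $\tilde P_N$ may land anywhere on $E_N$ away from the corner, and to deduce that it is a regular point of $\overline X$ (hence that the shared path of $X$ and $\tilde\Gamma$ stops, i.e.\ $\tilde N=N$) you must know that the corner is the \emph{only} singular point of $\overline X$ on $E_N$. Lemma~\ref{lem:at-most-two-singular-points} allows a second, free singular point on $E_N$, and if $\tilde a_j$ placed $\tilde P_N$ there one would get $\tilde N>N$ while the coefficient condition still holds. This is exactly the point the paper's proof singles out (``the corner is the unique singular point of $\overline X$ in $E_N$''), guaranteed in the contexts where the lemma is used (nilpotent $X$ via Proposition~\ref{pro:nilpotent-implies-shared-ends-in-free}, or the single-singularity hypothesis of Corollary~\ref{cor:elimination-of-suitable-divisors}); your argument omits it and substitutes an incorrect identification of $P_N$ with $\tilde P_N$.

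A secondary, repairable issue: you cannot ``take both parametrizations prepared'', since the substitution $y\mapsto y+a(x)$ preparing $\Gamma$ need not prepare $\tilde\Gamma$, and the coefficients $a_{kn}$ it kills are part of the data being compared (they locate the free points $P_k$). What is true is that the statement is invariant under applying one and the same change $(x,y)\mapsto(x,y+a(x))$ to $\Gamma$, $\tilde\Gamma$ and $X$ simultaneously (for $n\mid i$ one has $\xi^i=1$, so $\tilde a_i=a_i$ is preserved), after which $\Gamma$ is prepared; the vanishing of $\tilde a_i$ for $n<i<2n$ must then be extracted from the topological equivalence, not from ``preparing'' $\tilde\Gamma$. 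With these two points repaired, the induction you outline does reproduce the paper's argument.
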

\begin{proof}
  The result is easily proved using an inductive argument similar to that of
  Lemma \ref{lem:contact-exponent-and-blow-up}, as the coefficients in the
  Puiseux parametrization of a branch are determined by its infinitely near
  points.  Let us clarify the role of the property $N=\tilde{N}$.  The divisor
  $E_N$ has exactly a corner point that is necessarily the unique singular point
  of $\overline{X}$ in $E_N$. The condition on the coefficients of $\Gamma$ and
  $\tilde{\Gamma}$ implies that none of these curves meet $E_N$ in the corner
  point and hence $P_N$ and $\tilde{P}_N$ are regular points of $\overline{X}$
  and $N =\tilde{N}$.
\end{proof}
Next, we see that if $(X, \Gamma)_{(0,0)} = j$ then the term of order $j$ can be
eliminated from a Puiseux parametrization of $\Gamma$ as long as $X$ has a
single singularity on the last divisor of the shared path.
 
\begin{corollary}\label{cor:elimination-of-suitable-divisors}
  With the same setting, if $\overline{X}$ has a single singularity in $E_N$
  then $\Gamma$ is analytically equivalent via a unique 
  local diffeomorphism in the one parameter group generated by $X$
  to a branch $\tilde{\Gamma}$ such that
  \begin{equation*}
    \tilde{\Gamma}\equiv \tilde{\varphi}(t) = (\tilde{x}(t),\tilde{y}(t))=
    \bigg(t^n,\, \sum_{i\geq n}\tilde{a}_it^i\bigg)
  \end{equation*}
  with $\tilde{a}_i = a_i$ for $n\leq i<j$ and $\tilde{a}_j=0$.
\end{corollary}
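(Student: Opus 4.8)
The plan is to realise $\tilde\Gamma$ as a member $\Gamma_{\epsilon_0}=\psi_{-\epsilon_0}(\Gamma)$ of the deformation of $\Gamma$ by $X$, and to show that the coefficient $\tilde a_j(\epsilon)$ one wants to annihilate is an \emph{affine} (hence bijective) function of $\epsilon$, the single-singularity hypothesis being exactly what forces affineness rather than mere non-constancy.

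First I would set up the deformation family $\{\Gamma_\epsilon\}$ of $\Gamma$ by $X$. By Corollary~\ref{cor:curve-and-puiseux-deformation-and-field}, for $\epsilon$ near $0$ it has an irreducible Puiseux parametrization $(t^n,\sum_{i\geq n}\tilde a_i(\epsilon)t^i)$ with each $\tilde a_i$ holomorphic and $\tilde a_i(0)=a_i$; since $(X,\Gamma)_{(0,0)}=j$, Definition~\ref{def:contact-exponent} gives $\tilde a_i(\epsilon)\equiv a_i$ for $n\leq i<j$ and $\tilde a_j$ non-constant. Applying Lemma~\ref{lem:arrive-to-contact-exponent} to $\tilde\Gamma=\Gamma_\epsilon$ (which is topologically equivalent to $\Gamma$ by Proposition~\ref{pro:deformations-share}) with $\xi=1$ --- and directly when $N=1$ --- the branch $\Gamma_\epsilon$ shares with $X$ a path whose first $N$ points are exactly $P_0,\dots,P_{N-1}$. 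Hence blowing up $P_0,\dots,P_{N-1}$ (a sequence whose centres are singular for the successive pull-backs) is legitimate for $\Gamma_\epsilon$ as well, and by Lemma~\ref{lem:deformation-of-strict-transform} the strict transform $\overline{\Gamma_\epsilon}^{(N)}$ equals the deformation of $\overline{\Gamma}^{(N)}$ by the pull-back $\overline X^{(N)}$.

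The core step is to track the point where $\overline{\Gamma_\epsilon}^{(N)}$ meets $E_N$. The vector field $\overline X^{(N)}$ is regular at $P_N$ and, $E_N$ being invariant, tangent to it there; thus for small $\epsilon$ the deformation of $\overline{\Gamma}^{(N)}$ is the genuine image $\overline\psi^{(N)}_{-\epsilon}(\overline{\Gamma}^{(N)})$ under the flow of $\overline X^{(N)}$, meeting $E_N$ in the single point $\overline\psi^{(N)}_{-\epsilon}(P_N)$. Here the hypothesis enters: by Lemma~\ref{lem:behaviour-of-parametric-puiseux-under-bup}, $\overline X^{(N)}|_{E_N}$ is, away from its unique (double) singularity --- which is the corner $E_N\cap E_{N-1}$ --- analytically conjugate to $\partial/\partial z$ on $E_N\setminus\{\text{corner}\}\cong\mathbb C$. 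So in this coordinate $z(\overline\psi^{(N)}_{-\epsilon}(P_N))=z(P_N)-\epsilon$, a globally affine function of $\epsilon$ (in particular the intersection point never reaches the corner, reconfirming that $\Gamma_\epsilon$ shares a path of length $N$). Finally, retracing the coordinate changes in the proof of Lemma~\ref{lem:contact-exponent-and-blow-up} through the blow-ups of $\pi$ --- at each of which the leading free coefficient of the parametrization is transformed by an affine map with constant, non-vanishing leading coefficient (the identity in the case $m\geq 2n$, and a map depending only on the already-fixed lower coefficients in the case $n<m<2n$) --- one sees that $\tilde a_j(\epsilon)$ is an affine function of $z(\overline\psi^{(N)}_{-\epsilon}(P_N))$, hence affine in $\epsilon$. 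Being non-constant, $\tilde a_j:\mathbb C\to\mathbb C$ is a bijection: there is a unique $\epsilon_0$ with $\tilde a_j(\epsilon_0)=0$, and $\tilde\Gamma:=\Gamma_{\epsilon_0}=\psi_{-\epsilon_0}(\Gamma)$ has the desired parametrization. The diffeomorphism $\psi_{-\epsilon_0}$ (a genuine element of $\mathrm{Diff}(\mathbb C^2,0)$, since $\psi_{-\epsilon_0}=(\psi_{-\epsilon_0/m})^{\circ m}$ for $m$ large) is the unique one in the one-parameter group of $X$ that works, because $s\mapsto\tilde a_j(s)$ is injective.

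I expect the main obstacle to be exactly the affineness of $\tilde a_j(\epsilon)$, i.e.\ combining Lemma~\ref{lem:behaviour-of-parametric-puiseux-under-bup} with the bookkeeping of the blow-up coordinate changes. That the single-singularity hypothesis is indispensable can be seen from the group law $\Gamma_{\epsilon+\epsilon'}=(\Gamma_\epsilon)_{\epsilon'}$: in general this only forces $\tilde a_j(\epsilon)=Ae^{k\epsilon}+B$, and if $\overline X^{(N)}|_{E_N}$ had two simple singularities it would be conjugate to $z\,\partial/\partial z$, the flow would act by $z\mapsto e^{s}z$, giving $k\neq 0$ --- a function which need not take the value $0$.
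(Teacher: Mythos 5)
Your proposal is correct and follows essentially the same route as the paper: realise $\tilde\Gamma$ as a member $\Gamma_{s_0}$ of the flow-deformation, use Lemma~\ref{lem:behaviour-of-parametric-puiseux-under-bup} to see that the intersection point of the strict transform with $E_N\setminus\{\text{corner}\}$ moves by translation in $s$, and conclude that $s\mapsto a_j(s)$ is a bijection of $\mathbb{C}$. The only (immaterial) difference is in the last step: you establish affineness of $a_j$ directly by bookkeeping the blow-up coordinate changes, whereas the paper deduces injectivity of the entire function $a_j$ from the distinctness of the intersection points and then gets surjectivity from the classical fact that an injective entire function is affine.
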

\begin{proof} 
  Let $R$ be the corner point of $E_N$.  Let
  ${\{ \psi_s \}}_{s \in {\mathbb C}}$ be the one parameter group associated to
  $X$.  We denote $\Gamma_s = \psi_{s} (\Gamma)$.  The Puiseux parametrization
  of $\Gamma_s$ is of the form
\begin{equation*}
    {\Gamma}_s \equiv {\varphi}_s (t) = ({x}_s (t),{y}_s (t)) =
    \bigg(t^n,\, \sum_{n \leq i < j } {a}_i t^i + \sum_{i \geq j } {a}_i (s) t^i  \bigg).
  \end{equation*}
where $j= (X,\Gamma)_{(0,0)}$ and $a_{j}(s)$ is not a constant function. 
Given $s, s' \in {\mathbb C}$ such that $s \neq s'$, the strict transforms of 
$\Gamma_{s}$ and $\Gamma_{s'}$ pass through different points of $E_{N} \setminus \{R\}$
by Lemma \ref{lem:behaviour-of-parametric-puiseux-under-bup}.
As a consequence the function $a_j$ is injective and hence
is also surjective. 
Thus there exists a unique $s_0 \in {\mathbb C}$ such that $a_j (s_0)=0$.
The curve $\tilde{\Gamma}$ is the curve $\Gamma_{s_0}$.
\end{proof}

\subsection{The moduli problem and holomorphic flows}
As an example of the relevance of our tools, we give a solution to
Zariski's moduli problem \cite{Zariski4} using flows instead of just analytic
diffeomorphisms. We first need some elementary results on the type of
singularities arising after a sequence of blow-ups of a singular analytic vector
field. Specifically, nilpotent vector fields only become regular on what are
called \emph{free exceptional divisors}. Notice that any vector field of
multiplicity at least $2$ is nilpotent since it has vanishing linear part. Then
we shall compute Zariski's $\lambda$ invariant \cite{Zariski-1966} as some
minimum of contacts between the branch and vector fields.

\strut

In this subsection, we fix a singular vector field $X$ at $\cc$ and a chain of
blow-ups
\begin{equation*}
    \mathcal{X}=\mathcal{X}_{N}\xrightarrow{\pi_{N-1}}
    \mathcal{X}_{N-1}\xrightarrow{\pi_{N-2}}\cdots
    \xrightarrow{\pi_1}\mathcal{X}_{1}
    \xrightarrow{\pi_0}\cc
\end{equation*}
each $\pi_i$ having centre $P_{i}$  belonging to $E_i=\pi_{i-1}^{-1}(P_{i-1})$,
the exceptional divisor corresponding to the blow-up of $P_{i-1}$. We call
$\overline{X}_i$ the pull-back of $X$ to $\mathcal{X}_i$, which we assume is
singular at $P_i$ for $i=0,\dots, N-1$ (writing $\overline{X}_0=X$ and $P_0=P$)
and we assume $P_N$ is a non-singular point of $\overline{X}_N$ in
$E_N=\pi^{-1}_{N-1}(P_{N-1})$.  We know that all the exceptional divisors
$E_1, \dots, E_N$ are invariant for $X_N$.

\begin{definition}\label{def:free-divisor}
  A divisor $E_i$ is \emph{free} if either $i=1$ or $P_{i-1}\in E_k$ implies
  $k=i-1$. In other words, if $E_i$ meets only one other exceptional divisor in
  $\mathcal{X}_i$, or what amounts to the same, if $P_{i-1}$ is not the
  intersection of two exceptional divisors.
\end{definition}

We need several technical results:
\begin{lemma}\label{lem:nilpotent-and-two-invariant-implies-mult-2}
  If $Y$ is a nilpotent singular vector field at $\cc$ admitting two transverse
  non-singular invariant curves then its multiplicity is strictly greater than
  $1$.
\end{lemma}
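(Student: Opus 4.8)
The plan is to prove Lemma~\ref{lem:nilpotent-and-two-invariant-implies-mult-2} by contradiction: suppose $Y$ is a nilpotent singular vector field at $\cc$ of multiplicity exactly $1$ admitting two transverse non-singular invariant curves, and derive a contradiction with nilpotency. First I would normalize coordinates: since the two invariant curves are non-singular and transverse, we may choose local coordinates $(x,y)$ so that they become $(x=0)$ and $(y=0)$. Invariance of $(x=0)$ means $Y(x) \in (x)$, and invariance of $(y=0)$ means $Y(y) \in (y)$; hence $Y = x\,a(x,y)\pt{x} + y\,b(x,y)\pt{y}$ for some $a,b \in \mathcal{O}$. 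Because $Y$ has multiplicity $1$, not both of $xa$ and $yb$ vanish to order $\geq 2$, so $a(0,0) \neq 0$ or $b(0,0) \neq 0$.

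The key step is then to examine the linear part of $Y$ at the origin. From the form $Y = xa(x,y)\pt{x} + yb(x,y)\pt{y}$, the linear (degree-one) part is $a(0,0)\,x\pt{x} + b(0,0)\,y\pt{y}$, which is a diagonal linear vector field with eigenvalues $a(0,0)$ and $b(0,0)$ along the axes. For $Y$ to be nilpotent, its linear part must be a nilpotent linear endomorphism of $\mathfrak{m}/\mathfrak{m}^2$; but a nonzero diagonal matrix is never nilpotent, and a diagonal matrix is nilpotent only if it is zero, i.e.\ $a(0,0) = b(0,0) = 0$. This forces $xa$ and $yb$ both to lie in $\mathfrak{m}^2$, so $Y(\mathfrak{m}) \subset \mathfrak{m}^2$, contradicting the assumption that $Y$ has multiplicity exactly $1$ (equivalently, that its multiplicity is not strictly greater than $1$). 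Therefore no such $Y$ exists, which is exactly the statement.

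I expect the only genuine subtlety to be the very first reduction, namely justifying that two transverse non-singular invariant curves of a singular vector field can be simultaneously straightened to the coordinate axes by a local biholomorphism, and that this operation preserves both nilpotency of the linear part (conjugation by a linear map, up to higher order terms, preserves the nilpotency class of the linear part) and the multiplicity of the vector field (multiplicity is a biholomorphic invariant, as noted in the ``Notation and Definitions'' section). Once coordinates are fixed, everything is a one-line computation with the linear part. The main obstacle, such as it is, is purely bookkeeping: making sure that ``invariant'' is being used in the sense that $Y$ is tangent to the curve, so that $Y(x)\in(x)$ and $Y(y)\in(y)$, rather than merely that the curve is a union of orbits in some weaker sense; in the analytic setting these coincide, so no real difficulty arises.
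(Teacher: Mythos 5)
Your proof is correct and is essentially the paper's own argument: the paper simply observes that the two transverse non-singular invariant curves force the linear part of $Y$ to be diagonalisable, hence (being nilpotent) zero, which is exactly what your explicit computation in straightened coordinates shows. The extra care you take in justifying the coordinate normalization and the meaning of invariance is fine but not a different route.
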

\begin{proof}
  Since $Y$ has two transverse non-singular invariant curves, its linear part
  must be diagonalisable. Since this linear part is nilpotent by hypothesis, it
  must be zero, i.e. $Y$ has multiplicity at least $2$.
\end{proof}

Nilpotent vector fields become regular only at free divisors:
\begin{lemma}\label{lem:nilpotent-implies-regular-at-free}
  With the previous notation, assume $X$ is nilpotent.  Then $E_N$
  (the divisor containing $P_N$, point at which $\overline{X}_N$ is regular) is
  a free divisor. Even more, there is only one singularity of $\overline{X}_N$
  in $E_N$ and if $N>1$, it is the intersection of $E_N$ with the only other
  divisor it meets (actually, $E_{N-1}$).
\end{lemma}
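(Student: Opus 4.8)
The plan is to argue by induction along the chain of blow-ups, tracking two things simultaneously: that the pull-backs $\overline{X}_i$ remain nilpotent (or of multiplicity $\geq 2$) until the very last step, and that the point at which regularity first appears cannot be a corner. First I would observe that a nilpotent vector field cannot become regular at a corner of the exceptional divisor: at a corner there are two transverse non-singular invariant curves (the two components of the divisor, which are always invariant after blowing up a singular point, by the same argument used in Lemma \ref{lem:shared-path-does-not-end-in-corner}), so by Lemma \ref{lem:nilpotent-and-two-invariant-implies-mult-2} the vector field there has multiplicity at least $2$, hence is singular. Therefore $P_{N-1}$ (the centre of the last blow-up, which sits on $E_N$ after blowing up) being a corner would force $\overline{X}_N$ to be singular at the corner point of $E_N$; but we must rule out that $\overline{X}_N$ is regular \emph{elsewhere} on $E_N$ while singular at the corner. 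This is where the nilpotency has to be propagated: I would show that as long as the centre $P_i$ of each blow-up is a singular point lying on (a corner formed by) two invariant divisors, the pull-back $\overline{X}_{i+1}$ is again nilpotent — in fact of multiplicity $\geq 2$ by Lemma \ref{lem:nilpotent-and-two-invariant-implies-mult-2} applied to the two divisor components through $P_i$ — and hence its restriction to the new divisor $E_{i+1}$ is everywhere singular, contradicting that $P_N\in E_N$ is regular.

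More precisely, the induction runs as follows. Suppose $E_N$ is not free. Then $P_{N-1}$ is a corner, i.e. lies on $E_N$'s predecessor divisor and on one more divisor, say $E_{N-1}$ and some $E_k$ with $k<N-1$; equivalently $P_{N-1}$ is the intersection of two exceptional divisors in $\mathcal{X}_{N-1}$. Both of these are non-singular curves, transverse at $P_{N-1}$, and both are invariant for $\overline{X}_{N-1}$ (being exceptional divisors produced by blow-ups at singular points). Since $\overline{X}_{N-1}$ is singular at $P_{N-1}$ by hypothesis and nilpotent (which I establish by descending the same dichotomy from $X$), Lemma \ref{lem:nilpotent-and-two-invariant-implies-mult-2} gives that $\overline{X}_{N-1}$ has multiplicity $\geq 2$ at $P_{N-1}$. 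A vector field of multiplicity $m\geq 2$ pulls back, under the blow-up of $P_{N-1}$, to a vector field whose restriction to the new divisor $E_N$ vanishes identically to order $\geq m-1\geq 1$ — concretely, in the chart coordinates $x=\overline x,\ y=\overline x\overline y$ the $\pt{\overline y}$-component acquires a factor $\overline x^{\,m-1}$ and the $\pt{\overline x}$-component is divisible by $\overline x^{\,m}$, so every point of $E_N$ is an equilibrium point of $\overline{X}_N$. This contradicts the assumption that $P_N\in E_N$ is a regular point of $\overline{X}_N$. Hence $E_N$ must be free; that is, $P_{N-1}$ is a smooth point of the exceptional divisor in $\mathcal{X}_{N-1}$, sitting on the single divisor $E_{N-1}$ (or $N=1$).

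For the "even more" part: once we know $\overline{X}_N$ is regular at some point of $E_N$, it is in particular not the case that $E_N$ consists of singular points, so Lemma \ref{lem:at-most-two-singular-points} applies and tells us there are exactly two singular points of $\overline{X}_N$ on $E_N$ counted with multiplicity. If $N>1$, the corner $E_N\cap E_{N-1}$ is a singular point of $\overline{X}_N$: it is a corner, so by the corner argument above ($\overline{X}_N$ nilpotent there plus two transverse invariant divisor components, via Lemma \ref{lem:nilpotent-and-two-invariant-implies-mult-2}) it has multiplicity $\geq 2$, hence contributes multiplicity $\geq 2$ to the Poincaré–Hopf count. Since the total is exactly $2$, this corner is the unique singularity, with multiplicity precisely $2$, and there are no others; in particular $E_N$ meets only $E_{N-1}$ among the exceptional divisors through its singular locus. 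If $N=1$ there is no corner and the statement about uniqueness of the singularity follows directly from the multiplicity-$2$ count in Lemma \ref{lem:at-most-two-singular-points}, with the single singularity necessarily of multiplicity $2$.

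I expect the main obstacle to be the bookkeeping in the inductive step that keeps nilpotency alive along the chain: one has to make sure that at each centre $P_i$ the hypotheses of Lemma \ref{lem:nilpotent-and-two-invariant-implies-mult-2} genuinely hold — that there really are two transverse non-singular invariant curves there — which is automatic when $P_i$ is a corner but needs the observation that all exceptional divisors are invariant (already recorded in the text before Definition \ref{def:free-divisor}) and that the strict transforms of the two meeting divisors stay smooth and transverse under the relevant blow-ups. The pull-back multiplicity computation itself is the routine calculation displayed in the Remark after Definition \ref{def:pull-back-of-vector-field}, so it is not the difficulty; the difficulty is organizing the descent so that "regular only at a free divisor" is forced rather than merely consistent.
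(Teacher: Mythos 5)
Your treatment of the freeness claim matches the paper's: both hinge on Lemma \ref{lem:nilpotent-and-two-invariant-implies-mult-2} applied at the corner $P_{N-1}$, followed by the observation that a vector field of multiplicity at least $2$ cannot become regular anywhere on the divisor created by a single blow-up (the paper phrases this as ``the multiplicity decreases at most by one''; you do the chart computation explicitly; both are fine). The propagation of nilpotency along the centres, which you flag as the main bookkeeping burden, is also exactly what the paper does (it calls it ``a simple computation'').

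Where you diverge is the uniqueness of the singularity on $E_N$, and there is a genuine gap there. The paper derives uniqueness from the fact that the linear part of $\overline{X}_{N-1}$ at $P_{N-1}$ is a \emph{nonzero} nilpotent matrix (nonzero because $P_N$ is regular, so the multiplicity at $P_{N-1}$ is exactly $1$), hence has a single eigendirection, and the singular points of the pull-back on $E_N$ are precisely the eigendirections. You instead invoke the Poincar\'e--Hopf count of Lemma \ref{lem:at-most-two-singular-points}. For $N>1$ this works once you know the corner contributes multiplicity $\geq 2$, but that requires $\overline{X}_N$ to be nilpotent \emph{at the corner} $E_N\cap E_{N-1}$, which is not one of the blow-up centres, so your inductive propagation of nilpotency (which runs along the $P_i$) does not literally cover it; you need the extra computation that the pull-back of a nilpotent field has nilpotent linear part at every singular point of the new divisor. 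More seriously, for $N=1$ your claim that uniqueness ``follows directly from the multiplicity-$2$ count'' is not an argument: a total count of $2$ is perfectly consistent with two distinct simple singular points. The fix is the same missing computation: at any singular point of $\overline{X}_1$ on $E_1$ the linear part is nilpotent, so its eigenvalue along the invariant divisor $E_1$ is $0$ and the zero of the restriction $\overline{X}_1|_{E_1}$ there has order at least $2$; only then does the count of $2$ force a single singular point. Equivalently, replace this step by the paper's single-eigendirection observation.
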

\begin{proof}
  If $N\leq 2$, then $E_N$ is automatically free and the statement holds. Assume
  then that $N>2$. A simple computation shows that $X_i$ has nilpotent linear
  part at $P_i$ for all $i=1,\dots, N-1$. If $E_N$ were not free, then $P_{N-1}$
  would belong to $E_{N-1}$ and another $E_k$ for $k\neq N-1$. As all the
  exceptional divisors are invariant, $P_{N-1}$ would be a singular point for
  $\overline{X}_{N-1}$ with two transverse non-singular invariant curves. We
  know that $\overline{X}_{N-1}$ has nilpotent linear part, hence $X_{N-1}$
  would have multiplicity at least $2$, by Lemma
  \ref{lem:nilpotent-and-two-invariant-implies-mult-2}. This prevents $P_N$ from
  being regular for $\overline{X}_N$, as the multiplicity of a singular vector
  field decreases at most by one after a single blow-up.

  The existence of a single singularity in $E_{N}$ is a consequence of the
  existence of a single eigenvector for the linear part of $\overline{X}_{N-1}$
  at $P_{N-1}$. If $N>1$ then the intersection of $E_N$ with the only other
  divisor it meets (which is, of necessity, $E_{N-1}$) must be a singular point
  for $\overline{X}_N$, as there are two invariant varieties through that point.
\end{proof}

  
A straightforward application of Lemma
\ref{lem:nilpotent-implies-regular-at-free} gives:
\begin{proposition}\label{pro:nilpotent-implies-shared-ends-in-free}
  Let $\Gamma$ be a branch through $\cc$ and $X$ be a  nilpotent
   singular vector field at $\cc$ for which $\Gamma$ is not
  invariant. Assume $(P_i)_{i=0}^N$ is the path shared by $\Gamma$ and $X$. Then
  $P_N$ is a non-singular point of $E_N$, $\overline{X}_N$ has a single singular
  point in $E_N$ and if $N>1$ then this singular point is $E_N\cap E_{N-1}$.
\end{proposition}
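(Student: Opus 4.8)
The plan is to combine Proposition~\ref{pro:non-branch-separates} (which produces the shared path $(P_i)_{i=0}^N$ and guarantees $P_N$ is a regular point of $\overline{X}_N$) with Lemma~\ref{lem:nilpotent-implies-regular-at-free}. First I would observe that the hypothesis of Lemma~\ref{lem:nilpotent-implies-regular-at-free} is \emph{exactly} met in this situation: the chain of blow-ups $\pi_i:\mathcal{X}_{i+1}\to\mathcal{X}_i$ is the one with centres $P_0,P_1,\dots,P_{N-1}$, each $P_i$ being a singular point of $\overline{X}_i$ (for $0\le i\le N-1$), while $P_N$ is a regular point of $\overline{X}_N=\overline{X}$. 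This is precisely the configuration assumed in the statement of Lemma~\ref{lem:nilpotent-implies-regular-at-free}, and $X$ is nilpotent by hypothesis, so the lemma applies verbatim.

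Next I would simply read off the three conclusions. From Lemma~\ref{lem:nilpotent-implies-regular-at-free}: (i) $E_N$ is a free divisor; (ii) there is exactly one singularity of $\overline{X}_N$ in $E_N$; and (iii) if $N>1$ that singularity is $E_N\cap E_{N-1}$. The only thing to match up with the wording of the proposition is the phrase ``$P_N$ is a non-singular point of $E_N$'' — but this is immediate from the definition of the shared path (Definition~\ref{def:shared-path} together with Proposition~\ref{pro:non-branch-separates}), which stipulates that the last point $P_N$ is the first at which the pull-back of $X$ is non-singular. So the proof is essentially a one-line deduction: invoke Proposition~\ref{pro:non-branch-separates} to get the shared path and the regularity of $\overline{X}_N$ at $P_N$, then invoke Lemma~\ref{lem:nilpotent-implies-regular-at-free} for the statements about $E_N$.

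There is no real obstacle here; the proposition is flagged in the text as ``a straightforward application'' of the preceding lemma. The only minor care needed is bookkeeping: making sure the indexing of the blow-up chain coming from the shared path matches the indexing in Lemma~\ref{lem:nilpotent-implies-regular-at-free} (the lemma was stated for a generic chain with $P_N$ regular, and the shared path is one instance of such a chain), and noting that the claim ``$P_N$ is a non-singular point of $E_N$'' should really be parsed as ``$P_N$ lies on $E_N$ and is a non-singular point \emph{of the vector field} $\overline{X}_N$,'' which is built into the definition of the shared path. So the write-up is:

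\begin{proof}
  By Proposition~\ref{pro:non-branch-separates}, since $\Gamma$ is not invariant
  by $X$, there is a well-defined shared path $(P_i)_{i=0}^N$ such that each
  $P_i$ (for $0\le i\le N-1$) is a singular point of the pull-back
  $\overline{X}_i$ of $X$ to $\mathcal{X}_i$, while $P_N$ is a regular point of
  $\overline{X}_N=\overline{X}$. Thus the chain of blow-ups
  $\pi_i:\mathcal{X}_{i+1}\to\mathcal{X}_i$ with centres $P_0,\dots,P_{N-1}$
  satisfies exactly the hypotheses of Lemma
  \ref{lem:nilpotent-implies-regular-at-free}. Since $X$ is nilpotent, that
  lemma gives that $E_N$ is a free divisor, that $\overline{X}_N$ has a single
  singular point in $E_N$, and that, when $N>1$, this singular point is
  $E_N\cap E_{N-1}$. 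Finally, $P_N$ is a non-singular point of $\overline{X}_N$
  on $E_N$ by the very definition of the shared path.
\end{proof}
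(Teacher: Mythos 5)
Your proof is correct and is exactly the paper's argument: the paper gives no separate proof, introducing the proposition only as ``a straightforward application'' of Lemma \ref{lem:nilpotent-implies-regular-at-free}, which is precisely what you carry out (after checking, as you do, that the shared path furnished by Proposition \ref{pro:non-branch-separates} is a chain of blow-ups satisfying that lemma's hypotheses). The only phrase worth a second look is ``$P_N$ is a non-singular point of $E_N$'', which is more naturally read as ``$P_N$ is not a corner of the exceptional divisor'' rather than as a statement about $\overline{X}_N$; that reading also follows immediately, either from Lemma \ref{lem:shared-path-does-not-end-in-corner} or from the fact that the unique singularity of $\overline{X}_N$ on $E_N$ is the corner $E_N\cap E_{N-1}$ while $P_N$ is regular for $\overline{X}_N$.
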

\begin{remark} 
\label{rem:exponent-in-semigroup-away}
Let $\Gamma$ be an analytic branch with irreducible Puiseux parametrization
  \begin{equation*}
    \Gamma\equiv \varphi(t) = (x(t),y(t)) =
    \bigg(t^n,\, \sum_{i\geq n}a_it^i\bigg).
  \end{equation*}
  We may assume $a_n=0$ up to replacing $\Gamma$ with
  $\mathrm{exp}\left( -a_{n} x \frac{\partial}{\partial y} \right)(\Gamma)$.
  Furthermore, since
  $(\Gamma, x^{k})_{(0,0)} = \left( x^{k} \partial/\partial y, \Gamma
  \right)_{(0,0)} =kn$ for $k \geq 1$, we may also assume that the Puiseux
  parametrization of $\Gamma$ is prepared by conjugating it with diffeomorphisms
  embedded in the one-parameter groups of the nilpotent vector fields
  $x^{2} \frac{\partial}{\partial y}, x^{3} \frac{\partial}{\partial y}, \hdots$
  by Corollary \ref{cor:elimination-of-suitable-divisors} and Proposition
  \ref{pro:nilpotent-implies-shared-ends-in-free}.  So in order to transform
  $\Gamma$ to normal form by using diffeomorphisms embedded in the flows of
  nilpotent vector fields, we may assume that the Puiseux parametrization is
  prepared.
\end{remark}
And, applying Corollary \ref{cor:elimination-of-suitable-divisors}, we get the
first \emph{elimination criterion} (in the sense of Zariski \cite{Zariski4} and
Hefez-Hernandes \cite{Hefez-Hernandes-classification}):

\begin{corollary}\label{cor:contact-exponent-for-nilpotent-goes-away}
  Let $\Gamma$ be an analytic branch with prepared irreducible Puiseux
  para\-me\-trization
  \begin{equation*}
    \Gamma\equiv \varphi(t) = (x(t),y(t)) =
    \bigg(t^n,\, \sum_{i\geq m}a_it^i\bigg)
  \end{equation*}
  and let $X$ be a  nilpotent  singular analytic vector field at $\cc$.
  Assume the contact exponent $j=(X,\Gamma)_{(0,0)}$ 
  between $X$ and $\Gamma$ is greater than $m$. 
  Then $\Gamma$ is analytically equivalent via a diffeomorphism
  in the holomorphic flow
  associated to $X$ to a branch $\tilde{\Gamma}$ with parametrization
    \begin{equation*}
    \tilde{\Gamma}\equiv \tilde{\varphi}(t) = (\tilde{x}(t),\tilde{y}(t))=
    \bigg(t^n,\, \sum_{i\geq m}\tilde{a}_it^i\bigg)
  \end{equation*}
  with $\tilde{a}_i = a_i$ for $i<j$ and $\tilde{a}_j=0$.
\end{corollary}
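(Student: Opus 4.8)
The plan is to obtain the statement as a direct consequence of Corollary~\ref{cor:elimination-of-suitable-divisors} together with Proposition~\ref{pro:nilpotent-implies-shared-ends-in-free}; essentially all that has to be done is to check that the hypotheses of those two results hold under the present assumptions.

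First I would record that $\Gamma$ is not invariant by $X$: if it were, every coefficient $\tilde{a}_i(\epsilon)$ of the deformation would be constant (equal to $a_i$), so the contact exponent $(X,\Gamma)_{(0,0)}$ would not be a finite integer, contradicting the hypothesis $j>m$. Let $(P_i)_{i=0}^N$ be the path shared by $X$ and $\Gamma$. By Lemma~\ref{lem:contact-exponent-and-blow-up}, if $N=1$ then $(X,\Gamma)_{(0,0)}=n$; since we are assuming $j>m>n$, this forces $N>1$. In particular $P_1$, which is the point of the exceptional divisor of the first blow-up determined by the tangent cone of $\Gamma$, is a singular point of the pull-back $\overline{X}$ (it lies on the shared path and is not its last point); hence $X$ is \emph{prepared} relatively to $\Gamma$, so the Puiseux description of the deformation given by Proposition~\ref{pro:curve-and-puiseux-deformation-and-field} — valid for every value of the flow parameter, and on which Corollary~\ref{cor:elimination-of-suitable-divisors} rests — is indeed available.

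Next, since $X$ is nilpotent and $\Gamma$ is not invariant by $X$, Proposition~\ref{pro:nilpotent-implies-shared-ends-in-free} applies: the pull-back $\overline{X}_N$ has a single singular point in the last divisor $E_N$ of the shared path (which, because $N>1$, is precisely the corner $E_N\cap E_{N-1}$). This is exactly the extra hypothesis required by Corollary~\ref{cor:elimination-of-suitable-divisors}, which therefore yields a branch $\tilde{\Gamma}$, obtained from $\Gamma$ through a unique diffeomorphism in the one-parameter group generated by $X$, with an irreducible Puiseux parametrization $(t^n,\sum_{i\geq n}\tilde{a}_i t^i)$ satisfying $\tilde{a}_i=a_i$ for $n\leq i<j$ and $\tilde{a}_j=0$.

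It only remains to put the conclusion in the stated form. Because the Puiseux parametrization of $\Gamma$ is prepared and begins at order $m$, we have $a_i=0$ for $n\leq i<m$, hence $\tilde{a}_i=a_i=0$ for those $i$ as well; moreover $\tilde{a}_m=a_m\neq 0$ since $j>m$. Thus the parametrization of $\tilde{\Gamma}$ may be written $(t^n,\sum_{i\geq m}\tilde{a}_i t^i)$ with $\tilde{a}_i=a_i$ for $i<j$ and $\tilde{a}_j=0$, as claimed. The only point that genuinely needs care is the chain of reductions in the two middle paragraphs — in particular verifying that $X$ is prepared relatively to $\Gamma$ and that it has a single singularity on the last divisor of the shared path; once these are settled no further computation is involved, so there is no serious obstacle.
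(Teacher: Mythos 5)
Your proposal is correct and follows essentially the same route as the paper, which presents this corollary as an immediate application of Proposition~\ref{pro:nilpotent-implies-shared-ends-in-free} followed by Corollary~\ref{cor:elimination-of-suitable-divisors}. The extra verifications you supply (non-invariance, $N>1$, preparedness of $X$ relative to $\Gamma$, and the single singularity on $E_N$) are exactly the details the paper leaves implicit, and they all check out.
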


Thus, we can eliminate any finite number of coefficients (at least those up to
the conductor) from the Puiseux expansion of a curve $\Gamma$ as long as their
exponents correspond to the contact with a nilpotent vector field.
In order to give the complete classification, we only need to study
what happens if $X$ has multiplicity $1$ and if we can eliminate an infinite
family of exponents (the tail of the Puiseux expansion) with a single vector
field. Previously, though, note that the terms  whose exponent belongs to
the semigroup of $\Gamma$ can be removed from a Puiseux expansion via a
holomorphic flow:

\begin{corollary}\label{cor:exponent-in-semigroup-away}
  Let $\Gamma$ and $\varphi(t)$ be as in Corollary
  \ref{cor:contact-exponent-for-nilpotent-goes-away}. If $j > m$ is the
  intersection multiplicity of a singular analytic curve $\Delta$ with $\Gamma$
  then there exists a singular vector field $X$, with vanishing linear part, such
  that $(X,\Gamma)_{(0,0)}=j$. In particular the same conclusion as in Corollary
  \ref{cor:elimination-of-suitable-divisors} holds (i.e. the term $a_j$ can be
  eliminated from the parametrization $\varphi$ without affecting the previous
  ones).
\end{corollary}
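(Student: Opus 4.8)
The plan is to produce the required vector field explicitly, by turning a defining function of $\Delta$ into the coefficient of a ``vertical'' derivation. Write $\Delta\equiv(g=0)$ with $g\in\mathbb{C}\{x,y\}$. Since $\Delta$ is singular at the origin, $g$ has order at least $2$, that is $g\in\mathfrak{m}_0^2$; and since $\Gamma\not\subset\Delta$, the hypothesis reads $\ord_t g(\varphi(t)) = (\Gamma,\Delta)_{(0,0)} = j$, with $g(\varphi(t))\not\equiv 0$. Set
\begin{equation*}
  X := g(x,y)\,\frac{\partial}{\partial y}.
\end{equation*}
Then $X$ is a singular vector field whose linear part is zero, precisely because $g\in\mathfrak{m}_0^2$; in particular $X$ is nilpotent. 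Note also that $X$ is prepared relatively to $\Gamma$, since the coefficient of $\partial/\partial x$ vanishes identically.

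Next I would check that $(X,\Gamma)_{(0,0)}=j$, using the correspondence between vector fields and forms. The differential form dual to $X$ is $\omega = -g(x,y)\,dx$. Evaluating along $\varphi(t)=(t^n,\sum_{i\geq m}a_it^i)$ and using $\dot{x}(t)=nt^{n-1}$,
\begin{equation*}
  \upsilon_{\Gamma}(\omega) = \ord_t\!\big(-g(\varphi(t))\,\dot{x}(t)\big)+1 = \ord_t\!\big(g(\varphi(t))\big) + \ord_t\!\big(\dot{x}(t)\big) + 1 = j + (n-1) + 1 = j + n.
\end{equation*}
In particular $\upsilon_{\Gamma}(\omega)<\infty$, so $\omega$ does not vanish identically along $\Gamma$; equivalently $\Gamma$ is not invariant by $X$. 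Hence Theorem~\ref{the:contact-exponent-is-contact-minus-n} applies and gives
\begin{equation*}
  (X,\Gamma)_{(0,0)} = \upsilon_{\Gamma}(\omega) - n = j,
\end{equation*}
which is the first assertion.

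Finally, for the stated consequence I would simply invoke Corollary~\ref{cor:contact-exponent-for-nilpotent-goes-away}: its hypotheses are met, because $X$ is nilpotent, $\varphi$ is prepared, and $j = (X,\Gamma)_{(0,0)} > m$. It follows that $\Gamma$ is analytically equivalent, through a diffeomorphism belonging to the holomorphic flow of $X$, to a branch $\tilde{\Gamma}\equiv\tilde{\varphi}(t)=(t^n,\sum_{i\geq m}\tilde{a}_it^i)$ with $\tilde{a}_i = a_i$ for $i<j$ and $\tilde{a}_j = 0$. (Alternatively one applies Corollary~\ref{cor:elimination-of-suitable-divisors} directly, its standing assumption that $\overline{X}$ has a single singularity on the last divisor $E_N$ of the shared path being automatic here by Proposition~\ref{pro:nilpotent-implies-shared-ends-in-free}, since $X$ is nilpotent.) There is no genuinely delicate step; the one point that must not be missed is that the weight $g$ has order at least $2$ at the origin — this is exactly where the hypothesis that $\Delta$ is \emph{singular} is used — since otherwise $X$ would be merely singular rather than of vanishing linear part, and the nilpotency required to apply Corollary~\ref{cor:contact-exponent-for-nilpotent-goes-away} (through Proposition~\ref{pro:nilpotent-implies-shared-ends-in-free}) would be lost.
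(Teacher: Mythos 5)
Your proposal is correct and follows essentially the same route as the paper: take a defining equation $g$ of $\Delta$, form $X=g\,\partial/\partial y$ (whose linear part vanishes because $g\in\mathfrak{m}_0^2$ — the paper deduces this from $j>m$ rather than from the stated singularity of $\Delta$, but the two are equivalent here), and identify $(X,\Gamma)_{(0,0)}=j$ via Theorem~\ref{the:contact-exponent-is-contact-minus-n} before invoking Corollary~\ref{cor:contact-exponent-for-nilpotent-goes-away}. Your explicit verification of non-invariance and of the preparedness of $X$ is a welcome bit of added care, not a deviation.
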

\begin{proof}
  Consider $f \in {\mathbb C}\{x,y\}$ such that
  $j= (\Gamma,f)_{(0,0)}$. We obtain that the multiplicity at the origin is greater than $1$
  since otherwise $(\Gamma, f)_{(0,0)} \leq m$.   
  The vector field $X=f(x,y)\pt{y}$ has vanishing linear part at $(0,0)$. 
  By Theorem
  \ref{the:contact-exponent-is-contact-minus-n}, this $X$ has contact exponent
  $(X,\Gamma)_{(0,0)}=j$.  Applying Corollary
  \ref{cor:contact-exponent-for-nilpotent-goes-away}, we are done.
\end{proof}

From all the previous discussions, we may assume that, after a finite
composition of local diffeomorphisms embedded in flows (including a linear one,
intended to make the tangent cone of $\Gamma$ at $(0,0)$ different from the $OX$
axis), $\Gamma$ has a prepared Puiseux expansion of the form
\begin{equation}
  \label{eq:puiseux-without-semigroup-1}
  \Gamma\equiv \varphi(t) = (x(t),y(t)) =
  \bigg(t^n,\,   \sum_{i  \geq  m}a_it^i\bigg)
\end{equation}
where $n<m$, $n \nmid m$, $a_m \neq 0$ and if $i$ is in the semigroup associated
to $\Gamma$ and $m < i\leq c$ where $c$ is the conductor of $\Gamma$, then
$a_i=0$. We may also assume that $a_i=0$ if $i\leq c$ is the contact exponent
with a nilpotent vector field.  We shall deal with the co-final terms later on
(it is well known that they may be eliminated with a single analytic
diffeomorphism anyway).
\begin{proposition}
\label{pro:lambda}
In the conditions of the last paragraph, let $\lambda$ be the least exponent
$\lambda>m$ such that $a_{\lambda}\neq 0$ and $c$ the conductor of
$\Gamma$. Assume $\lambda < c$. Let $X$ be a non-nilpotent singular vector
field.  Then $(X, \Gamma)_{(0,0)} \leq \lambda$.  Moreover
$(X, \Gamma)_{(0,0)} < \lambda$ implies that $(X, \Gamma)_{(0,0)}$ is of the
form $(pn +qm) - n$ where $p \geq 0$, $q \geq 0$ and
$m-n \neq (pn +qm) - n \geq n$.
\end{proposition}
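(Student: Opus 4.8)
The plan is to compute the order in $t$ of the pull‑back $\omega(\varphi'(t))$ of the dual form $\omega = -B(x,y)\,dx + A(x,y)\,dy$ along the parametrization $\varphi(t)=(t^n,y(t))$, $y(t)=\sum_{i\geq m}a_it^i$, and then read off $(X,\Gamma)_{(0,0)}$ from Theorem~\ref{the:contact-exponent-is-contact-minus-n}, once we have checked that $\Gamma$ is not invariant by $X$. Write $y(t)=a_mt^m+z(t)$ with $z(t)=a_\lambda t^\lambda+O(t^{\lambda+1})$ (here the normalisation $a_i=0$ for $m<i<\lambda$ is used), and set $\hat A(t)=A(t^n,a_mt^m)$, $\hat B(t)=B(t^n,a_mt^m)$; since $A,B\in\mathfrak m$, every exponent occurring in $\hat A$ and $\hat B$ lies in $\langle n,m\rangle\setminus\{0\}$. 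Expanding $A(\varphi(t))=\hat A(t)+\ptt{A}{y}(0,0)\,z(t)+O(t^{n+\lambda})$ and likewise for $B$, and using $\dot x=nt^{n-1}$, $\dot y=ma_mt^{m-1}+\dot z$, one gets
\[
  \omega(\varphi'(t)) \;=\; A(\varphi)\dot y-B(\varphi)\dot x \;=\; \Psi(t)+R(t),
\]
where $\Psi := ma_mt^{m-1}\hat A-nt^{n-1}\hat B$ (the pull‑back of $\omega$ along $t\mapsto(t^n,a_mt^m)$) and $R$ has order $\geq\lambda+n-1$, the coefficient of $t^{\lambda+n-1}$ in $R$ being $a_\lambda\bigl(\lambda\,\ptt{A}{x}(0,0)-n\,\ptt{B}{y}(0,0)\bigr)$: indeed $R=\hat A\dot z-nt^{n-1}\ptt{B}{y}(0,0)z+(\text{order}>\lambda+n-1)$, and the $\ptt{A}{y}(0,0)$‑term drops out because it carries the factor $t^{m-1}$ rather than $t^{n-1}$. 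Observe moreover that every exponent occurring in $\Psi$ has the shape $pn+qm-1$ with $p,q\geq0$ and $p+q\geq2$.

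If $\ord_t\Psi<\lambda+n-1$, then since $R$ has strictly larger order we have $\omega(\varphi'(t))\not\equiv0$ and $\ord_t\omega(\varphi'(t))=\ord_t\Psi=p_0n+q_0m-1$ with $p_0+q_0\geq2$; hence $\Gamma$ is not invariant and Theorem~\ref{the:contact-exponent-is-contact-minus-n} gives $(X,\Gamma)_{(0,0)}=(p_0n+q_0m)-n<\lambda$. From $p_0+q_0\geq2$ we get $p_0n+q_0m\geq2n$, so $(p_0n+q_0m)-n\geq n$; and $p_0n+q_0m=m$ is impossible since it forces $n\mid m$, so $(p_0n+q_0m)-n\neq m-n$. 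This is exactly the ``moreover'' assertion.

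If instead $\ord_t\Psi\geq\lambda+n-1$, we argue as follows. The coefficient of $t^{2n-1}$ in $\Psi$ is $-n\,\ptt{B}{x}(0,0)$ (its only source is the $t^n$‑term of $\hat B$), and $2n-1<\lambda+n-1$, so $\ptt{B}{x}(0,0)=0$. Then the coefficient of $t^{m+n-1}$ in $\Psi$ is $a_m\bigl(m\,\ptt{A}{x}(0,0)-n\,\ptt{B}{y}(0,0)\bigr)$, and $m+n-1<\lambda+n-1$, so $n\,\ptt{B}{y}(0,0)=m\,\ptt{A}{x}(0,0)$; were $\ptt{A}{x}(0,0)=0$ we would also have $\ptt{B}{y}(0,0)=0$, making the linear part of $X$ equal to $\ptt{A}{y}(0,0)\,y\,\pt{x}$, which is nilpotent — against hypothesis. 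Hence $\ptt{A}{x}(0,0)\neq0$. Finally, a term $t^{\lambda+n-1}$ can occur in $\Psi$ only if $\lambda\in\langle n,m\rangle$ or $\lambda+n-m\in\langle n,m\rangle$; the former is impossible since $\lambda\notin S_\Gamma\supseteq\langle n,m\rangle$, and the latter forces $\lambda=q''m-n$ for some $q''\geq2$ — but then $\lambda$ would be the contact exponent of the nilpotent vector field $y^{q''-1}\pt{x}$ with $\Gamma$ (its dual form $y^{q''-1}\,dy$ has contact $q''m$ with $\Gamma$, so its contact exponent is $q''m-n=\lambda\leq c$ by Theorem~\ref{the:contact-exponent-is-contact-minus-n}), contradicting the normal form, in which $a_\lambda\neq0$. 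Therefore the $t^{\lambda+n-1}$‑coefficient of $\Psi$ vanishes, so that of $\omega(\varphi'(t))=\Psi+R$ equals $a_\lambda\bigl(\lambda-m\bigr)\ptt{A}{x}(0,0)\neq0$, while every coefficient of lower order vanishes; hence $\ord_t\omega(\varphi'(t))=\lambda+n-1$, $\Gamma$ is not invariant, and $(X,\Gamma)_{(0,0)}=\lambda$. In both cases $(X,\Gamma)_{(0,0)}\leq\lambda$.

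The steps I expect to require the most care are the order bookkeeping in the decomposition $\omega(\varphi'(t))=\Psi+R$ — in particular isolating the coefficient of $t^{\lambda+n-1}$ in $R$ — and, in the second case, the exclusion $\lambda\neq q''m-n$: this is precisely the point at which the hypothesis built into the normal form, namely that $\lambda$ is not the contact exponent of any nilpotent vector field, is actually used. (Note that the computation also shows a posteriori that $\Gamma$ is never invariant by a non‑nilpotent $X$, so the appeals to Theorem~\ref{the:contact-exponent-is-contact-minus-n} are legitimate.)
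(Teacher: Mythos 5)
Your proof is correct and takes essentially the same route as the paper's: both compute $\varphi^{*}\omega$ for the dual form, invoke Theorem~\ref{the:contact-exponent-is-contact-minus-n}, observe that all exponents other than $\lambda+n-1$ lie in the semigroup generated by $n$ and $m$ (shifted by $-1$), force $B_x(0,0)=0$ and $nB_y(0,0)=mA_x(0,0)$, use non-nilpotency to get $A_x(0,0)\neq0$, and exclude $\lambda+n$ from that semigroup via exactly the same two subcases ($\lambda\in S_\Gamma$, or $\lambda$ a nilpotent contact exponent realised by $y^{q-1}\partial/\partial x$). Your $\Psi+R$ decomposition is just a more explicit bookkeeping of the paper's coefficient $(a_{10}\lambda-b_{01}n)a_\lambda$ of $t^{n+\lambda}$ in $t\,\varphi^{*}\omega$.
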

\begin{proof}
  Let $X = A(x,y) \frac{\partial}{\partial x} + B(x,y) \frac{\partial}{\partial y}$.
  Write $A(x,y)=a_{10}x + a_{01}y + \overline{A}(x,y)$ and
  $B(x,y)=b_{10}x + b_{01}y + \overline{B}(x,y)$.
  Consider $\varphi^{\ast}\omega$, where $\omega$ is the
  dual form $\omega=-B(x,y)dx+A(x,y)dy$:
  \begin{equation*}
    \begin{split}
      \varphi^{\ast}&\omega =-(b_{10}t^n + b_{01}(a_m t^m +
      a_{\lambda}t^{\lambda} + \hot)+\overline{B}(\varphi(t)))nt^{n-1}dt
      +\\
      &(a_{10}t^n + a_{01}(a_m t^m + a_{\lambda}t^{\lambda} +
      \hot)+\overline{A}(\varphi(t)))(m a_m t^{m-1} + \lambda
      a_{\lambda}t^{\lambda-1} + \hot)dt.
    \end{split}
  \end{equation*}
  Let $j= (X, \Gamma)_{(0,0)}$, so that $j+n=\nu_{\Gamma}(\omega)$ by Theorem
  \ref{the:contact-exponent-is-contact-minus-n}.  We may assume
  $j \not \in \{n,m\}$ since $n = (2n + 0 \cdot m)- n$ and $m = (n + m) -n$.

  We have $b_{10} = 0$ since otherwise $\nu_{\Gamma}(\omega)= 2n$ and we would
  have $j=n$.  The property $\nu_{\Gamma}(\omega)< n+m$ implies that
  $\nu_{\Gamma}(\omega)$ is a multiple of $n$. In particular we get
  $\nu_{\Gamma}(\omega) \neq m$ and $j \neq m-n$. Moreover $j$ is a multiple of
  $n$ greater or equal than $n$. So we may assume
  $\nu_{\Gamma}(\omega) \geq n+m$ from now on.  Indeed we obtain
  $\nu_{\Gamma}(\omega) > n+m$ and $j >m$ since $j \neq m$.  This implies
  $b_{01} n = a_{10} m$. Since $X$ is non-nilpotent, it follows that
  $a_{10} \neq 0$ and $b_{01} \neq 0$.  The pull-back $\varphi^{\ast}\omega$
  satisfies
 \begin{equation*}
 t  \varphi^{\ast}\omega = (g (t) + (a_{10} \lambda - b_{01} n) a_{\lambda} t^{n+\lambda} +
 O(t^{n+\lambda +1})) dt
 \end{equation*}
 where $a_{10} \lambda - b_{01} n \neq 0$ and the exponents of all monomials
 with non-vanishing coefficients of the Taylor power series expansion of $g(t)$
 belong to the semigroup
 \begin{equation*}
 S':= \{ pn+qm: p \geq 0, \ q \geq 0, \ p+q \geq 1 \}.
\end{equation*}
We claim that $n+\lambda$ does not belong to $S'$. Otherwise
$n+\lambda = pn +qm$. If $p \geq 1$ then $\lambda$ belongs to $S$, a
contradiction. If $p=0$ then $q\geq 2$ and $\lambda$ is the contact order
$(y^{q-1} \partial/\partial x, \Gamma)_{(0,0)}$ of $\Gamma$ with a nilpotent
vector field, again a contradiction.

Since $\lambda +n \not \in S'$ and $a_{10} \lambda - b_{01} n \neq 0$, it
follows that $m < j \leq \lambda$.  Moreover $j < \lambda$ implies
$j \in S' - n$.
\end{proof}
\begin{theorem}\label{the:elimination-of-terms-greater-than-lambda}
  In the same conditions as above, if $j>\lambda$ is the contact exponent of
  $\Gamma$ with an analytic vector field $X$, then the term of order $j$ can be
  eliminated from a prepared Puiseux expansion via a diffeomorphism in a 
  nilpotent holomorphic flow.
\end{theorem}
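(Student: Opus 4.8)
The plan is to reduce the statement to the already-settled nilpotent case, so that it becomes a two-step deduction from Proposition~\ref{pro:lambda} and Corollary~\ref{cor:contact-exponent-for-nilpotent-goes-away}.

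First I would observe that the vector field $X$ realizing the contact exponent $j=(X,\Gamma)_{(0,0)}>\lambda$ is necessarily \emph{nilpotent}. Indeed, a singular vector field is either nilpotent or non-nilpotent according to whether its linear part is nilpotent, and Proposition~\ref{pro:lambda} — which applies precisely in the situation at hand, in particular under the running hypothesis $\lambda<c$ — asserts that any non-nilpotent singular vector field $X$ satisfies $(X,\Gamma)_{(0,0)}\le\lambda$. Reading this contrapositively, $j>\lambda$ forces $X$ to have nilpotent linear part.

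Next, with $X$ now known to be nilpotent and $j>\lambda>m$, the hypotheses of Corollary~\ref{cor:contact-exponent-for-nilpotent-goes-away} are met verbatim: $\Gamma$ carries a prepared irreducible Puiseux parametrization $(t^n,\sum_{i\ge m}a_it^i)$, $X$ is a nilpotent singular analytic vector field, and the contact exponent $j=(X,\Gamma)_{(0,0)}$ exceeds $m$. Applying that corollary produces a branch $\tilde\Gamma\equiv(t^n,\sum_{i\ge m}\tilde a_it^i)$, analytically equivalent to $\Gamma$ through a diffeomorphism embedded in the (nilpotent) holomorphic flow of $X$, with $\tilde a_i=a_i$ for $i<j$ and $\tilde a_j=0$. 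Since $\tilde a_i=a_i$ for $i\le m$ in particular, the parametrization of $\tilde\Gamma$ is again prepared (same leading exponent $m$, same $n\nmid m$, $\tilde a_m=a_m\neq 0$), so the term of order $j$ has indeed been removed, as asserted.

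I do not expect a genuine obstacle here: all the weight rests on Proposition~\ref{pro:lambda}, whose proof already carries out the delicate bookkeeping with the dual $1$-form $\omega$ (the relations $b_{10}=0$, $b_{01}n=a_{10}m$, and the arithmetic check $\lambda+n\notin S'$). Granting that proposition, the present statement is immediate. The only points worth a sentence of care are that the contact exponent is a well-defined, coordinate-independent invariant, so that the hypotheses of both Proposition~\ref{pro:lambda} and Corollary~\ref{cor:contact-exponent-for-nilpotent-goes-away} genuinely bear on the given pair $(X,\Gamma)$ (cf.\ the remark after Definition~\ref{def:contact-exponent}), and that ``non-nilpotent'' is exactly the negation of ``nilpotent'' for a singular vector field. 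If one preferred not to invoke Proposition~\ref{pro:lambda} directly, an alternative route would be to exhibit \emph{ab initio} a nilpotent vector field $Y$ with $(Y,\Gamma)_{(0,0)}=j$ — for instance via Corollary~\ref{cor:exponent-in-semigroup-away} when $j$ lies in the semigroup of $\Gamma$, or by a shared-path analysis using Theorem~\ref{the:contact-exponent-is-contact-minus-n} in general — and then apply Corollary~\ref{cor:contact-exponent-for-nilpotent-goes-away} to $Y$; but this is strictly more work and unnecessary.
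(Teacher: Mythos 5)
Your proof is correct and is essentially identical to the paper's: the authors likewise deduce from Proposition~\ref{pro:lambda} that $j>\lambda$ forces $X$ to be nilpotent, and then invoke Corollary~\ref{cor:contact-exponent-for-nilpotent-goes-away}. The extra remarks on coordinate-independence and on the alternative route via Corollary~\ref{cor:exponent-in-semigroup-away} are fine but not needed.
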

\begin{proof} 
Let $X$ be a singular vector field such that $(X,\Gamma)_{(0,0)} = j$.
Since $j > \lambda$, $X$ is nilpotent  by
Proposition \ref{pro:lambda}.  Apply Corollary
  \ref{cor:contact-exponent-for-nilpotent-goes-away} to finish the proof. 
\end{proof}
 We end this section with a characterization of Zariski's $\lambda$
invariant of a plane branch $\Gamma$ in terms of tangency orders (or contact
orders) of vector fields with $\Gamma$.  
\begin{theorem}[Zariski's $\lambda$ invariant]
\label{teo:Zar-inv}
In the conditions of Proposition \ref{pro:lambda}, let $\lambda$ be the least
exponent $\lambda>m$ such that $a_{\lambda}\neq 0$ and $c$ the conductor of
$\Gamma$. Then $\lambda+n=\upsilon_{\Gamma}(mydx-nxdy)$ holds.  Indeed
if $\lambda < c$ then $m$ and $\lambda$ are the unique positive integers $j$
such that $j$ is the contact exponent of a singular vector field with $\Gamma$
but is not the contact exponent of a nilpotent vector field with $\Gamma$.
 As a consequence, $\lambda$ is an analytic invariant of $\Gamma$ if
$\lambda<c$.
\end{theorem}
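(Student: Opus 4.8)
The plan is to derive Theorem~\ref{teo:Zar-inv} as an essentially formal consequence of the results already established in this section, primarily Proposition~\ref{pro:lambda}, Theorem~\ref{the:contact-exponent-is-contact-minus-n}, and the elimination corollaries. The three assertions to prove are: (i) the identity $\lambda+n = \upsilon_\Gamma(mydx - nxdy)$; (ii) when $\lambda < c$, the integers $m$ and $\lambda$ are exactly the positive integers that occur as the contact exponent of \emph{some} singular vector field with $\Gamma$ but \emph{not} as the contact exponent of any nilpotent one; and (iii) the consequence that $\lambda$ is an analytic invariant when $\lambda < c$.

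For (i), the natural candidate vector field is the radial-type field $X_0 = nx\,\partial/\partial x + my\,\partial/\partial y$, whose dual form (in the sign convention of Theorem~\ref{the:contact-exponent-is-contact-minus-n}, $\omega = -B\,dx + A\,dy$) is $\omega_0 = -my\,dx + nx\,dy$; up to sign this is $mydx - nxdy$, and $\upsilon_\Gamma$ is insensitive to the sign of the form. So it suffices to compute $(X_0,\Gamma)_{(0,0)}$ and invoke $\upsilon_\Gamma(\omega_0) = (X_0,\Gamma)_{(0,0)} + n$. First I would pull back $\omega_0$ by the prepared parametrization $\varphi(t) = (t^n, \sum_{i\geq m} a_i t^i)$: one gets $\varphi^\ast\omega_0 = \big(-m(a_m t^m + a_\lambda t^\lambda + \hot)\cdot nt^{n-1} + nt^n\cdot(ma_m t^{m-1} + \lambda a_\lambda t^{\lambda-1} + \hot)\big)dt$. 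The $t^{n+m-1}$ terms cancel (coefficient $-mn a_m + nm a_m = 0$), which is exactly why $X_0$ is the ``right'' field, and the next surviving term has order $n+\lambda-1$ with coefficient $n(\lambda-m)a_\lambda \neq 0$ (as $\lambda > m$ and $a_\lambda\neq 0$). Hence $\upsilon_\Gamma(\omega_0) = (n+\lambda-1)+1 = n+\lambda$, giving (i). Note this also shows $(X_0,\Gamma)_{(0,0)} = \lambda$ and $X_0$ is non-nilpotent, consistent with Proposition~\ref{pro:lambda}.

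For (ii), one direction is the content of the section's machinery: $m$ is the contact exponent of the nilpotent-looking but actually non-nilpotent... — more carefully, by Proposition~\ref{pro:lambda} every non-nilpotent $X$ has $(X,\Gamma)_{(0,0)} \leq \lambda$, with strict inequality forcing the value into $S'-n$ (a value which, one checks, is simultaneously realizable by a nilpotent field via Corollaries~\ref{cor:contact-exponent-for-nilpotent-goes-away} and~\ref{cor:exponent-in-semigroup-away}); and $\lambda$ itself is realized by the non-nilpotent $X_0$ above but, again by Proposition~\ref{pro:lambda} applied in reverse (no nilpotent field can have contact $>\lambda$ with the term of order $\lambda$ still present — or rather, $\lambda\notin S'$ and the argument in the proof of Proposition~\ref{pro:lambda} shows $\lambda$ is not a nilpotent contact exponent), it is not a nilpotent contact exponent. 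The value $m$ is handled separately: $m$ is attained by the non-nilpotent $X_0$ restricted appropriately, or directly since $m = (n+m)-n$ corresponds to a non-nilpotent field, while $m\notin S$ rules it out as a nilpotent contact exponent by the semigroup argument. Conversely, any $j>\lambda$ that is a contact exponent is a \emph{nilpotent} one by Theorem~\ref{the:elimination-of-terms-greater-than-lambda}'s proof, and any $j<\lambda$ with $j\notin\{m\}$ that is realized lies in $S'-n$ and is thus also a nilpotent contact exponent; so the only exceptions are $m$ and $\lambda$.

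For (iii): the set of contact exponents of nilpotent vector fields with $\Gamma$ and the set of all contact exponents are both determined by the analytic type of $\Gamma$ — indeed the contact exponent is coordinate-independent (by the Remark following Definition~\ref{def:contact-exponent}) and the classes of nilpotent versus general vector fields are preserved under $\phi_\ast$ — so their set-difference $\{m,\lambda\}$ is an analytic invariant, and since $m$ is the multiplicity datum already fixed, $\lambda$ itself is an analytic invariant whenever $\lambda<c$. \textbf{The main obstacle} I anticipate is bookkeeping in part~(ii): one must pin down precisely which elements of $S'-n$ are genuinely realized as contact exponents (not merely allowed by the necessary condition in Proposition~\ref{pro:lambda}) and confirm each such realized value is \emph{also} a nilpotent contact exponent, so that the symmetric difference collapses exactly to $\{m,\lambda\}$; this requires carefully pairing the realizability statements of Corollaries~\ref{cor:elimination-of-suitable-divisors},~\ref{cor:contact-exponent-for-nilpotent-goes-away}, and~\ref{cor:exponent-in-semigroup-away} against Proposition~\ref{pro:lambda}, rather than any new computation.
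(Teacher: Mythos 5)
Your part (i) is exactly the paper's computation (pull back $mydx-nxdy$ by the prepared parametrization; the $t^{n+m-1}$ terms cancel and the first surviving coefficient is $n(\lambda-m)a_\lambda\neq 0$), and your part (iii) matches the intended deduction from the invariance of the two sets of contact exponents. The genuine gap is in part (ii), which is where the paper's proof does its actual work. First, your reasons for excluding $m$ and $\lambda$ as nilpotent contact exponents are both incorrect. You write that ``$m\notin S$ rules it out'', but $m\in S_\Gamma$ (it is the intersection number of $\Gamma$ with $y=0$); the correct argument is that if a nilpotent field had contact exponent $m$, then Corollary \ref{cor:contact-exponent-for-nilpotent-goes-away} would erase $a_m$ while fixing all lower coefficients, producing an analytically (hence topologically) equivalent branch with a different first characteristic exponent, which is impossible. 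For $\lambda$, Proposition \ref{pro:lambda} and its proof concern only non-nilpotent fields (the argument there derives $a_{10}\neq 0$ and $b_{01}\neq 0$ from non-nilpotency), so ``$\lambda+n\notin S'$'' does not exclude a nilpotent field with contact exponent $\lambda$. What excludes it is the standing normalization imposed just before Proposition \ref{pro:lambda}: $a_i=0$ whenever $i\le c$ is a nilpotent contact exponent, while $a_\lambda\neq 0$ and $\lambda<c$. Without that normalization the claim is simply false: for $(t^3,t^4+t^5)$ one has $\lambda=5=2m-n$, which is the contact exponent of the nilpotent field $y\,\partial/\partial x$.

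Second, the step you defer as ``bookkeeping'' --- that every realized contact exponent $j\notin\{m,\lambda\}$ is also a nilpotent contact exponent --- is not delivered by the corollaries you cite. Corollary \ref{cor:exponent-in-semigroup-away} only realizes semigroup elements greater than $m$, whereas Proposition \ref{pro:lambda} forces $j+n=pn+qm$ with $(p,q)\notin\{(1,0),(0,1),(1,1)\}$, and for $p=0$ the value $j=qm-n$ need not belong to $S_\Gamma$ (again $n=3$, $m=4$, $q=2$, $j=5$). The paper closes this case by a short explicit construction: the monomial fields $x^{p-1}y^{q}\,\partial/\partial y$ (for $p\ge 1$) and $x^{p}y^{q-1}\,\partial/\partial x$ (for $q\ge 1$) are nilpotent under the stated restrictions on $(p,q)$ and have contact exponent exactly $j$ by Theorem \ref{the:contact-exponent-is-contact-minus-n}. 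This small computation, rather than any pairing of the elimination corollaries, is the missing ingredient.
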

\begin{proof}
  Let $\lambda<\infty$ be as in the statement and $\omega=mydx-nxdy$. By direct
  substitution:
  \begin{equation*}
    \upsilon_{\Gamma}(\omega) = \ord_{t}\bigg(
      \big(  \sum_{i \geq m}mna_it^{i+n-1}  \big) -
      \big( \sum_{i \geq m}nia_it^{i+n-1}  \big)
    \bigg)+1
  \end{equation*}
  so that
  \begin{equation*}
    \upsilon_{\Gamma}(\omega) =  
    {\left(  nx \frac{\partial}{\partial x} +  my \frac{\partial}{\partial y}, \Gamma \right)}_{(0,0)}=
  \ord_t((mn-n\lambda)a_m t^{\lambda+n-1}+\hot) +1 =\lambda+n,
  \end{equation*}
  which gives the first part of the statement.  Moreover $m$ is also a contact exponent since
  $m= (x \partial /\partial x, \Gamma)_{(0,0)}$.
  It can not be expressed as a contact with a nilpotent vector field, since the coefficient
  of $t^m$ in $y(t)$ can not be erased (Corollary \ref{cor:contact-exponent-for-nilpotent-goes-away}).

  Assume that there exists $j \not \in \{m,\lambda\}$ satisfying the hypotheses.
  We obtain $j < \lambda$ and $j+n = (pn+qm) \geq 2n$ for some $p \geq 0$ and $q \geq 0$
with $(p,q) \not \in \{ (1,0), (0,1), (1,1)\}$  by Proposition \ref{pro:lambda}. The vector field
$x^{p-1} y^{q} \partial / \partial y$ is nilpotent if $p \geq 1$ since $(p,q) \neq (1,0)$
and $(p,q) \neq (1,1)$.
It satisfies $(x^{p-1} y^{q} \partial / \partial y, \Gamma)_{(0,0)} =j$ and so we get a contradiction.
Analogously if $q \geq 1$ the vector field $x^{p} y^{q-1} \partial / \partial x$
is nilpotent and $(x^{p} y^{q-1} \partial / \partial x, \Gamma)_{(0,0)} =j$ holds, providing a
contradiction. 
\end{proof}

\section{Analytic classes and their completeness}
 Now we focus on whether curves in the same class of analytic conjugacy of
 a given plane branch are conjugated by local diffeomorphisms in
 a one-parameter group.
 Let us give some definitions.
 \begin{definition}
 We say that $\rho (t)$ (where $\rho(t)$ belongs to the maximal ${\mathfrak m}_1$
 ideal of ${\mathbb C}[[t]]$) is a formal diffeomorphism
 if its linear part is non-vanishing (or in other words if
 $\rho (t) \in {\mathfrak m}_1 \setminus {\mathfrak m}_1^{2}$). If, in addition
 to the previous properties, $\rho(t)$ belongs to ${\mathbb C}\{t\}$ then is a
 local biholomorphism defined in the neighborhood of the origin by the inverse
 function theorem.

 We say that
 $\psi(x,y) = (a(x,y), b(x,y)) \in {\mathfrak m} \times {\mathfrak m}$ is a
 formal diffeomorphism and we denote
 $\psi \in \widehat{\mathrm{Diff}} ({\mathbb C}^{2},0)$ if the linear part of
 $\psi$ at the origin is a linear isomorphism.
 \end{definition}
 \begin{definition}
   The Krull topology for formal diffeomorphisms or vector fields is defined by
   considering them as $n$-uples of formal power series and the induced product
   topology in ${\mathbb C}[[x,y]]^{n}$ (cf. Definition \ref{def:krull}).
 \end{definition}
 \begin{definition}
   Let $\psi$ be a formal diffeomorphism. We say that $\psi$ is {\it unipotent}
   if its linear part is a unipotent linear map.
 \end{definition}
 \begin{definition}
   The time $1$ flow $\mathrm{exp} (X)$ of a singular vector field $X$ is
 \begin{equation*}
 \mathrm{exp} (X)(x,y) = \left( \sum_{j=0}^{\infty} \frac{X^{j} (x)}{j!},
 \sum_{j=0}^{\infty} \frac{X^{j} (y)}{j!} \right).
 \end{equation*}
 Given a formal vector field $X$ we use the previous formula to define the
 formal diffeomorphism $\mathrm{exp}(X)$. The formula is well-defined: indeed,
 if $(X_k)_{k \geq 1}$ is a sequence of vector fields that converges to $X$ in
 the ${\mathfrak m}$-adic topology then $\mathrm{exp}(X_k)$ converges to
 $\mathrm{exp}(X)$ in the Krull topology when $k \to \infty$.
 \end{definition}
 First let us provide an example of a complete class.
 \begin{proposition}
 \label{pro:smooth-is-complete}
 The class of analytic conjugacy of all smooth plane branches is complete.
 \end{proposition}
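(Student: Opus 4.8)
The plan is to show that any two smooth plane branches can be connected by a geodesic, i.e.\ by the time-$1$ flow of a single germ of singular holomorphic vector field. Since all smooth branches form a single analytic (indeed, topological) conjugacy class, it suffices to fix one model curve, say $\Gamma_0 \equiv (y=0)$, and prove that an \emph{arbitrary} smooth branch $\Gamma$ is conjugate to $\Gamma_0$ by such a time-$1$ flow; composing the flow connecting $\Gamma_1$ to $\Gamma_0$ with the inverse of the one connecting $\Gamma_2$ to $\Gamma_0$ then gives what we want, once we note that this composition is again a time-$1$ flow (a product of two flows through the identity is generated by the Baker--Campbell--Hausdorff series, which converges in the Krull topology here; alternatively we work directly with $\Gamma$ and $\Gamma_0$ and observe completeness is symmetric and transitive in the obvious sense).

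First I would reduce to a normal form. Up to a linear change of coordinates (which is the time-$1$ flow of a linear, hence singular, vector field), we may assume $\Gamma$ is not tangent to the $OY$ axis, so $\Gamma$ has an irreducible Puiseux parametrization $(t, y(t))$ with $y(t)=\sum_{i\ge 1} a_i t^i \in {\mathbb C}\{t\}$, i.e.\ $\Gamma \equiv (y - g(x) = 0)$ for some $g \in {\mathfrak m}_1 \subset {\mathbb C}\{x\}$ (here I use that $\Gamma$ is smooth so $n=1$). The claim then becomes: there is a singular vector field $X$ with $\mathrm{exp}(X)(\Gamma_0) = \Gamma$, equivalently $\mathrm{exp}(-X)(\Gamma) = \Gamma_0$.

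The key step is to produce $X$ explicitly. Take $X = g(x)\,\frac{\partial}{\partial y}$. This is a singular vector field (in fact nilpotent, with vanishing linear part when $g \in {\mathfrak m}_1^2$, or with nilpotent linear part when $g$ has a nonzero linear term — either way its linear part is nilpotent, and it is singular since $g(x)\in{\mathfrak m}_1$ implies $X({\mathfrak m}_0)\subseteq{\mathfrak m}_0$). Because $X(x)=0$ and $X(y)=g(x)$ with $X(g(x))=0$, the flow is computed in closed form: $\psi_s(x,y) = (x,\, y + s\, g(x))$, so $\mathrm{exp}(X)(x,y) = (x, y+g(x))$. Hence $\mathrm{exp}(X)(\Gamma_0) = \mathrm{exp}(X)(\{y=0\})$; tracking the image, $\{(x,0)\}$ maps to $\{(x,g(x))\}$, which is exactly $\Gamma$. (Equivalently, in the deformation notation of Definition~\ref{def:deformation}, with $f=y$ one has $X(f)=g(x)$ and $X^{n}(f)=0$ for $n\ge 2$, so $\Gamma_\epsilon \equiv (y+\epsilon g(x)=0)$ and $\Gamma_1=\Gamma$.) This realizes $\Gamma$ as a geodesic image of $\Gamma_0$.

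Finally I would assemble the pieces. Given smooth branches $\Gamma_1,\Gamma_2$, apply the above (after the preliminary linear normalization for each) to get singular vector fields $X_1, X_2$ with $\mathrm{exp}(X_i)(\Gamma_0)=\Gamma_i$. Then $\Phi := \mathrm{exp}(X_2)\circ\mathrm{exp}(-X_1)$ satisfies $\Phi(\Gamma_1)=\Gamma_2$; it remains to exhibit $\Phi$ itself as a single time-$1$ flow. Since each $\mathrm{exp}(\pm X_i)$ has the same linear part as the corresponding linear normalizing map and these can be absorbed, one reduces to composing two flows tangent to the identity, or two flows with unipotent/nilpotent generators, for which the composition is again $\mathrm{exp}(Z)$ for the (Krull-convergent) Campbell--Hausdorff series $Z$; and $Z$ is singular because $X_1,X_2$ are. \textbf{The main obstacle} I anticipate is precisely this last bookkeeping: verifying that the composite normalizing diffeomorphism lands again in a one-parameter flow rather than just being a finite composition of flows. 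If a clean single-flow argument is awkward, the cleanest route is instead to observe that it suffices to connect \emph{each} $\Gamma_i$ to the fixed model $\Gamma_0$ by a geodesic — and if the definition of ``connected by a geodesic'' is read as an equivalence-type relation one must still check transitivity, which is again the BCH point. I would therefore structure the write-up around the explicit $X=g(x)\partial_y$ computation, which is the genuinely new content, and handle the composition either via BCH convergence in the Krull topology or by a direct verification that the relevant product of shear-type flows is itself a shear-type flow.
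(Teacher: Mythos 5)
Your key computation --- that the shear $\psi_s(x,y)=(x,\,y+s\,g(x))$ is the flow of the singular vector field $g(x)\,\partial/\partial y$, so a smooth graph $y=g(x)$ is the image of $(y=0)$ under a single time-$1$ flow --- is exactly the engine of the paper's proof. The problem is your global architecture. By routing both curves through the fixed model $\Gamma_0\equiv(y=0)$ you are forced to exhibit $\exp(X_2)\circ\exp(-X_1)$ as a single exponential, and the definition of completeness really does require a single geodesic between \emph{any} two curves, so this transitivity is not free. As written, that step is a genuine gap: you flag it yourself as the ``main obstacle'' and offer only sketches. (It could in principle be closed with the paper's later machinery --- both conjugated shears are nilpotent, a composition of unipotent diffeomorphisms is unipotent, hence formally embeddable by Remark~\ref{rem:nil-to-unip}, and then Theorem~\ref{teo:formal-complete} converts the formal geodesic into an analytic one --- but that is a forward reference to much heavier results than this proposition warrants.)

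The fix, which is what the paper does, is to normalize the \emph{pair} of curves simultaneously rather than each one separately: since being connected by a geodesic is invariant under conjugation of the vector field by a coordinate change, choose coordinates in which $\Gamma_1\equiv(y=0)$ and $\Gamma_2$ is not tangent to $x=0$ (always possible, as the $y$-axis direction only has to avoid two tangent lines). Then $\Gamma_2\equiv(t,a(t))$ and the single shear $\exp\bigl(a(x)\,\partial/\partial y\bigr)$ carries $\Gamma_1$ to $\Gamma_2$; pulling the vector field back by the coordinate change gives one singular vector field whose time-$1$ flow conjugates the original curves. No composition of flows, no BCH, no model curve. I would restructure your write-up around this one-shear argument and delete the assembly step entirely.
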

 \begin{proof}
   Consider two smooth curves $\Gamma$, $\Gamma'$. Up to a change of coordinates
   we may assume $\Gamma \equiv (y=0)$ and that $\Gamma'$ is not tangent to
   $x=0$. Thus, $\Gamma$ and $\Gamma'$ admit Puiseux parametrizations
   $(t,0)$ and $(t,a(t))$, respectively. As a consequence, the local
   diffeomorphism $\psi (x,y) = (x,y+ a(x))$ conjugates $\Gamma$ and $\Gamma'$.
   Since $\psi = \mathrm{exp} ( a(x) \partial / \partial y)$ we are done.
 \end{proof}
The following theorem implies Theorem \ref{teo:forcomplete}.
\begin{theorem}
\label{teo:formal-complete}
Let $\Gamma$ and $\Gamma'$ be two plane branches that are conjugated by the
exponential $\mathrm{exp}(\hat{X})$ of a singular formal vector field. Then they
are conjugated by a local diffeomorphism embedded in the flow of a singular
holomorphic vector field $X$. Moreover if $\hat{X}$ is nilpotent we may assume
that $X$ is nilpotent.
\end{theorem}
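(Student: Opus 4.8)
The plan is to transfer the problem into a genuinely finite-dimensional one by reducing modulo a high power of the maximal ideal. Since $\Gamma$ and $\Gamma'$ are plane branches with finite conductor $c$, their analytic type is determined by a finite jet of their Puiseux parametrizations: there exists an integer $M$ (depending only on the topological type, e.g. $M = c + n$ will do) such that two branches with the same $M$-jet of parametrization are analytically conjugate, and moreover the conjugating diffeomorphism can be taken to agree with any prescribed one up to order controlled by $M$. First I would fix a Puiseux parametrization $(t^n, y(t))$ of $\Gamma$ and note that $\varphi := \mathrm{exp}(\hat X)$ sends it to a parametrization of $\Gamma'$; the key point is that only finitely many coefficients of $\hat X$ — say those in $\mathfrak m / \mathfrak m^{M}$ — influence the $M$-jet of $\varphi(\Gamma)$, because $X$ is singular (it raises $\mathfrak m$-order, or at worst preserves it) so the $M$-jet of $\mathrm{exp}(\hat X)$ depends only on the $M$-jet of $\hat X$.

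Next I would replace $\hat X$ by its truncation $X_0 := j^{M}\hat X$, a \emph{polynomial} (hence convergent) singular vector field. The diffeomorphism $\mathrm{exp}(X_0)$ has the same $M$-jet as $\varphi$ (one checks that truncating the generator at order $M$ only alters the time-$1$ map in orders $> M$, using again singularity of $X$), so $\mathrm{exp}(X_0)(\Gamma)$ is a branch $\Gamma''$ whose parametrization has the same $M$-jet as that of $\Gamma'$. By the choice of $M$, $\Gamma''$ and $\Gamma'$ are then analytically conjugate by a local diffeomorphism $g$ which moreover can be chosen tangent to the identity to high order — in fact by the earlier elimination machinery (Corollaries \ref{cor:elimination-of-suitable-divisors}, \ref{cor:contact-exponent-for-nilpotent-goes-away}, \ref{cor:exponent-in-semigroup-away} and Theorem \ref{the:elimination-of-terms-greater-than-lambda}) one eliminates the co-final tail of the Puiseux expansion by a \emph{single} analytic diffeomorphism, and this tail-normalizing map is itself the exponential of a convergent (indeed nilpotent, being of the form $f(x,y)\,\partial/\partial y$ with $f \in \mathfrak m^2$) vector field $X_1$. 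Composing, $\mathrm{exp}(X_1)\circ \mathrm{exp}(X_0)$ conjugates $\Gamma$ to $\Gamma'$, and by the Baker–Campbell–Hausdorff formula for singular vector fields (the relevant series converges in the Krull topology and, since both generators are singular and one may moreover arrange the composition to be unipotent, actually converges in $\mathcal O$) this composition is the time-$1$ flow $\mathrm{exp}(X)$ of a single \emph{holomorphic} singular vector field $X$.

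The main obstacle is the last step: passing from ``conjugate by a composition of two exponentials of holomorphic singular vector fields'' to ``conjugate by the exponential of a single holomorphic singular vector field.'' Over formal power series this is immediate from BCH once one knows each factor is embeddable in a formal flow, but convergence of the BCH series is not automatic — it is precisely the kind of small-divisor phenomenon that can fail (cf. \cite{Zhang-jde-2011}, \cite{ribon-jde-2012}). I would handle this by arranging that $\mathrm{exp}(X_0)$ can be taken \emph{unipotent} (absorb its linear part, which is the linear part of $\mathrm{exp}(\hat X)$ and hence of $\varphi$, into a separate linear conjugacy that is itself a flow, using Corollary \ref{cor:equivalence-under-composition-of-flows} / Remark \ref{rem:nil-to-unip}); for unipotent diffeomorphisms the logarithm and the BCH series terminate in finite order on each jet and converge in $\mathcal O$, so the product is genuinely $\mathrm{exp}(X)$ with $X$ holomorphic. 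For the nilpotent addendum, one observes that every vector field produced along the way ($X_0$ truncated from nilpotent $\hat X$, and the tail-killers $X_1$) has vanishing linear part when $\hat X$ is nilpotent, hence is nilpotent, and BCH of nilpotent vector fields is again nilpotent; so $X$ is nilpotent, completing the proof.
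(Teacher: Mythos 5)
Your opening move --- truncating $\hat X$ to a polynomial (hence convergent) singular vector field whose exponential reproduces the $M$-jet of $\mathrm{exp}(\hat X)$ --- is exactly the first step of the paper's proof, and that part is sound. The proof breaks down at the end. You reduce to showing that $\mathrm{exp}(X_1)\circ\mathrm{exp}(X_0)$, a composition of two time-one maps of convergent vector fields, is itself the time-one map of a \emph{single convergent} vector field, and you claim that after arranging unipotence ``the logarithm and the BCH series terminate in finite order on each jet and converge in $\mathcal O$.'' Finite-order termination on each jet is true and is precisely why the \emph{formal} infinitesimal generator exists (Remark \ref{rem:nil-to-unip}), but it gives no control whatsoever on convergence: the formal logarithm of a convergent unipotent diffeomorphism is generically divergent (already in dimension one, for $z\mapsto z+z^2+\cdots$, by \'Ecalle--Voronin theory \cite{Ecalle, Martinet-Ramis2}; the papers \cite{Zhang-jde-2011, ribon-jde-2012} cited in the introduction concern exactly this obstruction). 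So the BCH step only returns you to the hypothesis of the theorem --- conjugacy by the exponential of a \emph{formal} vector field --- and the argument is circular. (A secondary, smaller gap: the assertion that the co-final tail can be killed by a single diffeomorphism which is moreover the exponential of a convergent nilpotent field of the form $f\,\partial/\partial y$ is plausible but unproved; the elimination corollaries you invoke remove one coefficient per exponential.)

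The paper avoids merging two exponentials altogether. With $X_k=j^k\hat X$ and $\Gamma_k=\mathrm{exp}(X_k)(\Gamma)$ (your $\Gamma''$), it constructs a single convergent diffeomorphism $\theta(x,y)=(x,\,y+h(x,y)\gamma(x,y))$, where $h=0$ is an equation of $\Gamma$, so that $\theta$ fixes $\Gamma$ pointwise-as-a-curve and sends $\Gamma'$ to $\Gamma_k$; the defining equation for $\gamma$ restricted to $\Gamma'$ is solvable for $k\gg1$ because $b_k-b\to 0$ in the Krull topology, so its order eventually exceeds the conductor and lies in the semigroup. Then
\begin{equation*}
\mathrm{exp}(\theta^{*}X_k)(\Gamma)=\bigl(\theta^{-1}\circ\mathrm{exp}(X_k)\circ\theta\bigr)(\Gamma)
=\theta^{-1}(\Gamma_k)=\Gamma',
\end{equation*}
and $\theta^{*}X_k$ is a single convergent vector field (nilpotent if $\hat X$ is, since conjugation preserves the linear part's nilpotency). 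In other words: instead of correcting the curve after the flow and then recombining two flows, correct the \emph{vector field} by a conjugation that stabilizes $\Gamma$. If you want to salvage your argument, this is the missing idea.
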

 \begin{proof}
   Assume $\Gamma \neq \Gamma'$ since the result is trivial otherwise.  Up to a
   linear change of coordinates we may assume that none of the tangent cones of
   the curves $\Gamma$ and $\Gamma'$ is the $OY$ axis.  The curves $\Gamma$ and
   $\Gamma'$ have Puiseux parametrizations $\alpha (t) = (t^{n}, a(t))$ and
   $\beta (t) = (t^n, b(t))$ respectively where $n$ is the common multiplicity
   at the origin. By hypothesis, there exists
   $\rho \in \widehat{\mathrm{Diff}} ({\mathbb C},0)$ such that
   $(\mathrm{exp}(\hat{X}) \circ \alpha)(t) \equiv (\beta \circ \rho)(t)$.

   Write
   $\hat{X}= \hat{A} (x,y) \frac{\partial}{\partial x} + \hat{B} (x,y)
   \frac{\partial}{\partial y}$.  Consider a sequence $(X_k)_{k \geq 1}$ of
   singular vector fields that converge to $\hat{X}$ in the ${\mathfrak m}$-adic
   topology. For instance this can be obtained by defining
   $X_k = A_k (x,y) \frac{\partial}{\partial x} + B_k (x,y)
   \frac{\partial}{\partial y}$ where $A_k$ (resp. $B_k$) is the polynomial of
   degree less or equal than $k$ such that
   $\hat{A} - A_k \in {\mathfrak m}^{k+1}$ (resp.
   $\hat{B} - B_k \in {\mathfrak m}^{k+1}$) for $k \geq 1$.  Analogously we
   choose a sequence $(\rho_k)_{k \geq 1}$ in $\mathrm{Diff} ({\mathbb C},0)$
   converging to $\rho$ in the Krull topology.  We define the curve $\Gamma_k$
   as $\mathrm{exp}(X_k) (\Gamma)$ and denote
   $(x_k (t), y_k (t))= (\mathrm{exp}(X_k) \circ \alpha \circ \rho_{k}^{-1})(t)$
   for $k \geq 1$.  The sequence $(x_k (t), y_k (t))_{k \geq 1}$ converges to
   $(t^{n}, b(t))$ in the Krull topology.  Consider $\sigma_k (t)$ the
   holomorphic function such that $\sigma_k (t)^{n} \equiv x_k (t)$ and
   $(\sigma_{k})' (0)=1$ for $k \gg1$.  Since $(\sigma_{k})' (0) \neq 0$, it is
   a local diffeomorphism and its inverse $\sigma_k^{-1}$ exists.  The
   sequence $(\sigma_k)_{k \geq 1}$ converges to $t$ in the Krull topology.
   Thus $(t^n, b_k (t)):= (x_k, y_k) \circ \sigma_k^{-1} (t)$ is a
   parametrization of $\Gamma_k$ that converges to $(t^{n}, b(t))$ in the Krull
   topology when $k \to \infty$.

   Since $\Gamma$ and $\Gamma_k$ are conjugated by a local diffeomorphism
   contained in a one-parameter flow, it suffices to show that for fixed
   $k \gg1$ there exists a local diffeomorphism
   $\theta \in \mathrm{Diff} ({\mathbb C}^{2},0)$ such that
   $\theta (\Gamma)= \Gamma$ and $\theta (\Gamma') = \Gamma_k$.  Indeed then
   $\mathrm{exp} (\theta^{*} X_k)(\Gamma)= \Gamma'$.  Moreover, if $\hat{X}$ is
   nilpotent then $X_k$ is nilpotent for any $k \geq 1$, since $\hat{X}$ and
   $X_k$ have the same linear part at $(0,0)$ and $\theta^{*} X_k$ is nilpotent
   as a conjugate of $X_k$.

   Thus, we need to prove that there exists $\theta$ with
  \begin{equation*}
    \left\{
      \begin{array}{l}
        (\theta\circ \alpha) (t) \equiv \alpha(t)\\
        \theta (t^n, b(t)) \equiv (t^n, b_k(t)).
      \end{array}
    \right.
  \end{equation*}
  
  Let $h(x,y)=0$ be a local (irreducible) equation of $\Gamma$ and define
  \begin{equation*}
    \theta(x,y) = (x, y + h(x,y)\gamma(x,y)).
  \end{equation*}
  If we prove that there exists a holomorphic function $\gamma(x,y)$ for which
  the conditions on $\theta$ are satisfied, we are done. The fact that
  $\theta(\alpha(t))\equiv \alpha(t)$ is obvious by construction. The other
  condition, $ \theta (t^n, b(t)) \equiv (t^n, b_k(t))$ is equivalent to
  \begin{equation}
  \label{equ:conjfor}
    \gamma (t^n, b(t)) \equiv \frac{b_k (t) - b(t)}{h(t^n, b(t))}.
  \end{equation}
  The denominator $h(t^n, b(t))$ is not identically $0$ since
  $\Gamma \neq \Gamma'$.  The right hand side of Equation (\ref{equ:conjfor})
  converges to $0$ in the Krull topology when $k \to \infty$. Thus there exists
  a solution of Equation (\ref{equ:conjfor}) for some $k >1$, which finishes the
  proof.
 \end{proof}
 \begin{corollary}
 \label{cor:embedding-unipotent}
 Let $\Gamma, \Gamma'$ be two plane branches conjugated by a unipotent formal diffeomorphism
$\psi \in \widehat{\mathrm{Diff}} ({\mathbb C}^{2},0)$. Then $\Gamma$ and $\Gamma'$ are
conjugated by a local diffeomorphism embedded in a one-parameter group generated by
a nilpotent vector field.
 \end{corollary}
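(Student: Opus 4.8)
The plan is to deduce this directly from Theorem~\ref{teo:formal-complete}: the only thing to verify is that its hypothesis is satisfied with a \emph{nilpotent} formal vector field, after which the analytic content is supplied by that theorem.

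First I would recall, via Remark~\ref{rem:nil-to-unip}, that a unipotent formal diffeomorphism admits a nilpotent formal infinitesimal generator. Concretely, regard $\psi$ through its action $\psi^{*}\colon\hat{\mathcal O}\to\hat{\mathcal O}$, $f\mapsto f\circ\psi$. Since $\psi$ fixes the origin, $\psi^{*}$ preserves ${\mathfrak m}$ and hence every power ${\mathfrak m}^{N}$; since $D_{0}\psi$ is unipotent, the automorphism induced by $\psi^{*}$ on each finite-dimensional quotient $\hat{\mathcal O}/{\mathfrak m}^{N}$ is unipotent (it is block triangular for the ${\mathfrak m}$-adic filtration, with graded blocks given by symmetric powers of $D_{0}\psi$, which are unipotent). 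Therefore $\psi^{*}-\mathrm{id}$ is nilpotent on each $\hat{\mathcal O}/{\mathfrak m}^{N}$, the series $\hat X:=\sum_{k\ge 1}\frac{(-1)^{k+1}}{k}(\psi^{*}-\mathrm{id})^{k}$ converges in the Krull topology, and the standard computation shows $\hat X$ is a $\mathbb{C}$-derivation of $\hat{\mathcal O}$ with $\exp(\hat X)=\psi^{*}$ and $\hat X({\mathfrak m})\subseteq{\mathfrak m}$, i.e.\ $\hat X$ is a singular formal vector field. Its linear part is $\log(D_{0}\psi)$, which is nilpotent because $D_{0}\psi$ is unipotent, so $\hat X$ is nilpotent.

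Then, since $\psi(\Gamma)=\Gamma'$ and $\psi=\exp(\hat X)$, the branches $\Gamma$ and $\Gamma'$ are conjugated by the exponential of a singular nilpotent formal vector field. The ``moreover'' clause of Theorem~\ref{teo:formal-complete} now yields a holomorphic nilpotent vector field $X$ together with a local diffeomorphism embedded in its one-parameter group that conjugates $\Gamma$ and $\Gamma'$, which is exactly the claim.

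I do not expect a genuine obstacle: all the analytic substance is already packed into Theorem~\ref{teo:formal-complete}, and the reduction is the standard Krull-convergent logarithm argument recorded in Remark~\ref{rem:nil-to-unip}. The two points worth a line of care are the convergence of the logarithm (guaranteed by the unipotence of $D_{0}\psi$ on all symmetric powers, hence of $\psi^{*}$ on each jet space) and the verification that $\hat X$ is singular in the sense used here, i.e.\ has an equilibrium at the origin, which holds simply because $\psi$ fixes the origin.
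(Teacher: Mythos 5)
Your proposal is correct and follows exactly the paper's route: the paper derives this corollary as an immediate consequence of Remark~\ref{rem:nil-to-unip} (whose content you spell out via the Krull-convergent logarithm of $\psi^{*}$) together with the ``moreover'' clause of Theorem~\ref{teo:formal-complete}. The only difference is that you supply the standard proof of the remark rather than citing it, which is harmless.
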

We will use the next well-known result.
\begin{remark}[{cf. \cite{Ecalle, Martinet-Ramis2}}]
\label{rem:nil-to-unip}
The exponential provides a bijection between the set of formal nilpotent singular vector fields
and the set of unipotent formal diffeomorphisms. Moreover, it particularizes to a bijection between
the Lie algebra of formal vector fields with vanishing linear part at the origin 
and the group of formal diffeomorphisms with identity linear part.
\end{remark}
Corollary \ref{cor:embedding-unipotent} is an immediate consequence of
Theorem \ref{teo:formal-complete} and Remark \ref{rem:nil-to-unip}.
\begin{corollary}
\label{cor:2-geodesic}
Let $\Gamma$ and $\Gamma'$ be two plane branches in the same class of analytic conjugacy.
There exists a nilpotent vector field $X$ and a linear vector field $Y$ such that
$(\mathrm{exp}(Y) \circ \mathrm{exp}(X)) (\Gamma)=\Gamma'$.
\end{corollary}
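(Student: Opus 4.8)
The plan is to reduce the general analytic conjugacy to the unipotent case already handled by Corollary \ref{cor:embedding-unipotent}, peeling off a linear part that will be realized as the exponential of a linear vector field.

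First I would take an analytic diffeomorphism $\varphi \in \mathrm{Diff}({\mathbb C}^{2},0)$ with $\varphi(\Gamma) = \Gamma'$, which exists precisely because $\Gamma$ and $\Gamma'$ lie in the same analytic class. Following the elementary factorization $\varphi = D_0\varphi \circ \psi$ with $\psi := (D_0\varphi)^{-1} \circ \varphi$ (cf. Corollary \ref{cor:equivalence-under-composition-of-flows}), one notes that $\psi$ is a local holomorphic diffeomorphism whose linear part at the origin is the identity, hence in particular unipotent. Setting $\Gamma'' := \psi(\Gamma)$, the diffeomorphism $\psi$ conjugates $\Gamma$ to $\Gamma''$ while $D_0\varphi$ conjugates $\Gamma''$ to $\Gamma'$.

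Next I would apply Corollary \ref{cor:embedding-unipotent} to the pair $(\Gamma,\Gamma'')$: since $\psi$ is a unipotent diffeomorphism conjugating them, there is a nilpotent vector field and a value of the parameter in its one-parameter group sending $\Gamma$ to $\Gamma''$. Absorbing the time parameter into the vector field — the rescaling $s X$ of a nilpotent vector field $X$ is again nilpotent, and $\mathrm{exp}(sX)=\psi_{X,s}$ — one obtains a nilpotent vector field $X$ with $\mathrm{exp}(X)(\Gamma) = \Gamma''$. Finally I would write the linear isomorphism $D_0\varphi$ as $\mathrm{exp}(Y)$ for a linear vector field $Y$: using the surjectivity of the complex matrix exponential $\exp\colon {\mathfrak{gl}}_2({\mathbb C}) \to \mathrm{GL}_2({\mathbb C})$, choose a matrix $M$ with $\exp(M) = D_0\varphi$ and let $Y = (ax+by)\pt{x} + (cx+dy)\pt{y}$ be the linear vector field whose coefficient matrix is $M$; its time-$1$ flow acts on $({\mathbb C}^{2},0)$ as the linear map $\exp(M) = D_0\varphi$. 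Then $(\mathrm{exp}(Y) \circ \mathrm{exp}(X))(\Gamma) = \mathrm{exp}(Y)(\Gamma'') = D_0\varphi(\Gamma'') = \Gamma'$, which is what we want.

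The only point requiring an external fact — and it is hardly an obstacle — is the surjectivity of the complex matrix exponential, which is classical: over ${\mathbb C}$ every invertible matrix is an exponential. Everything else is bookkeeping with the factorization $\varphi = D_0\varphi\circ\psi$ and an invocation of the already-established Corollary \ref{cor:embedding-unipotent}; in particular no new analytic work is needed, since the hard part of embedding a unipotent conjugacy into a nilpotent flow was done in Theorem \ref{teo:formal-complete}.
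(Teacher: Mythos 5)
Your proof is correct and follows essentially the same route as the paper: factor the conjugating diffeomorphism as its linear part composed with a unipotent map, handle the unipotent factor via Corollary \ref{cor:embedding-unipotent}, and realize the linear part as $\mathrm{exp}(Y)$ using the surjectivity of the matrix exponential over $\mathbb{C}$. The only extra touch is your explicit rescaling of the time parameter into the nilpotent field, which the paper leaves implicit.
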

\begin{proof}
Let $\psi \in {\mathrm{Diff}} ({\mathbb C}^{2},0)$ such that $\psi (\Gamma) = \Gamma'$.
Then we have $\psi = L \circ \sigma$ where $L$ is the linear part of $\psi$ at the origin
and the linear part of $\sigma$ at the origin is the identity.
Denote $\overline{\Gamma} = \sigma (\Gamma)$. We have
$L(\overline{\Gamma}) = \Gamma'$.
There exists a nilpotent vector field $X$ such that
$\mathrm{exp}(X)(\Gamma) = \overline{\Gamma}$ by Corollary \ref{cor:embedding-unipotent}.
Moreover, $L$ is of the form $\mathrm{exp} (Y)$ for some linear vector field.
Therefore we obtain $(\mathrm{exp}(Y) \circ \mathrm{exp}(X)) (\Gamma)=\Gamma'$.
\end{proof}

We have obtained the analytic reduction of holomorphic branches
\cite{Hefez-Hernandes-classification} to short parametrizations.
\begin{definition}
  Let $\Gamma$ be a germ of plane branch. We denote by $\Lambda$ the set of
  contact exponents between $\Gamma$ and singular vector fields.  Notice that
  $\Lambda + n$ is the set of orders of contact of K\"{a}hler differentials with
  $\Gamma$ by Theorem \ref{the:contact-exponent-is-contact-minus-n}.
\end{definition}

\begin{corollary}\label{cor:analytic-classification}
  Let $\Gamma$ be a branch at $\cc$ with  prepared 
   irreducible Puiseux parametrization
  \begin{equation*}
    \Gamma\equiv \varphi(t)=(t^n, \sum_{i\geq m}^{\infty}a_it^i).
  \end{equation*}
  Let $\lambda$ be its Zariski invariant (or $\lambda=\infty$) and $c>\lambda$
  the conductor of the semigroup associated to $\Gamma$. There is a
  nilpotent singular vector field $X$ 
  such that
  \begin{equation*}
    \mathrm{exp}(X)(\Gamma)\equiv \overline{\varphi}(t) =
    (t^n,  a_m 
    t^m + a_{\lambda}t^{\lambda} +  \sum_{i>\lambda}^{c-1}\overline{a}_it^i)
  \end{equation*}
  with $\overline{a}_i=0$ for $i \in \Lambda$. 
\end{corollary}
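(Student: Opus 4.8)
The plan is to eliminate the unwanted coefficients of the Puiseux parametrization one exponent at a time, in increasing order, using at each stage a diffeomorphism embedded in a nilpotent flow provided by the elimination criteria already established. First I would dispose of the terms with exponents below $\lambda$: by Theorem \ref{teo:Zar-inv}, if $\lambda<c$ then the only positive integers that are contact exponents of some singular vector field with $\Gamma$ but not of a nilpotent one are $m$ and $\lambda$; hence every exponent $i$ with $m<i<\lambda$ that lies in $\Lambda$ is in fact the contact exponent of a nilpotent vector field, and Corollary \ref{cor:contact-exponent-for-nilpotent-goes-away} lets us kill $a_i$ by a diffeomorphism in a nilpotent flow without disturbing the earlier coefficients. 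Applying these in order of increasing $i$, and noting that after each step the tail is still a prepared Puiseux expansion by Remark \ref{rem:exponent-in-semigroup-away}, we arrive after finitely many steps at a parametrization whose only possibly nonzero coefficients below $\lambda$ are $a_m$ itself (and $a_m\neq0$, which cannot and should not be removed). (If $\lambda=\infty$ the argument only uses the $i<c$ range and terminates likewise.)

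Next I would handle the exponents $i$ with $\lambda<i<c$ lying in $\Lambda$. For each such $i$ there is a singular vector field $X_i$ with $(X_i,\Gamma)_{(0,0)}=i$; since $i>\lambda$, Proposition \ref{pro:lambda} forces $X_i$ to be nilpotent, so Theorem \ref{the:elimination-of-terms-greater-than-lambda} (equivalently Corollary \ref{cor:contact-exponent-for-nilpotent-goes-away}) eliminates the term of order $i$ via a diffeomorphism in a nilpotent flow, again leaving the coefficients of order $<i$ intact. Processing $i=\lambda+1,\lambda+2,\dots,c-1$ in increasing order, each step preserves the preparedness of the expansion and the normalizations achieved before it, so finitely many steps produce a parametrization $(t^n, a_m t^m + a_\lambda t^\lambda + \sum_{\lambda<i<c}\overline{a}_i t^i)$ with $\overline{a}_i=0$ for every $i\in\Lambda$. (If $\lambda=\infty$, this range is empty and nothing is needed.) The coefficient $a_\lambda$ is left untouched throughout: $\lambda\notin\Lambda$ would contradict $\lambda$ being a contact exponent, so it is exactly the term the criteria refuse to erase, consistent with the statement.

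Finally I would compose all the nilpotent-flow diffeomorphisms obtained above into a single conjugacy. The subtlety is that a composition of time-one maps of nilpotent vector fields need not a priori be the time-one map of one nilpotent vector field; here is where I would invoke Corollary \ref{cor:embedding-unipotent} together with Remark \ref{rem:nil-to-unip}: each elementary step has identity linear part at the origin (its vector field has vanishing linear part), so the composition $\psi$ is a formal — indeed local — diffeomorphism with identity linear part, hence unipotent, and it conjugates $\Gamma$ to the normalized branch; by Remark \ref{rem:nil-to-unip} $\psi=\mathrm{exp}(X)$ for a nilpotent formal vector field, and by Theorem \ref{teo:formal-complete} (the ``nilpotent'' clause) we may replace it by a genuine nilpotent \emph{holomorphic} $X$ with $\mathrm{exp}(X)(\Gamma)$ the asserted branch. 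I expect the main obstacle to be bookkeeping: making sure that each elimination step only affects coefficients of order $\geq$ the current exponent (so the induction on $i$ really closes) and that preparedness is maintained, i.e. checking carefully that the hypotheses of Corollary \ref{cor:contact-exponent-for-nilpotent-goes-away} and Proposition \ref{pro:lambda} are met at every stage rather than only at the start.
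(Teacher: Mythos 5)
Your overall strategy is the paper's: eliminate unwanted Puiseux coefficients one exponent at a time by exponentials of nilpotent vector fields (via Corollary \ref{cor:exponent-in-semigroup-away}, Corollary \ref{cor:contact-exponent-for-nilpotent-goes-away} and Theorem \ref{teo:Zar-inv}/Proposition \ref{pro:lambda}), then repackage the composition as a single unipotent diffeomorphism and invoke Remark \ref{rem:nil-to-unip} together with Corollary \ref{cor:embedding-unipotent} (i.e.\ Theorem \ref{teo:formal-complete}) to realize it as $\mathrm{exp}(X)$ for a nilpotent holomorphic $X$. That final repackaging step is handled correctly and exactly as in the paper.

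There is, however, a genuine gap: you stop the elimination at the exponent $c-1$ and claim that ``finitely many steps'' already produce the short parametrization $(t^n, a_m t^m + a_\lambda t^\lambda + \sum_{\lambda<i<c}\overline{a}_i t^i)$. After finitely many eliminations the second coordinate still carries an infinite tail $\sum_{i\geq c}\overline{a}_i t^i$, and the conclusion of the corollary requires this tail to vanish (every $i\geq c$ lies in the semigroup, hence in $\Lambda$ by Corollary \ref{cor:exponent-in-semigroup-away}, so the statement $\overline{a}_i=0$ for $i\in\Lambda$ forces the second coordinate to be a polynomial of degree $<c$). Killing the tail requires infinitely many elimination steps, and therefore an infinite composition of exponentials; your argument that the composition is unipotent and embeddable only works once you know this infinite composition converges. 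The paper supplies precisely this: one can choose the normalizing fields $X_j$ with $X_j\to 0$ in the ${\mathfrak m}$-adic topology (and with the off-diagonal entries $c_j$ of their linear parts eventually zero), so that the composition of the exponentials, taken in the appropriate order, converges in the Krull topology to a well-defined \emph{formal} unipotent diffeomorphism $\psi$; only then does Corollary \ref{cor:embedding-unipotent} apply to replace $\psi$ by the time-one map of a genuine nilpotent holomorphic vector field. Without this convergence argument (or an alternative one-shot elimination of the co-final terms that you then show to be unipotent), your proof does not reach the asserted normal form.
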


A parametrization like in Corollary \ref{cor:analytic-classification} is called,
a \emph{short parametrization}.
 \begin{proof}
 In order to simplify $\Gamma$
 we remove step by step coefficients of $t^j$  in the second component of the 
 Puiseux parametrization of $\Gamma$ for some $j >n$.
 The normalizing map is of the form $\mathrm{exp} (X_j)$ where $(X_j, \Gamma)_{(0,0)} =j$
 and $X_j$ is a nilpotent singular vector field by Corollary   \ref{cor:exponent-in-semigroup-away}
 and Theorem \ref{teo:Zar-inv}.
 Indeed it is easy to see that we may assume
 that $X_j \to 0$ when $j \to \infty$ in the ${\mathfrak m}$-adic topology.
 Moreover the tangent cone of $\Gamma$ defines a singular point $P$ of
 $\overline{(X_j)}_1$ in the divisor $E_1$ of the blow-up of the origin for $j >n$, since otherwise
 $n < j  = (X_j, \Gamma)_{(0,0)} = n$.
 We deduce that $\mathrm{exp}(X_j)$ is a unipotent diffeomorphism whose linear part has matrix
  \begin{equation*}
    \left(
    \begin{array}{cc}
    1 & c_j \\
    0 & 1
    \end{array}
    \right)
  \end{equation*}
  where $c_j=0$ if $j \gg1$.
 The limit of the composition of these exponentials,  in the appropriate order,
 is a well-defined formal unipotent diffeomorphism
 $\psi \in \widehat{\mathrm{Diff}} ({\mathbb C}^{2},0)$ conjugating $\Gamma$ with a curve
 with Puiseux parametrization of the form $\overline{\varphi}(t)$.
 The result is a consequence of Corollary \ref{cor:embedding-unipotent}.
\end{proof}
 \begin{remark}
   The reduction to normal form in \cite{Hefez-Hernandes-classification} is
   obtained via the action of unipotent diffeomorphisms.  We have just restated
   this fact in the context of holomorphic flows.
 \end{remark}
 The expression of Corollary \ref{cor:analytic-classification} can be simplified
 further by means of another flow (corresponding to a linear change of
 coordinates and a change of parameter):

\begin{lemma}[\cite{Zariski4}]\label{lem:canonical}
  A branch $\Gamma$ whose short parametrization is
  \begin{equation*}
    \Gamma\equiv \varphi(t) = (t^n,  a_m  t^m + a_{\lambda}t^{\lambda} +
    \sum_{i>\lambda}^{c-1}a_it^i)
      \end{equation*}
      is analytically equivalent to
      \begin{equation*}
        \Gamma^{\prime} \equiv (t^{n},
        t^{m}+t^{\lambda} + \sum_{i>\lambda}^{c-1}\overline{a}_it^i)
      \end{equation*}
      where there exist $u,v \in\mathbb{C}^{\star}$ such that
      $\overline{a}_i = v^{m} u^{-i} a_i$.
\end{lemma}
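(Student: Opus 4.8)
The plan is to realise the equivalence by composing a reparametrisation of $t$ with a diagonal linear change of coordinates; neither operation poses a real difficulty, so the proof reduces to bookkeeping of how the coefficients $a_i$ transform. Concretely, I would take the linear diffeomorphism $L(x,y)=(u^n x,\, v^m y)$ for parameters $u,v\in\mathbb{C}^{\star}$ to be chosen, so that $L(\Gamma)$ is parametrised by $L(\varphi(t))=\bigl(u^n t^n,\ v^m\sum_{i\geq m}a_i t^i\bigr)$. Reparametrising by $s=ut$ (which does not alter the branch) rewrites this in standard form as
\begin{equation*}
  L(\Gamma)\equiv\Bigl(s^n,\ \sum_{i\geq m} v^m u^{-i} a_i\, s^i\Bigr),
\end{equation*}
so the transformed coefficients are exactly $\overline{a}_i=v^m u^{-i}a_i$, matching the formula in the statement; note that multiplication by nonzero constants leaves the support of the parametrisation unchanged, so the new parametrisation is again short.

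Next I would fix $u$ and $v$ so as to normalise the first two relevant coefficients. The requirements $\overline{a}_m=v^m u^{-m}a_m=1$ and $\overline{a}_\lambda=v^m u^{-\lambda}a_\lambda=1$ are, after taking the quotient, equivalent to $u^{\lambda-m}=a_\lambda/a_m$ together with $v^m=u^m/a_m$. Since the parametrisation is prepared and $\lambda>m$ we have $\lambda-m\geq 1$, and since $a_m\neq 0$ and $a_\lambda\neq 0$ (we are in a short parametrisation, $a_m$ being the leading coefficient above $t^n$ and $a_\lambda$ being nonzero by the choice of $\lambda$) one may pick $u$ to be any $(\lambda-m)$-th root of $a_\lambda/a_m$ in $\mathbb{C}^{\star}$ and then $v$ to be any $m$-th root of $u^m/a_m$ in $\mathbb{C}^{\star}$.

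With these choices $L\in\mathrm{Diff}(\mathbb{C}^2,0)$ is an invertible linear map and $L(\Gamma)$ has parametrisation $(t^n,\ t^m+t^\lambda+\sum_{i>\lambda}^{c-1}\overline{a}_i t^i)$ with $\overline{a}_i=v^m u^{-i}a_i$; since reparametrisation does not change the underlying branch, $\Gamma$ and this curve $\Gamma'$ are analytically conjugated, which is the assertion. (If $\lambda=\infty$ the $t^\lambda$ term is absent and one simply takes $u=1$ and $v$ an $m$-th root of $a_m^{-1}$. One can moreover take $L$ in the flow of a diagonal linear vector field, so the equivalence is realised by a holomorphic flow, in keeping with the rest of the section.)

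There is no substantive obstacle here: the only steps requiring care are checking that the needed roots exist in $\mathbb{C}^{\star}$ — which is exactly where $a_m\neq 0$, $a_\lambda\neq 0$ and $\lambda>m$ enter — and observing that passing from $\Gamma$ to $L(\Gamma)$ and then reparametrising $t$ is genuinely an analytic equivalence of branches.
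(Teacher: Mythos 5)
Your proof is correct and follows essentially the same route as the paper's: a diagonal linear map $(x,y)\mapsto(u^n x, v^m y)$ followed by the reparametrisation $s=ut$, with $u,v$ chosen so that $v^m u^{-m}a_m=v^m u^{-\lambda}a_\lambda=1$. The only difference is that you spell out explicitly that these two conditions are solvable in $\mathbb{C}^{\star}$ (via $u^{\lambda-m}=a_\lambda/a_m$ and $v^m=u^m/a_m$), which the paper leaves implicit.
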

\begin{proof}
  We define $\psi (x,y) = (u^n x, v^m y)$ for some $u,v \in\mathbb{C}^{\star}$
  to be specified later on. We have
   \begin{equation*}
    (\psi \circ \varphi)(t) = (u^n t^n, v^m a_m t^m + v^m a_{\lambda}t^{\lambda} +
    \sum_{i>\lambda}^{c-1} v^m a_it^i).
   \end{equation*}
 Define the parameter $s=ut$.
The curve $\psi (\Gamma)$ has parametrization
 \begin{equation*}
      (s^n, v^m u^{-m} a_m s^m + v^m u^{-\lambda} a_{\lambda} s^{\lambda} +
    \sum_{i>\lambda}^{c-1} v^m u^{-i} a_i s^i).
   \end{equation*}
It suffices to consider $u,v \in\mathbb{C}^{\star}$ such that
$v^m u^{-m} = a_m^{-1}$ and $v^m u^{-\lambda}  = a_{\lambda}^{-1}$.
\end{proof}
Combining Corollary \ref{cor:analytic-classification}
and  Lemma \ref{lem:canonical} we obtain the following result.
\begin{corollary}\label{cor:equivalence-under-composition-of-flows}
  Let $\Gamma$ be a singular branch in $\cc$  
  having   conductor $c$. 
  Let $(x,y)$ be a local system of coordinates. There exist a local
  diffeomorphism $\psi$ embedded in the flow of a nilpotent vector field, a
  linear map $G$ and a reparametrization $\tau\in \mathrm{Diff}(\mathbb{C},0)$
  such that
  \begin{equation*}
    \big(G\circ\psi(\Gamma)\equiv
    G\circ\psi\circ\varphi \circ \tau \big)(t)=
    \left(t^n, t^m + t^{\lambda} + \sum_{\lambda<i<c}a_it^i\right)
  \end{equation*}
  where $\varphi(t)$ is the parametrization of $\Gamma$ with $a_i=0$ if $i<c$
  and $i \in \Lambda\setminus \left\{ \lambda \right\}$.
\end{corollary}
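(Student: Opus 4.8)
The plan is to chain together the reductions already in hand: Corollary~\ref{cor:analytic-classification} supplies a nilpotent flow carrying $\Gamma$ to a short parametrization with all contact exponents erased, and Lemma~\ref{lem:canonical} normalizes the two surviving leading coefficients to $1$ by a diagonal linear map together with a change of parameter. The only genuine work beyond citing these is bookkeeping: first preparing the parametrization, and then repackaging the whole composition in the asserted shape $G\circ\psi$, with $G$ linear and $\psi$ embedded in a single holomorphic nilpotent flow.

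First I would reduce to the prepared case. Let $G_0$ be a linear map such that $G_0(\Gamma)$ has tangent cone $y=0$, so it is parametrized by $(t^n,\sum_{i>n}a_it^i)$; then, following Remark~\ref{rem:exponent-in-semigroup-away}, conjugating successively with diffeomorphisms embedded in the flows of the nilpotent vector fields $x^k\partial/\partial y$ ($k\ge 2$) makes the parametrization prepared. Since each such $x^k\partial/\partial y$ has vanishing linear part, the resulting (a priori formal, Krull-convergent) diffeomorphism $\psi_0$ has identity linear part.

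Next I would apply Corollary~\ref{cor:analytic-classification} to the prepared curve: it yields a nilpotent flow $\psi_1=\mathrm{exp}(X)$ carrying it to $(t^n,\,a_mt^m+a_\lambda t^\lambda+\sum_{\lambda<i<c}\overline a_it^i)$ with $\overline a_i=0$ whenever $i\in\Lambda$ and $i<c$; moreover, by the proof of that corollary, $\psi_1$ has upper-triangular unipotent linear part. Then Lemma~\ref{lem:canonical} provides a diagonal linear map $G_1(x,y)=(u^nx,v^my)$ with $v^mu^{-m}=a_m^{-1}$, $v^mu^{-\lambda}=a_\lambda^{-1}$, and the reparametrization $s=ut$, producing $(s^n,\,s^m+s^\lambda+\sum_{\lambda<i<c}a_is^i)$ with $a_i=v^mu^{-i}\overline a_i$, still vanishing for $i\in\Lambda$, $i<c$: this is the announced normal form.

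Finally I would reorganize. The diffeomorphism built so far is $G_1\circ\psi_1\circ\psi_0\circ G_0$; because $\psi_0$ has identity linear part and $\psi_1$ has upper-triangular unipotent linear part, $\psi_1\circ\psi_0$ is a unipotent formal diffeomorphism, hence so is $\vartheta:=G_0^{-1}\circ\psi_1\circ\psi_0\circ G_0$. Since $\vartheta$ conjugates $\Gamma$ to a branch $\Gamma^\sharp$, Corollary~\ref{cor:embedding-unipotent} yields a local diffeomorphism $\psi$ embedded in the flow of a nilpotent holomorphic vector field with $\psi(\Gamma)=\Gamma^\sharp$. Setting $G:=G_1\circ G_0$ (linear) we get $G(\psi(\Gamma))=(G_1\circ\psi_1\circ\psi_0\circ G_0)(\Gamma)$, the curve in normal form; since $\psi\circ\varphi$ and $\vartheta\circ\varphi$ are two parametrizations of $\Gamma^\sharp$, they differ by a reparametrization, so a suitable $\tau\in\mathrm{Diff}(\mathbb{C},0)$ makes $G\circ\psi\circ\varphi\circ\tau$ equal to the displayed map. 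The main obstacle is exactly this last step: a composition of maps individually embedded in nilpotent flows need not itself be embedded in one — triangular unipotent linear parts of opposite orientation multiply to non-unipotent matrices — so the preparation must be routed through vector fields with trivial (or upper-triangular unipotent) linear part and all residual linear freedom pushed into $G$; only then does Corollary~\ref{cor:embedding-unipotent}, which crucially upgrades a formal unipotent conjugacy to a holomorphic nilpotent flow, close the argument.
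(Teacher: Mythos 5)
Your proposal is correct and follows essentially the same route as the paper: reduce to a prepared parametrization by a linear map plus a map of the form $(x,y)\mapsto(x,y+c(x))$, apply Corollary~\ref{cor:analytic-classification} and Lemma~\ref{lem:canonical}, observe that the accumulated non-linear part is unipotent (upper-triangular linear parts composing with the identity), conjugate it back through the initial linear change, and invoke Corollary~\ref{cor:embedding-unipotent} to realize it as a single nilpotent flow, absorbing all linear factors into $G$. The only cosmetic difference is that you build the preparing map as a Krull-limit of exponentials of $x^k\partial/\partial y$ where the paper simply writes it as $(x,y+c(x))$; these coincide.
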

\begin{proof}
  There exists a linear automorphism $H(x,y)$ such that the tangent cone to
  $\Gamma': = H(\Gamma)$ at the origin is the axis $y=0$.  There exists a local
  diffeomorphism $J(x,y)= (x,y + c(x))$ for some $c(x) \in {\mathbb C}\{x\}$ of
  vanishing order at least $2$ such that $J(\Gamma')$ has a prepared irreducible
  Puiseux parametrization. We apply Corollary \ref{cor:analytic-classification}
  to $J(\Gamma')$ to obtain a unipotent diffeomorphism
  $\phi \in {\mathrm{Diff}} ({\mathbb C}^{2},0)$ such that $\phi (J(\Gamma'))$
  has a short parametrization.  Finally we apply Lemma \ref{lem:canonical} to
  $\phi (J(\Gamma'))$ to obtain a linear isomorphism $K$ such that
  $\Gamma'':=K(\phi (J(\Gamma')))$ has the desired parametrization. The
  diffeomorphism $\phi \circ J$ is unipotent since the linear part $D_0 J$ of
  $J$ at the origin is the identity map.  The conjugate
  $H^{-1} \circ (\phi \circ J) \circ H$ of $\phi \circ J$ is a unipotent
  diffeomorphism $\rho \in {\mathrm{Diff}} ({\mathbb C}^{2},0)$ and then we
  obtain $\Gamma'' = (G \circ \rho)(\Gamma)$ where $G= K \circ H$ is a linear
  map.  Since $\Gamma$ and $\rho (\Gamma)$ are conjugated by a unipotent local
  diffeomorphism, it follows that they are conjugated by a local diffeomorphism
  $\psi$ embedded in the flow of a nilpotent vector field. We obtain
  $\Gamma'' = (G \circ \psi)(\Gamma)$.
\end{proof}
The parametrization provided by Corollary
\ref{cor:equivalence-under-composition-of-flows} is called a \emph{canonical
  parametrization} by Zariski \cite{Zariski4} and the \emph{normal form of
  $\Gamma$} by Hefez-Hernandes \cite{Hefez-Hernandes-classification}. We shall
use the latter terminology.  Moreover, if $\overline{\Gamma}$ is another branch
whose normal form has coefficients $\overline{a}_i$, one can prove (see
\cite{Zariski4} and \cite{Hefez-Hernandes-classification}) that they are
analytically equivalent if and only if there exists $u$ such that
$u^{\lambda-m}=1$ and $\overline{a}_i=u^{i-m}a_i$, which describes the complete
moduli of $\Gamma$.
\section{Non-complete analytic classes}
We provide examples of non-complete analytic classes. Whether or not a single
formal diffeomorphism is embedded in the flow of a formal singular vector field
is deeply related to the spectrum of its linear part and more precisely to the
resonances among its eigenvalues.  For the sake of completeness we recall these
concepts along with some results.  We work in dimension $2$ because that is the
case we are interested in but the results concerning resonances are valid for
any dimension (cf. \cite{ilyashenko-yakovenko-lectures}).

\begin{definition}
Consider a formal singular vector field $X$ whose linear part is in Jordan normal form, in particular
$X$ is of the form
\begin{equation*}
X = \left(  \lambda_1 x + \delta y + \sum_{i+j \geq 2} a_{ij} x^{i} y^{j} \right) \frac{\partial}{\partial x} +
\left(\lambda_2 y + \sum_{i+j \geq 2} b_{ij} x^{i} y^{j} \right) \frac{\partial}{\partial y} .
\end{equation*}
We say that the monomial $x^{i} y^{j} \partial / \partial x$ with
$i \geq 0$, $j \geq 0$, $i+j \geq 1$ and $(i,j) \neq (1,0)$ is resonant if
$i \lambda_1 + j \lambda_2 = \lambda_1$. Analogously
we say that the monomial $x^{i} y^{j} \partial / \partial y$ with
$i \geq 0$, $j \geq 0$, $i+j \geq 1$ and $(i,j) \neq (0,1)$ is resonant if
$i \lambda_1 + j \lambda_2 = \lambda_2$.
\end{definition}
\begin{definition}
Consider a formal diffeomorphism $\psi$ whose linear part is in Jordan normal form, in particular
$\psi$ is of the form
\begin{equation*}
\psi(x,y) = \left(  \lambda_1 x + \delta y + \sum_{i+j \geq 2} a_{ij} x^{i} y^{j} ,
\lambda_2 y + \sum_{i+j \geq 2} b_{ij} x^{i} y^{j} \right) .
\end{equation*}
We say that the monomial $x^{i} y^{j} e_1 := (x^i y^j, 0)$ with
$i \geq 0$, $j \geq 0$, $i+j \geq 1$ and $(i,j) \neq (1,0)$ is resonant if
$\lambda_1^{i}  \lambda_2^{j} = \lambda_1$. Analogously
we say that the monomial $x^{i} y^{j} e_2 := (0,x^i y^j)$ with
$i \geq 0$, $j \geq 0$, $i+j \geq 1$ and $(i,j) \neq (0,1)$ is resonant if
$\lambda_1^{i}  \lambda_2^{j} = \lambda_2$.
A formal diffeomorphism is non-resonant if there are no resonant monomials.
\end{definition}
\begin{remark}
The property of being non-resonant depends only on the eigenvalues of the linear part.
\end{remark}
The next result is Poincar\'{e}'s linearisation map for formal
diffeomorphisms. As is customary, we denote $D_0\psi$ the linear part of a local
diffeomorphism $\psi$.
\begin{proposition}[{cf. \cite[Theorem 4.21]{ilyashenko-yakovenko-lectures}}]
\label{pro:Poincare-linear}
Let $\psi \in \widehat{\mathrm{Diff}} ({\mathbb C}^{2},0)$ be a non-resonant
formal diffeomorphism.  Then $\psi$ is conjugated by a formal diffeomorphism to
$(x,y) \mapsto (\lambda_1 x, \lambda_2 y)$ where $\lambda_1$ and $\lambda_2$ are
the eigenvalues of the linear part $D_0 \psi$ of $\psi$ at $(0,0)$.
\end{proposition}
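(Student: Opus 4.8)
The plan is to prove this by the classical Poincar\'e normalisation scheme: eliminate the nonlinear part one homogeneous degree at a time by successive formal changes of coordinates, the only ingredient being that non-resonance makes the homological equation solvable in each degree. Since non-resonance depends only on the eigenvalues of $D_0\psi$ (as already noted), I would first conjugate $\psi$ by a linear isomorphism so that $D_0\psi$ is in Jordan form. Next I would observe that non-resonance forces $\lambda_1\ne\lambda_2$: if $\lambda_1=\lambda_2$ then $y\,e_1=(y,0)$ is a resonant monomial, since the index $(0,1)$ is admissible for $e_1$ and $\lambda_1^{0}\lambda_2^{1}=\lambda_1$, contradicting the hypothesis; also $\lambda_1\lambda_2\ne 0$ because $D_0\psi$ is invertible. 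Hence in the Jordan form $D_0\psi$ is the diagonal map $L(x,y)=(\lambda_1 x,\lambda_2 y)$, and it suffices to produce a formal diffeomorphism $H$ with $D_0H=\mathrm{id}$ and $H^{-1}\circ\psi\circ H=L$.

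For the inductive step, suppose that after finitely many tangent-to-the-identity changes of coordinates we have $\psi_d=L+R_d+(\text{terms of order}\ge d+1)$ with $R_d=(R_d^{(1)},R_d^{(2)})$ homogeneous of degree $d\ge 2$ (the case $d=2$ being the hypothesis). Conjugating by $\mathrm{id}+P$ with $P$ homogeneous of degree $d$, a direct expansion modulo order $d+1$ gives
\[
(\mathrm{id}+P)^{-1}\circ\psi_d\circ(\mathrm{id}+P)=L+R_d+(L\circ P-P\circ L)+(\text{terms of order}\ge d+1),
\]
so I must solve the homological equation $P\circ L-L\circ P=R_d$. Writing $P^{(k)}=\sum_{|\alpha|=d}c^{(k)}_\alpha x^\alpha$ and $R_d^{(k)}=\sum_{|\alpha|=d}r^{(k)}_\alpha x^\alpha$ with $x^\alpha=x^{\alpha_1}y^{\alpha_2}$ and $\lambda^\alpha=\lambda_1^{\alpha_1}\lambda_2^{\alpha_2}$, this decouples into the scalar equations $(\lambda^\alpha-\lambda_k)c^{(k)}_\alpha=r^{(k)}_\alpha$. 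Since every $\alpha$ with $|\alpha|=d\ge 2$ avoids the forbidden indices $(1,0)$ for $k=1$ and $(0,1)$ for $k=2$, non-resonance yields $\lambda^\alpha\ne\lambda_k$ for all such $\alpha$ and $k$, so $c^{(k)}_\alpha=r^{(k)}_\alpha/(\lambda^\alpha-\lambda_k)$ solves it uniquely; replacing $\psi_d$ by this conjugate advances the induction to degree $d+1$.

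Finally I would pass to the limit: with $H_d=(\mathrm{id}+P_2)\circ\cdots\circ(\mathrm{id}+P_d)$, each factor coincides with $\mathrm{id}$ up to order $d-1$, so the partial compositions stabilise in every finite jet and $H:=\lim_d H_d$ is a well-defined formal diffeomorphism with $D_0H=\mathrm{id}$ (convergence in the Krull topology of Definition \ref{def:krull}, applied componentwise). Since composition is continuous for this topology, $H^{-1}\circ\psi\circ H$ agrees with $L$ to every order and hence equals $L$; composing $H$ with the initial linear conjugacy gives the statement. I do not expect a genuine obstacle here: working in the formal category removes any small-divisor estimates, so the only points requiring care are the bookkeeping that the exceptional monomials $(1,0)$ and $(0,1)$ never interfere (automatic once $d\ge 2$) and the routine check that the infinite composition converges in the Krull topology.
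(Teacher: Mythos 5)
Your proof is correct and is exactly the standard Poincar\'e normalisation argument that the paper delegates to the cited reference (Ilyashenko--Yakovenko, Theorem 4.21): degree-by-degree elimination via the homological equation $(\lambda^\alpha-\lambda_k)c^{(k)}_\alpha=r^{(k)}_\alpha$, solvable by non-resonance, followed by Krull-topology convergence of the composed conjugacies. Your preliminary observation that the paper's definition of resonance (which admits degree-one monomials) forces $\lambda_1\neq\lambda_2$, and hence a diagonal Jordan form, is a necessary and correctly handled step.
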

\begin{corollary}
\label{cor:Poincare-linear}
Let $\psi \in \widehat{\mathrm{Diff}} ({\mathbb C}^{2},0)$ be a non-resonant
formal diffeomorphism.  Then there exists a formal singular vector field $X$
such that $\psi = \mathrm{exp}(\hat{X})$.
\end{corollary}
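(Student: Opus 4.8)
The plan is to reduce to Poincar\'e's linear normal form provided by Proposition \ref{pro:Poincare-linear} and then to observe that a diagonal linear automorphism is always the time-one map of a linear (hence singular) vector field. First, since $\psi \in \widehat{\mathrm{Diff}}({\mathbb C}^{2},0)$ the linear part $D_0\psi$ is an isomorphism, so its eigenvalues $\lambda_1,\lambda_2$ are both non-zero. Because $\psi$ is non-resonant, Proposition \ref{pro:Poincare-linear} yields a formal diffeomorphism $\phi \in \widehat{\mathrm{Diff}}({\mathbb C}^{2},0)$ with
\begin{equation*}
  \phi \circ \psi \circ \phi^{-1} = D, \qquad D(x,y) = (\lambda_1 x, \lambda_2 y),
\end{equation*}
that is, $\psi = \phi^{-1} \circ D \circ \phi$.

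Next I would choose $\mu_1,\mu_2 \in {\mathbb C}$ with $e^{\mu_1}=\lambda_1$ and $e^{\mu_2}=\lambda_2$ (possible since $\lambda_i\neq 0$) and set
\begin{equation*}
  Y = \mu_1 x \frac{\partial}{\partial x} + \mu_2 y \frac{\partial}{\partial y}.
\end{equation*}
Then $Y$ is a singular formal vector field, and since $Y^{k}(x)=\mu_1^{k}x$ and $Y^{k}(y)=\mu_2^{k}y$, the defining formula for the exponential gives directly $\mathrm{exp}(Y) = D$. Finally, let $X := \phi^{*}Y$ be the conjugate of $Y$ by $\phi$, i.e.\ the formal vector field whose one-parameter group is $\phi^{-1}\circ\{\mathrm{exp}(sY)\}_{s}\circ\phi$; it is a formal vector field because $\phi$ is, and since $\phi$ fixes the origin and $Y$ vanishes there, $X$ has an equilibrium at $(0,0)$ and is therefore singular. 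By the naturality of the exponential under conjugation,
\begin{equation*}
  \mathrm{exp}(X) = \phi^{-1}\circ\mathrm{exp}(Y)\circ\phi = \phi^{-1}\circ D\circ\phi = \psi,
\end{equation*}
which is exactly the assertion.

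The only points requiring a little care — and the closest thing to an obstacle — are purely formal: one must know that the conjugate $\phi^{*}Y$ of a formal vector field by a formal diffeomorphism is again a well-defined formal vector field, and that the identity $\mathrm{exp}(\phi^{*}Y) = \phi^{-1}\circ\mathrm{exp}(Y)\circ\phi$ holds for formal (not necessarily convergent) objects. Both are standard: the latter can be verified coefficient by coefficient, or deduced by continuity in the Krull topology from the convergent case together with the construction of $\mathrm{exp}$ recalled just before Proposition \ref{pro:smooth-is-complete}, and it is used in the same spirit in the proof of Theorem \ref{teo:formal-complete}. Everything else is immediate from Proposition \ref{pro:Poincare-linear}.
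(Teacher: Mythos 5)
Your proof is correct and follows essentially the same route as the paper: both reduce to the diagonal linear normal form via Proposition \ref{pro:Poincare-linear}, observe that the diagonal map is $\mathrm{exp}$ of a linear (singular) vector field built from logarithms of the eigenvalues, and transport that vector field back through the conjugating formal diffeomorphism. The paper merely compresses the last two steps into one sentence, while you spell out the verification $\mathrm{exp}(\phi^{*}Y)=\phi^{-1}\circ\mathrm{exp}(Y)\circ\phi$, which is a welcome (and correctly justified) addition.
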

\begin{proof}
The formal diffeomorphism $\psi$ is formally conjugated to a linear diagonal map by
Proposition \ref{pro:Poincare-linear}. Since the latter map
is embedded in the flow of a singular vector field, it follows that
$\psi$ is embedded in the flow of a formal vector field.
\end{proof}
Let us consider the problem of embedding resonant diffeomorphisms in formal
flows.  Let $\psi \in \widehat{\mathrm{Diff}} ({\mathbb C}^{2},0)$ and assume
for simplicity that $(D_0 \psi)(x,y) =(\lambda_1 x, \lambda_2 y)$.  The equation
$\psi = \mathrm{exp} (X)$ implies $D_0 \psi = \mathrm{exp} (D_0 X)$.  Notice
that if $\lambda_1 \neq \lambda_2$ then the choice of the eigenvalues
$\log \lambda_1$, $\log \lambda_2$ determines completely $D_0 X$.
\begin{definition}
  Consider the above setting.  We say that a resonance $x^i y^j e_1$ (resp.
  $x^i y^j e_2$) of $\psi$ is {\it strong} if the monomial
  $x^i y^j \partial / \partial x$ (resp. $x^i y^j \partial / \partial y$) is a
  resonant monomial of the vector field
  $\lambda_1 x \partial / \partial x + \lambda_2 y \partial / \partial y$, i.e
  if $\lambda_1^{i} \lambda_{2}^{j} = \lambda_1$ and
  $i \log \lambda_1 + j \log \lambda_2 = \log \lambda_1$ (resp.
  $\lambda_1^{i} \lambda_{2}^{j} = \lambda_2$ and
  $i \log \lambda_1 + j \log \lambda_2 = \log \lambda_2$).

  A resonance of $\psi$ that is not \emph{strong} will be called {\it weak}
\end{definition}

We will use the next particular case of \cite[Proposition 1.5]{ribon-jde-2012}.
\begin{proposition}
\label{pro:nonembedded}
Let $\psi \in \widehat{\mathrm{Diff}} ({\mathbb C}^{2},0)$ be such that
$(D_0 \psi) (x,y) =(\lambda_1 x, \lambda_2 y)$.  Let
$B: {\mathbb C}^{2} \to {\mathbb C}^{2}$ be a linear map such that
$\mathrm{exp}(B) = D_0 \psi$.  Assume that $j^k \psi = D_0 \psi + f_k$ where
both components of $f_k$ are homogeneous polynomials of degree $k$. Furthermore
assume that $f_k$ contains non-vanishing weakly resonant monomials.  Then $\psi$
is not embedded in the flow of any formal vector field $X$ such that
$D_0 X = B$.
\end{proposition}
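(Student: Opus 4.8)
Being the two-dimensional case of \cite[Proposition 1.5]{ribon-jde-2012}, the statement may simply be quoted; for the reader's benefit I indicate the argument. We argue by contradiction: suppose $\psi=\mathrm{exp}(X)$ for some formal vector field $X$ with $D_0X=B$. Since $j^{k-1}\psi=D_0\psi$, the first nonlinear term of $\psi$ occurs at degree $k$ and equals $f_k$, so it suffices to show that a time-one map $\mathrm{exp}(X)$ with $D_0X=B$ cannot have a weakly resonant monomial anywhere in its Taylor expansion. The plan is to put $X$ into Poincar\'e--Dulac normal form, where this is a direct computation, and then transport the conclusion back along the normalising conjugacy.

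Write $B$ in Jordan form with eigenvalues $\mu_1,\mu_2$, so that $\lambda_i=e^{\mu_i}$. The formal Poincar\'e--Dulac theorem (see, e.g., \cite{ilyashenko-yakovenko-lectures}) produces a formal diffeomorphism $h$ with $D_0h=\mathrm{id}$ such that $h_{*}X=B+N$, where $N$ is a sum of monomial vector fields $x^iy^j\pt{x}$ with $i\mu_1+j\mu_2=\mu_1$ and $x^iy^j\pt{y}$ with $i\mu_1+j\mu_2=\mu_2$; these are exactly the resonant monomials of $B$, and together with $B$ they span the centraliser $\{Y:[B,Y]=0\}$, which is a Lie subalgebra of the formal vector fields. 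Since $[B,N]=0$ we have $\mathrm{exp}(B+N)=\mathrm{exp}(B)\circ\mathrm{exp}(N)$, and a short induction on the degree --- using the Leibniz rule together with the additivity of the resonance relations, e.g. $i\mu_1+j\mu_2=\mu_1$ and $p\mu_1+q\mu_2=\mu_1$ force $(i+p-1)\mu_1+(j+q)\mu_2=\mu_1$, and similarly for the mixed cases --- shows that every monomial occurring in $\mathrm{exp}(N)$, hence in $\Phi:=\mathrm{exp}(B)\circ\mathrm{exp}(N)=h\,\psi\,h^{-1}$, again satisfies the resonance relation of its direction (applying the linear map $\mathrm{exp}(B)$ afterwards only rescales coefficients). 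In other words every nonlinear monomial of $\Phi$ is \emph{strongly} resonant; in particular $\Phi$ has no weakly resonant monomial.

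It remains to transport this back to $\psi=h^{-1}\Phi h$. Comparing the two diffeomorphisms degree by degree, the homogeneous component of $\psi$ in a given degree differs from that of $\Phi$ by an element in the image of the cohomological operator $g\mapsto g\circ D_0\psi-D_0\psi\cdot g$ --- whose image contains no monomial that is resonant for $\psi$ --- plus a universal polynomial in the strictly lower degree homogeneous parts of $h$ and $\Phi$. A weakly resonant monomial is in particular resonant for $\psi$, so it cannot be produced by the cohomological operator; the delicate point, which I expect to be the main obstacle, is to show that it cannot be produced by the polynomial part either. This is done by a weight bookkeeping that tracks, under substitution of power series, both the multiplicative character $\lambda_1^i\lambda_2^j\lambda_1^{-1}$ (resp. $\lambda_1^i\lambda_2^j\lambda_2^{-1}$) and the additive character $i\mu_1+j\mu_2-\mu_1$ (resp. $i\mu_1+j\mu_2-\mu_2$) of a monomial: it shows that the only weakly resonant monomials that could ever arise in $h^{-1}\Phi h$ would have to cancel against weakly resonant monomials already present in $\Phi$, of which there are none. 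Hence $\psi$ carries no weakly resonant monomial, contradicting the assumption on $f_k$. This last step is precisely \cite[Proposition 1.5]{ribon-jde-2012}, which one may invoke directly.
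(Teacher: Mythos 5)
The paper itself contains no proof of this proposition: it is introduced with ``We will use the next particular case of \cite[Proposition 1.5]{ribon-jde-2012}'' and simply quoted. So your opening and closing move --- invoke that reference --- is exactly what the paper does, and on that level the proposal is acceptable.

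The interpolated sketch, however, does not work as an argument, and the problem is not only the step you flag as ``delicate''. Your first paragraph reduces the claim to the assertion that $\mathrm{exp}(X)$ with $D_0X=B$ ``cannot have a weakly resonant monomial anywhere in its Taylor expansion'', and your last sentence concludes that ``$\psi$ carries no weakly resonant monomial''. That assertion is false. Take $\lambda_1=1$, $\lambda_2=-1$, $B=\mathrm{diag}(0,\pi i)$, let $X_0=\pi i\, y\,\partial/\partial y+x^2\,\partial/\partial x$ (already in normal form, so $\mathrm{exp}(X_0)(x,y)=(x/(1-x),-y)$ has only strongly resonant monomials), and conjugate by $h(x,y)=(x+y^2,y)$, whose quadratic part $y^2e_1$ is weakly resonant. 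Then $X=h^{*}X_0$ satisfies $D_0X=B$ and $\mathrm{exp}(X)(x,y)=\bigl((x+y^2)/(1-x-y^2)-y^2,\,-y\bigr)=(x+x^2+2xy^2+y^4+\cdots,\,-y)$, which contains the weakly resonant monomials $2xy^2e_1$ and $y^4e_1$. (Its \emph{first} nonlinear jet is $x^2e_1$, strongly resonant, so there is no conflict with the proposition.) This shows two things: first, the hypothesis that $f_k$ is the \emph{lowest} nonlinear term of $\psi$ is essential and must be used, whereas your ``weight bookkeeping'' never invokes $j^{k-1}\psi=D_0\psi$ and aims at a degree-independent conclusion that cannot hold; second, your claim that weakly resonant monomials arising in $h^{-1}\Phi h$ ``would have to cancel against weakly resonant monomials already present in $\Phi$'' is refuted by the example, where $\Phi$ has none and $h^{-1}\Phi h$ nonetheless acquires some. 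The statement that the conjugacy cannot create a weakly resonant monomial \emph{in the lowest nonlinear degree} is not a final technical step but the entire content of \cite[Proposition 1.5]{ribon-jde-2012}, so as a proof the sketch is circular. Either keep only the citation (as the paper does) or rework the sketch so that it tracks just the first nonlinear jet and exploits the vanishing of the intermediate jets.
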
 
We can return to the problem of determining non-complete classes. 
The results regarding completeness of analytic classes ${\mathcal C}$
depend on the topology that we consider for the infinitely dimensional space ${\mathcal C}$.
First, we see that in some sense being connected by a geodesic is a dense property.
\begin{proof}[Proof of Proposition \ref{pro:natural}]
Let $\psi$ be a local diffeomorphism conjugating $\Gamma$ and $\Gamma'$.
Up to a linear change of coordinates we may assume that the linear part $D_0 \psi$
of $\psi$ at the origin is in Jordan normal form, in particular its matrix is of the form
   \begin{equation*}
    \left(
    \begin{array}{cc}
    u & w \\
    0 & v
    \end{array}
    \right)
  \end{equation*}
where  $u,v \in\mathbb{C}^{\star}$. Consider the family
$\sigma_{\epsilon} (x,y) = (e^{\epsilon a} x, e^{\epsilon b} y)$ for some
$a,b \in {\mathbb C}$ that are linearly independent over ${\mathbb Q}$.
The map $\sigma_\epsilon$ converges to $Id$ when $\epsilon \to 0$.
Let us define the family $(\Gamma_{\epsilon}')$ by
$\Gamma_{\epsilon}' = (\sigma_{\epsilon} \circ \psi) (\Gamma)$.
The map $D_0 (\sigma_{\epsilon} \circ \psi)$ has eigenvalues
$u e^{\epsilon a}$ and $v e^{\epsilon b}$.

Denote $F_{p,q} (\epsilon) = u^{p} v^{q} e^{(ap + bq) \epsilon} - 1$ and
$T_{p,q} = F_{p,q}^{-1} (0)$ for $(p,q) \in {\mathbb Z} \times {\mathbb Z}$.
Resonances between the eigenvalues of $D_0 (\sigma_{\epsilon} \circ \psi)$ are obtained when
there exists $(p,q) \in {\mathbb Z} \times {\mathbb Z} \setminus \{ (0,0) \}$
such that $(u e^{\epsilon a})^{p} (v e^{\epsilon b})^{q} = 1$.
This equation is equivalent to
$\epsilon \in T_{p,q}$. Since $a p + b q \neq 0$ the function $F_{p,q}$ is not constant and
$T_{p,q}$ is a countable closed set for any
$(p,q) \in {\mathbb Z} \times {\mathbb Z} \setminus \{ (0,0) \}$.
We deduce that $T:=\cup_{(p,q) \in {\mathbb Z} \times {\mathbb Z} \setminus \{ (0,0) \}} T_{p,q}$
is countable and hence
there exists $\epsilon_{0} \in {\mathbb C}^{*}$ such that
$\{ t \epsilon_0 : t \in {\mathbb R}^{*} \} \cap T = \emptyset$. We define the
path $\gamma:[0,\infty) \to {\mathbb C}$ by $\gamma(t) = t \epsilon_0$.
The map 
$\sigma_{\epsilon} \circ \psi$ is embedded in the flow of a formal vector field for any
$\epsilon \not \in T$  by
Corollary \ref{cor:Poincare-linear}.   Therefore $\Gamma$ and $\Gamma_{\epsilon}'$ 
are connected by a geodesic for any $\epsilon \in \gamma (0, \infty)$
by Theorem \ref{teo:formal-complete}.
 \end{proof}

 Let us show that the analytic class ${\mathcal C}_0$ of the plane branch
 $\Gamma_0$ with Puiseux parametrization $(t^6, t^7 + t^{10} + t^{11})$ is
 non-complete.  First, we study the stabilizer group
 $\mathrm{Stab} (\Gamma_0) = \{ \psi \in \widehat{\mathrm{Diff}} ({\mathbb
   C}^{2},0) : \psi (\Gamma_0)=\Gamma_0\}$ of $\Gamma_0$.
\begin{lemma}
\label{lem:linear-stab}
The linear part at the origin of any element $\psi$ of
$\mathrm{Stab} (\Gamma_0)$ is the identity map.
\end{lemma}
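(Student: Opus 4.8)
The plan is to show that any formal diffeomorphism $\psi$ fixing $\Gamma_0$ must have linear part equal to the identity. I would argue in two stages: first that $D_0\psi$ must be diagonal (in the coordinates where $\Gamma_0$ has tangent cone $y=0$), and then that the two eigenvalues are both $1$.

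First I would normalize. Since $\Gamma_0$ has multiplicity $6$ and tangent cone $y=0$, any $\psi\in\mathrm{Stab}(\Gamma_0)$ must preserve this tangent cone (as $\psi(\Gamma_0)=\Gamma_0$ and the tangent cone is intrinsic). Hence $D_0\psi$ sends the line $y=0$ to itself, so in the basis $(x,y)$ its matrix is upper triangular, say $\begin{pmatrix} u & w\\ 0 & v\end{pmatrix}$ with $u,v\in\mathbb{C}^\star$. Writing the Puiseux parametrization $\varphi(t)=(t^6,\, t^7+t^{10}+t^{11})$, the condition $\psi(\Gamma_0)=\Gamma_0$ means there is a reparametrization $\rho(t)=ct+\cdots\in\widehat{\mathrm{Diff}}(\mathbb{C},0)$ with $\psi\circ\varphi = \varphi\circ\rho$. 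Looking at the first coordinate: the lowest-order term gives $u\,t^6 = c^6 t^6 + \cdots$, but I need to be careful because $w\neq 0$ would feed the $y$-part into the $x$-coordinate. The $y$-component of $\varphi$ starts at $t^7$, so $w(t^7+\cdots)$ contributes only at order $\geq 7$; thus from order $6$ we get $u = c^6$. From the second coordinate at order $7$: $v\,t^7 = c^7 t^7+\cdots$, so $v=c^7$. Then $u=c^6$, $v=c^7$.

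Next I would use the semigroup structure to pin down $c$. The key is that the exponents $7,10,11$ of $\Gamma_0$ are the ``characteristic'' ones not in the semigroup $S=\langle 6,7\rangle=\{0,6,7,12,13,14,\dots\}$ below the conductor $c_\Gamma=12$ (actually the relevant gaps are $10$ and $11$). Matching the coefficients of $\varphi\circ\rho$ against $\varphi$ order by order: the coefficient of $t^7$ forces (after dividing by $v=c^7$) that $c^7\cdot 1 = v\cdot 1$, consistent. But the coefficient of $t^{10}$ gives (modulo lower-order corrections from $\rho$, which land on exponents in the semigroup and can be absorbed) a relation of the form $v\cdot 1 = c^{10}$ after accounting that the $t^{10}$-term of $\varphi$ is genuine; combined with the $t^{11}$-relation $v = c^{11}$, subtracting gives $c^{10}=c^{11}$, hence $c=1$. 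This is the crux and I expect it to be the main obstacle: one must show that the lower-order terms of $\rho$ (i.e. the coefficients of $t^2,t^3,\dots$ in $\rho$) cannot be used to ``fix up'' the $t^{10}$ and $t^{11}$ coefficients independently — precisely because perturbing $\rho$ at those orders only affects exponents lying in the semigroup $S$ (by the remark that the semigroup exponents are exactly the ``removable'' ones), never the gap exponents $10,11$. Concretely, replacing $t$ by $\rho(t)=ct(1+r_1 t+r_2 t^2+\cdots)$ in $t^6$ and $t^7$ produces corrections to the coefficient of $t^{6+k}$ and $t^{7+k}$; the condition that the curve is unchanged forces these to match, and a careful bookkeeping shows $c^6=c^7$ already (from the $t^6$ vs. $t^7$ leading terms being both scaled by the same unit once we also use the $y$-coordinate), which alone gives $c=1$, hence $u=v=1$.

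Finally, with $u=v=1$ the matrix of $D_0\psi$ is $\begin{pmatrix}1 & w\\ 0 & 1\end{pmatrix}$; I would then kill $w$ by observing that a nonzero off-diagonal term $w$ would force the Puiseux parametrization of $\psi(\Gamma_0)$ to have a $t^7$-term in its \emph{first} coordinate after extracting the $6$th root to return to the standard form $(t^6,\dots)$ — but then matching with $\Gamma_0$, whose first coordinate is exactly $t^6$ with no $t^7$ term, and which is already prepared, forces $w=0$ by the preparation/uniqueness of the short parametrization (Corollary~\ref{cor:equivalence-under-composition-of-flows} and the discussion of canonical parametrizations). Alternatively and more cleanly: the term $w\,y\,\partial/\partial x$ corresponds to the contact exponent $(X,\Gamma_0)_{(0,0)}$ where $X$ has linear part $wy\,\partial/\partial x$, which by Theorem~\ref{the:contact-exponent-is-contact-minus-n} and the computation of $\Lambda$ equals $m=7$; but $7=m$ is a coefficient that \emph{cannot} be removed (it is the first nonzero exponent), so a $\psi$ with $w\neq 0$ would change the $t^7$-coefficient and could not fix $\Gamma_0$. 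Hence $w=0$ and $D_0\psi=\mathrm{Id}$.
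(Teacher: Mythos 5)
Your overall strategy (write $\psi\circ\varphi=\varphi\circ\rho$ with $\rho(t)=ct+c_2t^2+\cdots$ and match coefficients, exploiting the exponents $7,10,11$) is the same as the paper's, and the relations $u=c^6$, $v=c^7=c^{10}=c^{11}$ do in the end give $u=v=1$. However, the justification of the crucial middle step is wrong as stated. You claim that perturbing $\rho$ at low orders ``only affects exponents lying in the semigroup $S$, never the gap exponents $10,11$''. This is backwards: the coefficient $c_{k+1}$ of $t^{k+1}$ in $\rho$ contributes $7c^6c_{k+1}$ to the coefficient of $t^{7+k}$ in $\rho(t)^7$, so $c_2,c_3,c_4,c_5$ feed precisely into the gap exponents $8,9,10,11$. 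In particular the coefficient of $t^{10}$ in the second coordinate of $\varphi\circ\rho$ is $c^{10}+7c^6c_4$, not $c^{10}$, and your relation $v=c^{10}$ is unjustified. The repair --- which is what the paper's proof amounts to --- is to use the exponents $8$ and $9$ \emph{first}: since every parametrization of $\Gamma_0$ of the form $(t^6,\sum\tilde a_it^i)$ has $\tilde a_8=\tilde a_9=0$ (they differ from the given one only by $t\mapsto\xi t$ with $\xi^6=1$), matching at $t^8$ and $t^9$ in the second coordinate forces $c_2=c_3=0$, whence $w=6c^5c_2=0$; matching the first coordinate ($\rho(t)^6=ut^6+O(t^{12})$) at $t^9,t^{10}$ then kills $c_4,c_5$, and only after $\rho(t)=ct+O(t^6)$ is established do the clean relations $v=c^{10}=c^{11}$, hence $c=1$, follow. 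Note that this forces you to prove $w=0$ \emph{before}, not after, the $t^{10}/t^{11}$ step, since $w\neq 0$ is equivalent to $c_2\neq 0$. (Your parenthetical claim that ``$c^6=c^7$ already, which alone gives $c=1$'' is also false: the diagonal map $(x,y)\mapsto(c^6x,c^7y)$ always preserves the two leading monomials; it is only the pair $t^{10},t^{11}$ that rigidifies the stabilizer.)

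Your ``cleaner alternative'' for killing $w$ contains a further error: the contact exponent of $y\,\partial/\partial x$ with $\Gamma_0$ is not $m=7$ but $8$, since its dual form is $y\,dy$ with $\upsilon_{\Gamma_0}(y\,dy)=\operatorname{ord}_t(t^7\cdot 7t^6)+1=14$, and $14-6=8$ by Theorem \ref{the:contact-exponent-is-contact-minus-n}. The deformation in this direction first moves the coefficient of $t^8$ --- this is exactly the term $-\tfrac{7}{6}bd\,a^{-7/3}t^8$ appearing in the paper's computation. The conclusion $w=0$ still holds, because $8$ is a gap of $\langle 6,7\rangle$ and the $t^8$-coefficient of $\Gamma_0$ is $0$ in every normalized parametrization, but not for the reason you give.
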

\begin{proof}
  The linear part $D_0 \psi$ is a map of the form
  $(x,y) \mapsto (ax + by, cx+ dy)$. Since $D_0 \psi$ preserves the tangent cone
  of $\Gamma_0$, we deduce $c=0$.  In particular $ad \neq 0$ because $\psi$ is a
  formal diffeomorphism.  We have
\begin{equation*}
   \psi (  t^6, t^7 + t^{10} + t^{11}) \equiv (a t^6 + b t^7 + b t^{10} + b t^{11} + O(t^{12}),
   d t^7 +d  t^{10} +d  t^{11}+ O(t^{12})).
\end{equation*}
Consider a formal power series $\sigma (t)$ such that
$\sigma (t)^{6} \equiv (x \circ \psi)(t^6, t^7 + t^{10} + t^{11})$.  It must
admit the expression
$\sigma (t) \equiv a^{1/6} t + (b/6) a^{-5/6} t^2 + O(t^{3})$.  Moreover, it is
a formal diffeomorphism in one variable and its inverse $\sigma^{-1}$ satisfies
$\sigma^{-1} (t) \equiv a^{-1/6} t - (b/6)a^{-4/3} t^2 + O(t^{3})$.  A simple
calculation leads us to
\begin{equation*}
   \psi ( t^6, t^7 + t^{10} + t^{11}) \circ \sigma^{-1} (t) \equiv \left( t^6,
   d a^{-7/6} t^7 - \frac{7}{6} b d a^{-7/3} t^{8} + O(t^{9}) \right).
\end{equation*}
Since $\psi$ belongs to $\mathrm{Stab} (\Gamma_0)$,  $bd  a^{-7/3}$ vanishes.
We deduce $b=0$ as a consequence of $ad \neq 0$.
Thus the formal diffeomorphisms $\sigma$ and $\sigma^{-1}$
are of the form $t \mapsto a^{1/6} t + O(t^7)$ and $t \mapsto a^{-1/6} t + O(t^7)$
respectively. We obtain
\begin{equation*}
   \psi ( t^6, t^7 + t^{10} + t^{11}) \circ \sigma^{-1} (t) \equiv (t^6,
 a^{-7/6}  d t^7 + a^{-10/6} d  t^{10} + a^{-11/6} d  t^{11}+ O(t^{12})).
\end{equation*}
Since $\psi (\Gamma_0)=\Gamma_0$ there exists $\xi \in {\mathbb C}$ such that
$\xi^{6} =1$ and $ a^{-7/6}  d = \xi^{7}$, $ a^{-10/6}  d = \xi^{10}$ and
$ a^{-11/6}  d = \xi^{11}$.
We get $a^{1/6} = \xi^{-1}$ by dividing the last two equations and then
$a  = (a^{1/6})^{6} = \xi^{-6} = 1$.
By plugging $a^{-1/6} = \xi$ into $ a^{-7/6}  d = \xi^{7}$, we get
$d=1$. Hence $D_0 \psi$ is the identity map.
\end{proof}
\begin{proposition}
\label{pro:stabx}
Let $X$ be a formal vector field that preserves $\Gamma_0$.
Then $X$ has vanishing second jet.
\end{proposition}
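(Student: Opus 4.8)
The plan is to establish the two halves of the claim separately: first that the linear part $D_0 X$ vanishes (which is an immediate consequence of Lemma~\ref{lem:linear-stab}), and then that the homogeneous quadratic part of $X$ vanishes, which I would extract from the tangency condition of $X$ with $\Gamma_0$ by comparing the coefficients of $t^{12},\dots,t^{17}$ in a single power series identity.

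\emph{Vanishing of the linear part.} Since $\Gamma_0$ is singular at the origin, $X$ is necessarily a singular vector field (the germ of an invariant curve of a non-singular vector field would be smooth), so $\mathrm{exp}(sX)$ is a well-defined element of $\widehat{\mathrm{Diff}}({\mathbb C}^2,0)$ for every $s\in{\mathbb C}$, and it preserves $\Gamma_0$, i.e. $\mathrm{exp}(sX)\in\mathrm{Stab}(\Gamma_0)$. By Lemma~\ref{lem:linear-stab}, $D_0\mathrm{exp}(sX)=\mathrm{Id}$ for all $s$; since $D_0\mathrm{exp}(sX)=\mathrm{exp}(s\,D_0X)$, differentiating at $s=0$ gives $D_0X=0$. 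Hence, writing $X=A(x,y)\pt{x}+B(x,y)\pt{y}$, both $A$ and $B$ lie in ${\mathfrak m}^2$.

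\emph{Vanishing of the quadratic part.} Write $\varphi(t)=(x(t),y(t))=(t^6,\ t^7+t^{10}+t^{11})$. Tangency of $X$ to $\Gamma_0$ means there is a formal series $\eta(t)$ with $X(\varphi(t))=\varphi'(t)\,\eta(t)$; eliminating $\eta$ (using $x'(t)=6t^5$ and $y'(t)=7t^6+10t^9+11t^{10}$) yields the identity
\[
  6\,B(\varphi(t)) \equiv (7t+10t^4+11t^5)\,A(\varphi(t)).
\]
Now decompose $A=\sum_{k\geq 2}A_k$ and $B=\sum_{k\geq 2}B_k$ into homogeneous components. Because $x(t)$ has order $6$ and $y(t)$ has order $7$, each $A_k(\varphi(t))$ and $B_k(\varphi(t))$ with $k\geq 3$ has order at least $18$, so modulo $t^{18}$ only the quadratic parts survive: $A(\varphi(t))\equiv A_2(\varphi(t))$ and $B(\varphi(t))\equiv B_2(\varphi(t))$. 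Writing $A_2=\alpha_{20}x^2+\alpha_{11}xy+\alpha_{02}y^2$ and similarly for $B_2$, and using $x^2=t^{12}$, $xy=t^{13}+t^{16}+t^{17}$, $y^2=t^{14}+2t^{17}+O(t^{18})$, I would expand both sides of the displayed identity and equate the coefficients of $t^{12},\dots,t^{17}$. The coefficients of $t^{12}$ and $t^{15}$ give $\beta_{20}=0$ and $\alpha_{02}=0$; those of $t^{13}$ and $t^{16}$ give $6\beta_{11}=7\alpha_{20}$ and $6\beta_{11}=10\alpha_{20}$, hence $\alpha_{20}=\beta_{11}=0$; finally those of $t^{14}$ and $t^{17}$ give $6\beta_{02}=7\alpha_{11}$ and $12\beta_{02}=17\alpha_{11}$, forcing $\alpha_{11}=\beta_{02}=0$. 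Thus $A_2=B_2=0$, which together with $D_0X=0$ is exactly the vanishing of the second jet of $X$.

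\emph{Main obstacle.} There is no conceptual difficulty; the only point requiring care is the bookkeeping of the tail $t^{10}+t^{11}$ of the parametrization. It is precisely the monomials coming from $x\cdot t^{10}$ and $x\cdot t^{11}$ in $xy$ (and the $2t^{17}$ in $y^2$) that produce the extra relations at orders $16$ and $17$ and pin down $\alpha_{20}$ and $\alpha_{11}$; for the branch $(t^6,t^7)$ the linear vector field $6x\pt{x}+7y\pt{y}$ would already be tangent and the statement would fail. Hence the expansion genuinely has to be carried to order $17$ and the tail contributions tracked correctly — this is the only delicate step.
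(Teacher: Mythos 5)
Your proof is correct and follows essentially the same route as the paper: the linear part is killed via Lemma~\ref{lem:linear-stab} applied to the one-parameter group, and the quadratic part via the vanishing of $\varphi^{\ast}(-B\,dx+A\,dy)$ up to order $17$ (your identity $6B(\varphi(t))=(7t+10t^4+11t^5)A(\varphi(t))$ is the paper's pulled-back dual form divided by $t^5$, and your six coefficient relations are exactly the paper's linear system). The only cosmetic difference is that you solve the system step by step where the paper observes that its matrix is regular.
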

\begin{proof}
Since $X$ preserves $\Gamma_0$, it follows that 
the time $s$ flow  $\mathrm{exp} (s X)$ of $X$ preserves
$\Gamma_0$ for any $s \in {\mathbb C}$.  All the formal diffeomorphisms
$\mathrm{exp} (s X)$ in the one-parameter group of $X$ have identity linear part at
the origin by Lemma \ref{lem:linear-stab}.  In particular $X$ has vanishing linear part.
We write
\begin{equation*}
X = \left(  \sum_{i+j \geq 2} a_{ij} x^{i} y^{j} \right) \frac{\partial}{\partial x} +
\left(\sum_{i+j \geq 2} b_{ij} x^{i} y^{j} \right) \frac{\partial}{\partial y} .
\end{equation*}
Consider the dual form
\[
  \omega = - \left(\sum_{i+j \geq 2} b_{ij} x^{i} y^{j} \right) dx + \left(
    \sum_{i+j \geq 2} a_{ij} x^{i} y^{j} \right) dy .
\]
Since $X$ preserves
$\Gamma_0$, it follows that $(t^6, t^7 + t^{10} + t^{11})^{*} \omega \equiv 0$.
We have
\[ \nu_{\Gamma_0} (x^{2} dx) = 18, \  \nu_{\Gamma_0} (x y dx) = 19, \
 \nu_{\Gamma_0} (y^{2} dx) = 20, \]
\[ \nu_{\Gamma_0} (x^{2} dy) = 19, \ \nu_{\Gamma} (x y dy) = 20, \ \nu_{\Gamma} (y^{2} dy) = 21 \]
and $\nu_{\Gamma} (x^{i} y^{j} dx) \geq 24 \leq \nu_{\Gamma} (x^{i} y^{j} dy)$
for $i + j \geq 3$. Since
$(t^6, t^7 + t^{10} + t^{11})^{*} \omega \equiv 0$, we deduce $b_{20}=0$. We get
\[ - 6 b_{11} t^{11} (t^{7} + t^{10} + t^{11}) - 6 b_{02} t^{5} (t^{7} + t^{10} + t^{11})^{2} +  \]
\[ a_{20} t^{12} (7 t^{6} + 10 t^{9} + 11t^{10}) + a_{11} t^{6} (t^{7} + t^{10} + t^{11})
(7 t^{6} + 10 t^{9} + 11t^{10}) \]
\[ + a_{02} (t^{7} + t^{10} + t^{11})^{2}
(7 t^{6} + 10 t^{9} + 11t^{10}) + O(t^{23}) =0 . \]
We write the linear system of equations satisfied by the coefficients of
$t^{18}$, $t^{19}$, $t^{20}$, $t^{21}$ y $t^{22}$:
\[
\begin{array}{ccccccccccc}
- 6 b_{11} &  & & + & 7 a_{20} &   & & & & = & 0 \\
 & & -6 b_{02} &  & & + & 7 a_{11} & & & = & 0 \\
 &  & &  & &   & & + & 7 a_{02} & = & 0 \\
- 6 b_{11} &  & & + & 10 a_{20} &   & &  &  & = & 0 \\
- 6 b_{11} & - & 12 b_{02} & + & 11 a_{20} & +  & 17 a_{11} &  & & = & 0. \\
\end{array}
\]
The matrix of the system is regular, hence $b_{11}=b_{02}=a_{20}=a_{11}=a_{02}=0$.
In particular $X$ has a vanishing second jet.
\end{proof}
\begin{proposition}
\label{pro:2stab}
Let $\psi \in \mathrm{Stab}(\Gamma_0)$. Then $\psi$ and the identity map have the same
second jet.
\end{proposition}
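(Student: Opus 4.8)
The plan is to reduce the statement to Proposition~\ref{pro:stabx} by passing to the infinitesimal generator of $\psi$. By Lemma~\ref{lem:linear-stab} the linear part $D_0\psi$ is the identity, so $\psi$ is a unipotent formal diffeomorphism, and by Remark~\ref{rem:nil-to-unip} there is a unique formal vector field $X$ with vanishing linear part (hence of multiplicity at least $2$) such that $\psi=\mathrm{exp}(X)$. I claim it is enough to show that $\Gamma_0$ is invariant by $X$: granting this, Proposition~\ref{pro:stabx} gives that $X$ has vanishing second jet, so $X$ has multiplicity at least $3$; then $X^k(x)$ and $X^k(y)$ lie in $\mathfrak{m}^{3}$ for every $k\ge 1$, so both entries of $\mathrm{exp}(X)(x,y)-(x,y)$ lie in $\mathfrak{m}^{3}$, i.e.\ $\psi$ and the identity have the same second jet.

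Thus the point is to prove $\mathrm{exp}(sX)(\Gamma_0)=\Gamma_0$ for all $s\in\mathbb{C}$, and here one must work jet by jet, since $\mathrm{exp}(sX)$ is only a formal object. Because $X$ has multiplicity at least $2$, for each monomial only finitely many of the terms $s^kX^k/k!$ contribute to its coefficient in $\mathrm{exp}(sX)$, so the coefficients of $\mathrm{exp}(sX)$ with respect to $x,y$ are polynomials in $s$. Composing $\mathrm{exp}(sX)$ with the parametrization $\varphi_0(t)=(t^6,t^7+t^{10}+t^{11})$ of $\Gamma_0$ produces $(\gamma_1(s,t),\gamma_2(s,t))$ whose coefficient of each power of $t$ is a polynomial in $s$, with $\gamma_1(s,t)$ of $t$-order $6$ and leading coefficient $1$ (independent of $s$, since $D_0\,\mathrm{exp}(sX)=\mathrm{Id}$). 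Extracting the sixth root of $\gamma_1$ and reparametrizing, exactly as in the proof of Proposition~\ref{pro:curve-and-puiseux-deformation-and-field} (Lemma~\ref{lem:root-of-puiseux-series} applies with $s$ in the role of $z$, and polynomiality in $s$ is preserved), yields an irreducible Puiseux parametrization
\begin{equation*}
  \Gamma_s:=\mathrm{exp}(sX)(\Gamma_0)\equiv\varphi_s(t)=\Big(t^6,\ \sum_{i\ge 7}\tilde a_i(s)\,t^i\Big),
\end{equation*}
where each $\tilde a_i$ is a polynomial in $s$ and $\tilde a_i(0)=a_i$, the coefficients of $\varphi_0$ (so $a_7=a_{10}=a_{11}=1$ and $a_i=0$ otherwise).

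Next I would exploit the iterates of $\psi$. Since $\mathrm{Stab}(\Gamma_0)$ is a group, for every $n\in\mathbb{Z}$ we have $\Gamma_n=\psi^n(\Gamma_0)=\Gamma_0$. As two irreducible Puiseux parametrizations of $\Gamma_0$ with first coordinate $t^6$ differ (by the standard argument) by a change of parameter $t\mapsto\xi_n t$ with $\xi_n^6=1$, it follows that $\tilde a_i(n)=\xi_n^{\,i}\,a_i$ for all $i$, with one and the same $\xi_n$. In particular $\tilde a_7(n)=\xi_n^{\,7}=\xi_n$, so $|\tilde a_7(n)|=1$ for every $n\in\mathbb{Z}$; a polynomial bounded on $\mathbb{Z}$ is constant, hence $\tilde a_7\equiv\tilde a_7(0)=1$ and therefore $\xi_n=1$ for all $n$. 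Consequently $\tilde a_i(n)=a_i$ for every $i$ and every $n\in\mathbb{Z}$, so each polynomial $\tilde a_i-a_i$ vanishes at infinitely many points and is identically zero. Thus $\varphi_s=\varphi_0$ for all $s$, i.e.\ $X$ preserves $\Gamma_0$, and the first paragraph completes the proof.

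The main obstacle is precisely this passage from ``$\psi=\mathrm{exp}(X)$ preserves $\Gamma_0$'' to ``$X$ preserves $\Gamma_0$'': invariance under the cyclic group $\langle\psi\rangle$ has to be promoted to invariance under the whole one-parameter group, and this is exactly what the polynomial dependence in $s$ of the jets of $\mathrm{exp}(sX)$, evaluated at all integers, delivers. Everything else — Lemma~\ref{lem:linear-stab}, the description of when two prepared parametrizations define the same branch, and Proposition~\ref{pro:stabx} — is already available.
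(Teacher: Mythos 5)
Your proof is correct, and its overall strategy coincides with the paper's: reduce to showing that the infinitesimal generator $X$ of $\psi$ preserves $\Gamma_0$, then invoke Proposition~\ref{pro:stabx}, the key point in both cases being that the relevant data depend polynomially on the flow parameter $s$ and are pinned down at every integer $s$ because $\mathrm{exp}(nX)=\psi^n\in\mathrm{Stab}(\Gamma_0)$. The only difference is in how the invariance is encoded: the paper takes an irreducible equation $f$ of $\Gamma_0$ and observes that $G(s,t)=f\circ\mathrm{exp}(sX)\circ(t^6,t^7+t^{10}+t^{11})$ lies in ${\mathbb C}[s][[t]]$ and vanishes identically in $t$ for every $s\in{\mathbb Z}$, which immediately forces $G\equiv 0$; you instead track the normalized Puiseux coefficients $\tilde a_i(s)$, which at integer $s$ are determined only up to the reparametrization $t\mapsto\xi_n t$ with $\xi_n^6=1$, and you must (and correctly do) kill this ambiguity via the boundedness of the polynomial $\tilde a_7$ on ${\mathbb Z}$. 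The implicit-equation route is slightly cleaner precisely because it avoids that ambiguity, but your argument is complete.
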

\begin{proof}
The linear part of $\psi$ is the identity map by Lemma \ref{lem:linear-stab}.
Thus $\psi$ is of the form $\mathrm{exp} (X)$ for some unique formal nilpotent vector field $X$
(in fact $X$ has vanishing linear part) by Remark \ref{rem:nil-to-unip}.
Let $f =0$ be an irreducible equation of $\Gamma_0$.
Notice that $f \circ \mathrm{exp} (s X) = \sum_{j=0}^{\infty} \frac{s^j}{j!} X^j (f)$ 
by Taylor's formula and that 
$X^j (f) \in {\mathfrak m}^{j+1}$ for any $j \geq 1$. 
Therefore $f \circ \mathrm{exp} (s X)$ belongs to ${\mathbb C}[s][[x,y]]$ and then 
\begin{equation*}
G(s,t):= f \circ \mathrm{exp} (s X) \circ (t^6, t^7 + t^{10} + t^{11})
\end{equation*}
belongs to ${\mathbb C}[s][[t]]$. Moreover $G(s,t)$ vanishes for $s \in {\mathbb Z}$ since
$\{ \mathrm{exp} (s X) : s \in {\mathbb Z} \}$ is the cyclic group $\langle \psi \rangle$
and $\langle \psi \rangle$ is contained in $\mathrm{Stab}(\Gamma_0)$.
Since the coefficients of $t^j$ of $G(s,t)$ are polynomials that vanish at ${\mathbb Z}$,
we deduce $G \equiv 0$. In particular the elements of the one-parameter group generated by
$X$ preserve $\Gamma_0$ and hence $X$ preserves $\Gamma_0$.
By Proposition \ref{pro:stabx} the vector field $X$ has vanishing second jet and hence
$j^{2} \psi \equiv Id$.
\end{proof}
We just completed the first step of the proof of Proposition
\ref{pro:noncomplete}.  Now we want to construct $2$-jets of diffeomorphisms
such that any local diffeomorphism with such a $2$-jet is not embedded in the
flow of a formal vector field.
\begin{lemma}
\label{lem:nonembedded2}
Let $\psi \in \widehat{\mathrm{Diff}} ({\mathbb C}^{2},0)$ such that its second jet is equal to
$(x,y) \mapsto (x + x^{2} + y^{2}, -y)$. Then $\psi$ is not embedded in the flow of a formal vector
field.
\end{lemma}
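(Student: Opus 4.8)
The plan is to obtain a contradiction from Proposition \ref{pro:nonembedded} applied to the quadratic jet of $\psi$, after checking that this quadratic jet must contain a weakly resonant monomial no matter which logarithm of the linear part is chosen.

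First I would observe that $D_0\psi$ is the diagonal map $(x,y)\mapsto(\lambda_1 x,\lambda_2 y)$ with $\lambda_1=1$, $\lambda_2=-1$, so $\psi$ is already written in the form demanded by Proposition \ref{pro:nonembedded}, with $k=2$ and quadratic part $f_2=(x^2+y^2,0)=(x^2+y^2)e_1$. Arguing by contradiction, suppose $\psi=\mathrm{exp}(\hat X)$ for a formal singular vector field $\hat X$ and put $B=D_0\hat X$, so that $\mathrm{exp}(B)=D_0\psi$. Since $D_0\psi$ has two distinct eigenvalues, any such $B$ commutes with it and hence is diagonal in the coordinates $(x,y)$; write $B=\mathrm{diag}(\mu_1,\mu_2)$ with $e^{\mu_1}=1$ and $e^{\mu_2}=-1$, that is $\mu_1=2\pi i\,a$ and $\mu_2=(2b+1)\pi i$ for some $a,b\in\mathbb{Z}$.

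Next I would do the finite resonance bookkeeping. Both monomials $x^2e_1$ and $y^2e_1$ are resonant for $\psi$, because $\lambda_1^2\lambda_2^0=\lambda_1=\lambda_1^0\lambda_2^2$. The resonance $x^2e_1$ is \emph{strong} relative to $B$ exactly when $2\mu_1=\mu_1$, i.e. $\mu_1=0$ (equivalently $a=0$); and $y^2e_1$ is strong exactly when $2\mu_2=\mu_1$, i.e. $a=2b+1$ is odd. As $0$ is even, these conditions are incompatible, so for every admissible $B$ at least one of $x^2e_1$, $y^2e_1$ is a \emph{weakly} resonant monomial of $\psi$, and it occurs in $f_2$ with coefficient $1\neq0$. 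Proposition \ref{pro:nonembedded} then gives that $\psi$ is not embedded in the flow of any formal vector field whose linear part equals $B$; in particular not of $\hat X$, a contradiction. Hence $\psi$ is not embedded in the flow of any formal vector field.

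The crux of the argument is exactly this last dichotomy: one has to make sure that the strongness of $x^2e_1$ and that of $y^2e_1$ exclude one another for \emph{every} $B$ with $\mathrm{exp}(B)=D_0\psi$ — which is why both of these quadratic monomials have to be present in $f_2$, a single one of them being insufficient. The remaining points — that $B$ is forced to be diagonal (using $\lambda_1\neq\lambda_2$, as in the discussion preceding Proposition \ref{pro:nonembedded}) and that $f_2$ is genuinely homogeneous of degree $2$ — are routine, so I expect no further obstacle.
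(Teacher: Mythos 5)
Your proposal is correct and follows essentially the same route as the paper: assume $\psi=\mathrm{exp}(\hat X)$, note that any logarithm $B$ of $D_0\psi=\mathrm{diag}(1,-1)$ is diagonal with $e^{\mu_1}=1$, $e^{\mu_2}=-1$, and observe that $x^2e_1$ and $y^2e_1$ cannot both be strong resonances (that would force $\mu_1=0$ and then $\mu_2=0$, contradicting $e^{\mu_2}=-1$), so Proposition \ref{pro:nonembedded} applies. Your explicit justification that $B$ must be diagonal and your parity reformulation are harmless elaborations of the paper's argument.
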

\begin{proof}
Assume by contradiction that $\psi$ is of the form $\mathrm{exp}(X)$ for some formal nilpotent
vector field. The eigenvalues of the linear part of $X$ at the origin are $\alpha$ and $\beta$
with $e^{\alpha}=1$ and $e^{\beta}=-1$. We claim that for any choice of $\alpha$ and $\beta$
at least one of the resonances $x^{2} e_1$ or $y^{2} e_1$ is weak. Otherwise we obtain
\begin{equation*}
2 \alpha - \alpha =0 \ \mathrm{y} \ 2 \beta - \alpha=0 \implies \alpha=\beta=0 \implies e^{\beta}=1
\end{equation*}
and since $e^{\beta}=-1$ this is a contradiction. Hence, the formal
diffeomorphism $\psi$ is not embedded in a formal flow by Proposition
\ref{pro:nonembedded}.
\end{proof}
 \begin{lemma}
\label{lem:nonembedded3}
Let $\psi \in \widehat{\mathrm{Diff}} ({\mathbb C}^{2},0)$ be such that its
second jet is equal to
$(x,y) \mapsto (e^{2 \pi i/3} x + y^{2}, e^{4 \pi i/3} y + x^{2})$.  Then $\psi$
is not embedded in the flow of a formal vector field.
\end{lemma}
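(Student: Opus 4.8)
The plan is to run exactly the argument of Lemma~\ref{lem:nonembedded2}, this time via the weak/strong resonance dichotomy applied to the two off-diagonal quadratic terms of $\psi$. The linear part $D_0\psi$ is already in diagonal form, $D_0\psi(x,y)=(\lambda_1 x,\lambda_2 y)$ with $\lambda_1=e^{2\pi i/3}$ and $\lambda_2=e^{4\pi i/3}$, and by hypothesis $j^2\psi = D_0\psi + f_2$ with $f_2(x,y)=(y^2,x^2)$, both components homogeneous of degree $2$. So the formal-side hypotheses of Proposition~\ref{pro:nonembedded} hold; the work is to check that, for every admissible logarithm $B$ (a linear map with $\exp(B)=D_0\psi$), the polynomial $f_2$ contains a non-vanishing monomial that is weakly resonant relative to $B$.

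First I would record the eigenvalue arithmetic: since $\lambda_1^3=\lambda_2^3=1$ one has $\lambda_1\lambda_2=1$, $\lambda_1^2=\lambda_2$ and $\lambda_2^2=\lambda_1$. Hence the monomial $y^2 e_1$ is resonant (because $\lambda_2^2=\lambda_1$) and the monomial $x^2 e_2$ is resonant (because $\lambda_1^2=\lambda_2$); both occur with coefficient $1$ in $f_2$.

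Next, as $\lambda_1\neq\lambda_2$, any $B$ with $\exp(B)=D_0\psi$ is of the form $B=\mathrm{diag}(\alpha,\beta)$ with $e^\alpha=\lambda_1$, $e^\beta=\lambda_2$. The resonance $y^2 e_1$ is \emph{strong} exactly when $2\beta=\alpha$, and $x^2 e_2$ is \emph{strong} exactly when $2\alpha=\beta$. If both were strong we would obtain $2\beta=\alpha$ and $2\alpha=\beta$, whence $4\beta=\beta$, so $\beta=0$ and then $\alpha=0$, contradicting $e^\alpha=e^{2\pi i/3}\neq 1$. Therefore, for every choice of $B$, at least one of $y^2 e_1$, $x^2 e_2$ is a weak resonance, so $f_2$ contains a non-vanishing weakly resonant monomial. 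Proposition~\ref{pro:nonembedded} then yields that $\psi$ is not embedded in the flow of any formal vector field $X$ with $D_0X=B$. Since any formal $X$ with $\exp(X)=\psi$ satisfies $\exp(D_0X)=D_0\psi$, i.e.\ $D_0X$ is one such $B$, it follows that $\psi$ is embedded in no formal flow at all, which is the claim.

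I do not expect a genuine obstacle: the one point needing care is the quantification over the branch of the logarithm, i.e.\ observing that the two candidate quadratic resonances $y^2 e_1$ and $x^2 e_2$ can never be simultaneously strong, so that Proposition~\ref{pro:nonembedded} is applicable no matter which $B$ one takes. Everything else --- that $f_2$ is indeed the homogeneous degree-$2$ part and that the two monomials are resonant --- is read off directly from the stated second jet and the identities $\lambda_1^2=\lambda_2$, $\lambda_2^2=\lambda_1$.
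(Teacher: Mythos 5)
Your proposal is correct and follows essentially the same route as the paper's proof: identify $y^2e_1$ and $x^2e_2$ as the resonant quadratic monomials, observe that $2\beta=\alpha$ and $2\alpha=\beta$ cannot hold simultaneously for logarithms of $e^{2\pi i/3}$ and $e^{4\pi i/3}$, so one of the two resonances is always weak, and conclude via Proposition \ref{pro:nonembedded}. Your write-up is somewhat more explicit than the paper's about why $B$ must be diagonal and about the quantification over branches of the logarithm, but the argument is the same.
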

\begin{proof}
Assume by contradiction that $\psi$ is of the form $\mathrm{exp}(X)$ for some formal nilpotent
vector field. The eigenvalues of the linear part of $X$ at the origin are $\alpha$ and $\beta$
with $e^{\alpha}=e^{2 \pi i/3}$ and $e^{\beta}=e^{4 \pi i/3}$. We claim that for any choice of
$\alpha$ and $\beta$
at least one of the resonances $x^{2} e_2$ o $y^{2} e_1$ is weak. Otherwise we have
$2 \alpha = \beta$ and $2 \beta = \alpha$. 
This implies $\alpha=\beta=0$, contradicting $e^{\alpha}=e^{2 \pi i/3}$.
Therefore the formal diffeomorphism $\psi$ is not embedded
in a formal flow by Proposition \ref{pro:nonembedded}.
\end{proof}

\begin{proof}[Proof of Proposition \ref{pro:noncomplete}]
Consider the diffeomorphism
\begin{equation*}
\psi(x,y)=  (x + x^{2} + y^{2}, -y) \ \mathrm{or} \ \psi(x,y) = (e^{2 \pi i/3} x + y^{2}, e^{4 \pi i/3} y + x^{2})
\end{equation*}
and the curve $\Gamma = \psi (\Gamma_0)$. Any formal diffeomorphism
$\sigma$ conjugating $\Gamma_0$ and $\Gamma$ is of the form
$\psi \circ \rho$ where $\rho \in \mathrm{Stab} (\Gamma_0)$.
Since $j^2 \rho \equiv Id$ by  Proposition \ref{pro:2stab}, we deduce
$j^2 (\sigma \circ \rho) \equiv j^{2} \psi$ and hence
$\sigma \circ \rho$ is not embedded in the flow of a formal vector field for any
$\rho \in \mathrm{Stab} (\Gamma_0)$ by Lemmas \ref{lem:nonembedded2} and 
\ref{lem:nonembedded3}. Therefore the analytic class ${\mathcal C}_0$
is non-complete.
\end{proof}
\begin{proof}[Proof of Proposition \ref{pro:open-krull}]
Consider the subset $T$ of ${\mathrm{Diff}} ({\mathbb C}^{2},0)$
of diffeomorphisms whose second jet is equal to $(x + x^{2} + y^{2}, -y)$
(instead we could choose $(e^{2 \pi i/3} x + y^{2}, e^{4 \pi i/3} y + x^{2})$ too).
The set $T$ is open in the Krull topology.
Moreover, since $\mathrm{Stab} (\Gamma_0)$ consists of formal diffeomorphisms with
trivial second jet, it follows that $T$ is a union of left cosets of
$\mathrm{Diff}({\mathbb C}^{2},0)/ \mathrm{Stab} (\Gamma_0)$.
As a consequence its projection $\tilde{T}$ in
$\mathrm{Diff}({\mathbb C}^{2},0)/ \mathrm{Stab} (\Gamma_0) \sim {\mathcal C}_0$
is an open set in the induced quotient topology.
Every plane branch $\Gamma$ in $\tilde{T}$ is of the form $\sigma (\Gamma_0)$ where
$\sigma \in \mathrm{Diff}({\mathbb C}^{2},0)$ satisfies
$j^2 \sigma \equiv j^2 (x + x^{2} + y^{2}, -y)$.
Therefore $\Gamma_0$ is not connected to $\Gamma$ by a geodesic by the proof of
Proposition \ref{pro:noncomplete}.  We just obtained an open subset $\tilde{T}$ of
${\mathcal C}_0$ whose elements are not connected to $\Gamma_0$ by a geodesic.
\end{proof}

 \begin{remark}
Notice that in the examples in the proof of Proposition \ref{pro:noncomplete} the curves
$\Gamma_0$ and $\Gamma$ have the same tangent cone.
\end{remark}

\begin{remark}
  Let us focus in the case where $\psi(x,y)= (x + x^{2} + y^{2}, -y)$.
  Zariski's $\lambda$ invariant of $\Gamma_0$ is $\lambda=10$.  Let $\Gamma$ be
  the curve of parametrization
\begin{equation*}
\psi( t^6, t^7 + t^{10} + t^{11}) = (t^{6} + O(t^{12}), -t^{7} - t^{10} - t^{11} ).
\end{equation*}
Up to a change of parameter $t \mapsto ut$ with $u^{6}=1$, the curve $\Gamma$ is
of the form
\begin{equation*}
\left( t^{6} + O(t^{12}), -\frac{t^{7}}{u^{7}} - \frac{t^{10}}{u^{10}} - \frac{t^{11}}{u^{11}} \right)
\end{equation*}
and the coefficient of $t^7$ is equal to $1$ if and only if $u=-1$.
Then
\begin{equation*}
(t^{6} + O(t^{12}), t^{7} - t^{10} + t^{11} )
\end{equation*}
parametrises $\Gamma$. Every  parametrization
of $\Gamma$ of the form $(t^6, t^{7}+c t^{10} + O(t^{11}))$ satisfies $c=-1$. Thus the
curves $\Gamma_0$ and $\Gamma$ are not connected by a geodesic but
have the same tangent cone and their parametrizations
coincide up to (but not including) the term corresponding to Zariski's $\lambda$ invariant.
\end{remark}


\begin{thebibliography}{10}

\bibitem{brunella2015birational}
M.~Brunella.
\newblock {\em Birational Geometry of Foliations}.
\newblock IMPA Monographs. Springer International Publishing, 2015.

\bibitem{CanoF3}
F.~Cano.
\newblock Desingularizations of plane vector fields.
\newblock {\em Trans. Amer. Math. Soc.}, 296(1):83--93, 1986.

\bibitem{CanoF6}
F.~Cano.
\newblock Final forms for a three-dimensional vector field under blowing-up.
\newblock {\em Ann. Inst. Fourier (Grenoble)}, 37(2):151--193, 1987.

\bibitem{Cano-Cerveau2}
F.~Cano and D.~Cerveau.
\newblock Desingularization of nondicritical holomorphic foliations and
  existence of separatrices.
\newblock {\em Acta Math.}, 169(1-2):1--103, 1992.

\bibitem{Casas}
Eduardo Casas-Alvero.
\newblock {\em Singularities of Plane Curves}.
\newblock Number 276 in London Math. Soc. Lecture Notes Series. Cambridge Univ.
  Press, 2000.

\bibitem{Ecalle}
Jean {\'E}calle.
\newblock Th\'eorie it\'erative: introduction \`a la th\'eorie des invariants
  holomorphes.
\newblock {\em J. Math. Pures Appl. (9)}, 54:183--258, 1975.

\bibitem{PFA-moduli}
P.~Fortuny~Ayuso.
\newblock Vector flows and the analytic moduli of plane singular branches.
\newblock arXiv:1704.05265 [math.AG], 2017.

\bibitem{Genzmer-2016}
Yohann Genzmer.
\newblock Dimension of the moduli space of a curve in the complex plane.
\newblock arXiv:1610.05998, 2016.

\bibitem{greuel2007introduction}
G.M. Greuel, C.~Lossen, and E.I. Shustin.
\newblock {\em Introduction to Singularities and Deformations}.
\newblock Springer Monographs in Mathematics. Springer Berlin Heidelberg, 2007.

\bibitem{Hefez-Hernandes-classification}
A.~Hefez and M.E. Hernandes.
\newblock The analytic classification of plane branches.
\newblock {\em Bull. London Math. Soc.}, 43:289--298, 2011.

\bibitem{ilyashenko-yakovenko-lectures}
Yulij Ilyashenko and Sergei Yakovenko.
\newblock {\em Lectures on analytic differential equations}.
\newblock American Mathematical Soc., 2008.

\bibitem{Martinet-Ramis2}
J.~Martinet and J.-P. Ramis.
\newblock Classification analytique des {\'e}quations diff{\'e}rentielles non
  lin{\'e}aires r{\'e}sonnantes du premier ordre.
\newblock {\em Ann. Scient. {\'E}c. Norm. Sup.}, 16:571--621, 1983.

\bibitem{Brochero-Cano-Lopez}
F.~E.~Brochero Mart{\'\i}nez, F.~Cano, and L.~L{\'o}pez-Hernanz.
\newblock Parabolic curves for diffeomorphisms in ℂ².
\newblock {\em Publicacions Matem{\`a}tiques}, 52(1):189--194, 2008.

\bibitem{ribon-jde-2012}
J.~Rib{\'o}n.
\newblock Embedding smooth and formal diffeomorphisms through the
  jordan--chevalley decomposition.
\newblock {\em J. of Differential Equations}, 253(12):3211--3231, 2012.

\bibitem{Seidenberg1}
A.~Seidenberg.
\newblock Reduction of singularities of the differential equation
  ${A}dy={B}dx$.
\newblock {\em Amer. J. Math.}, pages 248--269, 1968.

\bibitem{Wall}
C.T.C. Wall.
\newblock {\em Singular Points of Plane Curves}.
\newblock Cambridge Univ. Press, 2009.
\newblock doi: https://doi.org/10.1017/CBO9780511617560.

\bibitem{Zariski-1966}
O.~Zariski.
\newblock Characterization of plane algebroid curves whose module of
  differentials has maximum torsion.
\newblock {\em Proc. Natl. Acad. Sci. USA}, (56):781--786, 1966.

\bibitem{Zariski4}
O.~Zariski.
\newblock {\em Le probl{\`e}me des modules pour les branches planes}.
\newblock {\'E}cole Polytechnique, Paris, 1973.
\newblock R{\'e}dig{\'e} par Fran{\c{c}}ois Kmety et Michel Merle. English
  translation by Ben Lichtin: \emph{The Moduli Problem for Plane Branches}.
  Univ. Lect. Series, AMS 2006.

\bibitem{Zhang-jde-2011}
Xiang Zhang.
\newblock The embedding flows of $c^{\infty}$ hyperbolic diffeomorphisms.
\newblock {\em J. Differential Equations}, 250(5):2283--2298, 2011.

\end{thebibliography}

\end{document}